\newcommand{\la}{\langle}
\newcommand{\ra}{\rangle}
\newcommand{\CC}{{\mathbb C}}
\newcommand{\RR}{{\mathbb R}}
\newcommand{\NN}{{\mathbb N}}
\newcommand{\sgn}{\operatorname{sgn}}
\newcommand{\Res}{\operatorname{Res}}
\renewcommand{\Re}{\operatorname{Re}}
\renewcommand{\Im}{\operatorname{Im}}
\newcommand{\sech}{\operatorname{sech}}
\newcommand{\defeq}{\stackrel{\rm{def}}{=}}
\def\squarebox#1{\hbox to #1{\hfill\vbox to #1{\vfill}}}
\newcommand{\stopthm}{\hfill\hfill\vbox{\hrule\hbox{\vrule\squarebox
                 {.667em}\vrule}\hrule}\smallskip}
\newcommand{\ev}{\textnormal{ev}}
\newtheorem{theorem}{Theorem}
\newtheorem{definition}[theorem]{Definition}
\newtheorem{proposition}{Proposition}[section]
\newtheorem{lemma}[proposition]{Lemma}
\newtheorem{corollary}[proposition]{Corollary}
\theoremstyle{remark}
\newtheorem{remark}[proposition]{Remark}
\numberwithin{equation}{section}
\title[Breathing patterns in nonlinear relaxation]
{Breathing patterns in nonlinear relaxation}
\author[J. Holmer]
{Justin Holmer}
\email{holmer@math.brown.edu}
\address{Department of Mathematics, Brown University\\
151 Thayer Street, Providence, RI 02912, USA}
\author[M. Zworski]
{Maciej Zworski}
\email{zworski@math.berkeley.edu}
\address{Mathematics Department, University of California \\
Evans Hall, Berkeley, CA 94720, USA}
\begin{document}

\begin{abstract} In numerical experiments involving nonlinear solitary 
waves propagating
through nonhomogeneous media one observes ``breathing'' in the sense
of the amplitude of the wave going up and down on a much faster scale 
than the motion of the wave 
-- see Fig. \ref{f:fastslow1} below. 
In this paper we investigate
this phenomenon in the simplest case of stationary waves in which 
the evolution corresponds to relaxation to a nonlinear ground state.
The particular model is the popular $ \delta_0 $ impurity in the 
cubic nonlinear Schr\"odinger equation on the line.
We give asymptotics of the amplitude on a finite but relevant time interval
and show their remarkable agreement with numerical experiments, see Fig. \ref{f:1}.
 We stress
the nonlinear origin of the ``breathing patterns'' caused by
the selection of the ground state depending on the initial data, and by 
the non-normality of the linearized operator.
\end{abstract}

\maketitle


\section{Introduction}
\label{int}

We study a simple model of relaxation to a nonlinear ground state.
Our equation is the one dimensional nonlinear cubic 
Schr\"odinger equation with a small delta potential:
\begin{equation}
\label{eq:nls}
i\partial_t u + \tfrac{1}{2}\partial_x^2 u + q\delta_0(x)u +u|u|^2 = 0 \,,
\end{equation}
where $ 0  < |q| \ll 1 $. The nonlinear ground state minimizes the 
corresponding energy \eqref{eq:GPH}
for a prescribed $ L^2 $ norm, and is explicitly given by 
\begin{equation}
\label{E:gs}
v_\lambda ( x ) = \lambda \sech ( \lambda | x | 
+ \tanh^{-1}( q / \lambda ) ) \,, \ \
\| v_\lambda \|_2^2 = 2 ( \lambda - q ) \,, \ \ \lambda > |q| \,. 
\end{equation}
A simple rescaling allows the reduction to the case $ \lambda = 1 $ and we obtain
\begin{theorem}
\label{T:main}
Suppose that $ u ( x, t  ) $ solves \eqref{eq:nls}, 
$ u( x , 0 ) \in H^1 ( \RR ) $ is real and even, 
and that 
\begin{equation}
\label{eq:t00}
 \| x^k \partial^\ell_x  w_0 \|_{L^\infty ( ( 0 , \infty ) ) } 
\leq C_{kl} |q| \,, \ \ 
w_0 ( x ) \defeq u ( x , 0 ) - v_1 ( x ) \,, \ \ 
k \,, \ell \in \NN \,.   \end{equation}
Then for 
$ \lambda = 1 +  \int_\RR  w_0 ( x ) v_1( x) dx  $ and 
$ 0 \leq t \ll  |q|^{-1/2} $, 
we have 
\begin{equation}
\label{eq:t0} 
\|  u (  x, t  ) - e^{ it\lambda^2 /2 } \Big( v_\lambda ( x ) 
+ w ( \lambda x, \lambda^2 t ) \Big) \|_{H_x^1} 
\leq C |q|^{3/2}  + C t^2 q^2 \,, 
\end{equation}
where $ w ( x , t ) $ is given  explicitly in \eqref{eq:lw0q}. In particular,
for $ 1 \ll t \leq C |q|^{-2/7} $, 
\begin{equation}
\label{eq:t3}
u (  0, t  ) = e^{ it \lambda^2/2 } \left( \lambda  - 
\sqrt {\frac{ 2 }{ \pi t} } 
e^{ i (  \lambda^2 t /2 +  \pi/4 ) } \int_\RR w_0( x ) dx  \right) 
+ {\mathcal O} \left( \frac  q {t^{3/2} } \right)   
 \,. \end{equation}
\end{theorem}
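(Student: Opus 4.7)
\emph{Proof proposal.} The plan is to make the modulation ansatz
\[
u(x,t) = e^{it\lambda^2/2}\bigl(v_\lambda(x) + \widetilde w(x,t)\bigr),
\]
derive a linearized equation for $\widetilde w$, rescale to the $\lambda = 1$ case, solve the linearized equation by a continuous-spectrum expansion, and finally estimate the nonlinear remainder. The prescribed value $\lambda = 1 + \int_\RR w_0 v_1\,dx$ is forced (to leading order) by $L^2$ mass conservation: from \eqref{E:gs}, matching $2(\lambda-q) = \|u\|_2^2 = \|v_1 + w_0\|_2^2 = 2(1-q) + 2\int_\RR w_0 v_1\,dx + O(q^2)$ produces this $\lambda$ up to $O(q^2)$, and this choice effectively removes the generalized null-space component of $\widetilde w$ carried by $\partial_\lambda v_\lambda$.

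Substituting the ansatz into \eqref{eq:nls} and using the stationary equation satisfied by $v_\lambda$ yields
\[
i\widetilde w_t + \tfrac12 \widetilde w_{xx} - \tfrac{\lambda^2}{2}\widetilde w + q\delta_0 \widetilde w + 2v_\lambda^2 \widetilde w + v_\lambda^2 \overline{\widetilde w} = N(\widetilde w),
\]
where $N(\widetilde w)$ collects the quadratic and cubic nonlinear terms. Splitting $\widetilde w = a + ib$ with $a,b$ real produces the non-normal linear system $\partial_t a = L_- b$, $\partial_t b = -L_+ a$, where $L_+ = -\tfrac12\partial_x^2 + \tfrac{\lambda^2}{2} - q\delta_0 - 3v_\lambda^2$ and $L_- = -\tfrac12\partial_x^2 + \tfrac{\lambda^2}{2} - q\delta_0 - v_\lambda^2$. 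The rescaling $x \mapsto \lambda x$, $t \mapsto \lambda^2 t$ then reduces us to the $\lambda = 1$ case with $q$ replaced by $q/\lambda = q + O(q^2)$, the correction again absorbable in \eqref{eq:t0}.

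For small $q$ the operators $L_\pm$ are norm-resolvent perturbations of the linearization about the unperturbed $\sech$ soliton, whose generalized eigenfunctions are explicit; this furnishes the distorted Fourier representation \eqref{eq:lw0q} of $w$, the null-space projection of $w_0$ being zero by the choice of $\lambda$ above. For \eqref{eq:t3} with $x = 0$ and $t \gg 1$, stationary phase at the critical point $k = 0$ of the dispersion $k \mapsto \tfrac12 k^2 + \tfrac12$ produces $(2\pi/t)^{1/2}e^{-i\pi/4}\widehat w_0(0)$ together with the oscillation $e^{i t/2}$; restoring the rescaling and overall phase $e^{it\lambda^2/2}$, and accounting for the contributions of both components of the $2\times 2$ linearized system at $x = 0$, one arrives at the coefficient $\sqrt{2/(\pi t)}\,e^{i(\lambda^2 t/2 + \pi/4)}\int w_0\,dx$, while the next stationary-phase term produces the $O(q/t^{3/2})$ remainder.

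The main obstacle will be controlling the nonlinear error $N(\widetilde w)$ and the slow drift of the modulation parameter over the relevant time interval. Assumption \eqref{eq:t00} gives Schwartz-type control on $w_0$ of size $|q|$, hence $\|\widetilde w(t)\|_{H^1} = O(|q|)$ from the linear evolution on any compact interval; the quadratic piece $v_\lambda\widetilde w^2$ in $N$ then produces via Duhamel an $O(tq^2)$ correction, and iterating once more (or equivalently tracking the second-order drift of the modulation parameter) yields the $O(t^2 q^2)$ bound in \eqref{eq:t0}. The $O(|q|^{3/2})$ term reflects the initial approximation from the leading-order choice of $\lambda$ together with low-regularity effects of the jump in $\partial_x u$ at $x = 0$. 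Strichartz and $H^1$ energy estimates for the rank-one perturbation $\tfrac12\partial_x^2 + q\delta_0$ of the free Schr\"odinger operator supply the required linear-flow bounds.
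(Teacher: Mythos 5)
Your overall architecture (modulation ansatz, rescaling to $\lambda=1$, distorted Fourier representation of the linearized flow, stationary phase at $k=0$) matches the paper's, but the central technical step is missing and the substitute you propose would fail. The linearized operator here is the non-normal matrix operator $-i\mathcal{L}_q$ (equivalently $\mathcal F_q$ of \eqref{eq:Fql}), not the self-adjoint $\tfrac12\partial_x^2+q\delta_0$, so Strichartz/energy estimates for the rank-one perturbation of the free Schr\"odinger operator give you nothing: $e^{-it\mathcal{L}_q}$ is not unitary, its generalized kernel produces secular growth ($\mathcal{L}_q v_4 = iv_3$ gives a mode growing linearly in $t$), and for $q>0$ there is a genuine unstable real eigenvalue $\sim q^{1/2}$ of $\mathcal F_q$. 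The paper controls the linear flow by the conservation of $\langle \mathcal{L}_q w_1, w_1\rangle$ together with the Weinstein-type coercivity estimate (Lemma \ref{L:spec_lower_bd}), which is valid only on the symplectic orthogonal complement of the generalized kernel and only for \emph{even} data (evenness is what kills the odd unstable/oscillatory modes $w_q^\pm$ of Lemma \ref{l:B}); the nonlinear remainder is then closed by a bootstrap that separately tracks $\omega(\tilde w,v_3)=O(th^2)$ and $\omega(\tilde w,v_4)=O(t^2h^2)$. This is precisely where the $t^2q^2$ term and the restriction $t\ll |q|^{-1/2}$ come from; a naive Duhamel iteration as you describe cannot produce them because you have no a priori uniform $H^1$ bound on the linear propagator to iterate against.

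Two further concrete gaps. First, the $O(|q|^{3/2})$ term in \eqref{eq:t0} is not a ``low-regularity jump effect'': it is the projection of $w_0$ onto the near-threshold bound states of $H_q$ that exist for $q<0$ (Lemma \ref{l:reig}), whose eigenfunctions have amplitude $|q|^{1/2}$ and spatial extent $|q|^{-1}$, so that pairing with $w_0=O(q)$ gives $|q|^{1/2}\int|w_0|e^{-|qx|}dx=O(|q|^{3/2})$. Second, the stationary-phase step for \eqref{eq:t3} is not the textbook computation you invoke: the scattering coefficients of $H_q$ are singular at $k=0$ on the scale $|k|\sim|q|$ (the pole of $1/B$ at $k=iq$), and the leading coefficient $\sqrt{2/\pi t}\,e^{i\pi/4}\int w_0$ only emerges after showing that the relevant integrand contains the factor $k^2/(k^2+q^2)$, whose integral against $e^{itk^2/2}$ equals $\sqrt{\pi/2t}\,e^{i\pi/4}+O(q)$; this competition between the scales $k\sim q$ and $k\sim t^{-1/2}$, together with balancing $q^2t^2$ against $q/t^{3/2}$, is what produces the range $t\le C|q|^{-2/7}$, which your argument does not explain.
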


\begin{figure}
\begin{center}
\includegraphics[width=6in]{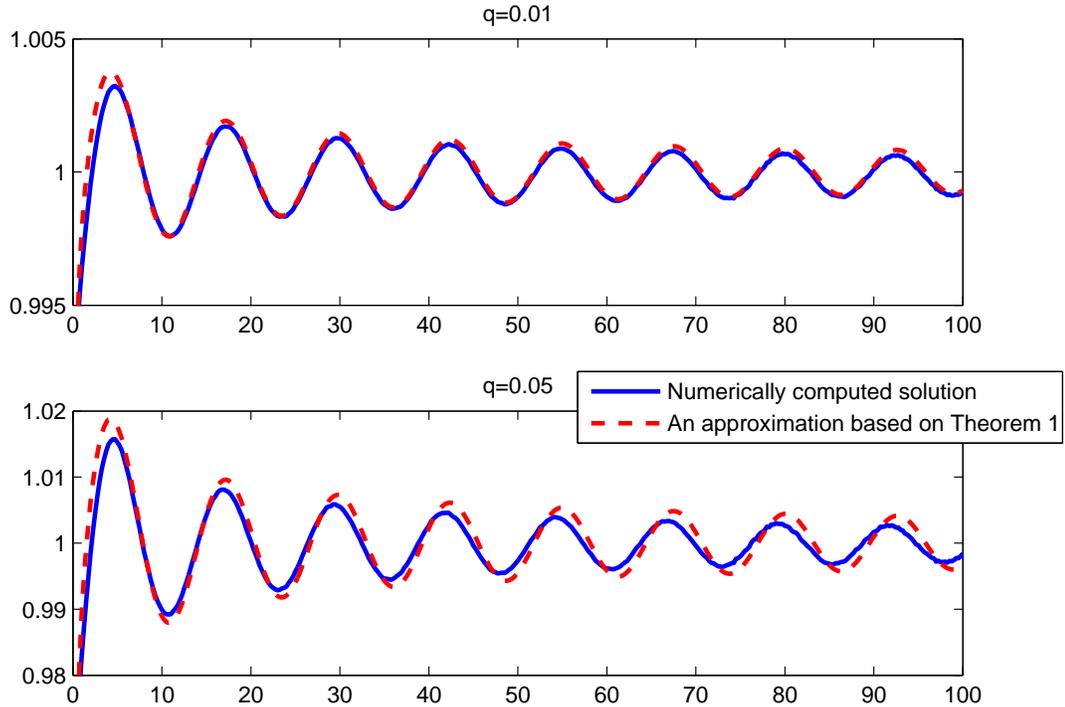}
\end{center}
\caption{Breathing patterns for $ u( x , 0 ) = \sech(x/(1+q))/(1+q) $ (the initial
data is rescaled so that the ground state to which it relaxes is $ v_1 ( x ) $): 
the plots show $ |u ( 0 , t )| $ and the asymptotic 
prediction \eqref{eq:t3} given in Theorem \ref{T:main}, 
for $ q = 0.05 $ and $ q = 0.01 $. The agreement is
remarkably good for times much longer than given in the theoretical 
result.
}
\label{f:1}
\end{figure}

The conditions on $ w_0 $ in \eqref{eq:t00} can be weakened considerably,
and in particular we only need estimates for $ k \,, \ell \leq N $ for
some $ N$. Theorem \ref{T:npt} below gives a statement which depends only on 
$ \| w_0 \|_{H^1} $ being small and the more explicit results in Theorem 
\ref{T:main} come from our close analysis of the propagator $ \exp ( - i t 
{\mathcal L}_{q,\lambda} ) $ appearing in \eqref{eq:t1}. As explained
below our motivation comes from the study of solitons and the 
intial data in which we are most interested is $ u  ( x , 0 ) = \sech x $.
For that case, the comparison of the theorem with numerical results is shown in 
Fig.\ref{f:1}.

The reduction to the case $\lambda=1$, mentioned before the statement
of Theorem \ref{T:main} is straightforward:
let $\tilde u(x,t) = \lambda^{-1} u(\lambda^{-1} x, \lambda^{-2} t)$.  
Then $\tilde u$ solves \eqref{eq:nls} with $q$ replaced by $\lambda^{-1} q$, 
$i\partial_t \tilde u + \tfrac{1}{2}\partial_x^2 \tilde u 
+ q\lambda^{-1} \delta_0(x)\tilde u +\tilde u|\tilde u|^2 = 0$.
Now, if we suppose the theorem holds when applied to $\tilde u$ replacing 
$u$ and $q/\lambda$ replacing $q$, then we can deduce the theorem in its 
current form.  Thus, it suffices to prove the $\lambda =1$ case.

In the remainder of the introduction we will discuss our motivation and 
relations to existing literature, possible approaches to obtaining 
finer asymptotics, and a simple example of a breathing pattern for nonnormal
operators.

\subsection{Motivation} 
Mathematical studies of relaxation to ground states for nonlinear Schr\"odinger 
equations have been recently conducted in a number of mathematical 
papers, see Soffer-Weinstein \cite{SoWe}, Tsai-Yau \cite{TsYa}, 
Gang-Sigal \cite{GaSi}, Gang-Weinstein \cite{GaWe}, and references given 
there. The particular focus is on the behaviour as $ t \rightarrow \infty $
(genuine relaxation in the sense of pure mathematics)
and the allowed non-linearities typically exclude standard examples from the 
physical literature. For the cubic nonlinear Schr\"odinger equation (NLS) 
on the line, that is for \eqref{eq:nls} with $ q = 0 $, the nonlinear
relaxation can be studied in great detail using methods of inverse 
scattering theory pioneered by Zakharov-Shabat \cite{ZS72} -- see
Deift-Its-Zhou \cite{DIZ},  Deift-Zhou \cite{DZ}, 
and \cite[Appendix B]{HMZ1} for recent advances
and references. The case of $ q \neq 0 $ with even initial data 
is also in principle 
accessible by these methods as was pointed out by Fokas \cite{Fo}.

\begin{figure}
\begin{center}
\includegraphics[width=6in]{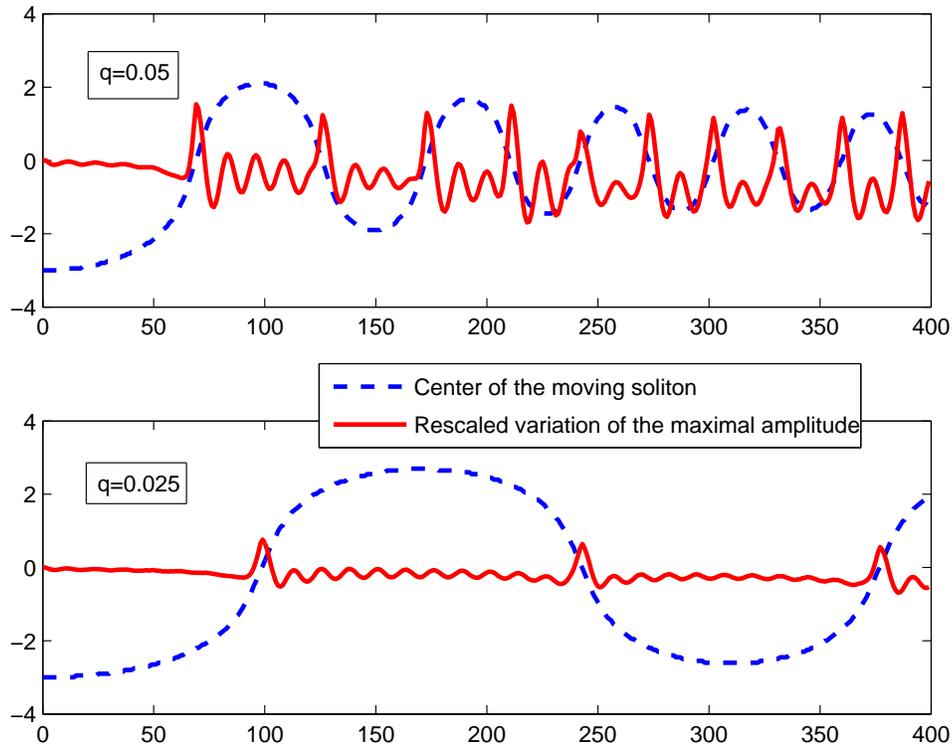}
\end{center}
\caption{Fast and slow oscillations in the motion of the soliton
with initial condition $ \sech ( x + 3 ) $ moving in the field
of $ - \delta_0 ( x )/20 $ (top figure) and $ - \delta_0 ( x ) / 40 $ (bottom 
figure). The center of the soliton oscillates
around $ x = 0 $ with a much larger period than the rescaled
amplitude $ A ( t ) = 30 ( |u ( a(t) , t ) | -1 ) $, where $ a ( t ) $ 
is the center of the moving of the moving soliton. The periods of the fast
oscillations are close.}
\label{f:fastslow1}
\end{figure}

The goals of this paper are more modest: we explain 
a phenomenological fact occuring on shorter time scales
for a simple physically relevant model of NLS with small $ \delta $ impurities. 
Numerous references for this model in the physics 
literature can for instance be found in 
\cite{BL} (where it is used to model more realistic narrow traps), 
\cite{CM},\cite{GHW}, and \cite{LFDKS}. See also \cite{Sac} for a recent
numerical study and further pointers to the literature.

Our motivation came from observing a common phenomenon illustrated in 
Fig.\ref{f:fastslow1}. In \cite{HZ1} we have shown that the solution of
\eqref{eq:nls} with $ u ( x , 0 ) = \sech( x -a_0) e^{i x v_0} $ 
satisfies 
\begin{equation}
\label{eqo:t1}
\| u ( t ,\bullet ) -  e^{i(\bullet -a(t)) v(t) }e^{i\gamma ( t ) } \sech
(\bullet -a ( t) ) \|_{H^1 ( \RR ) } \leq C |q|^{1-3 \delta} \,,
\end{equation}
for $ 0 < t < \delta (v_0^2+|q|)^{-1/2}\log(1/|q|) $, $ 0 < |q| \ll 1 $, 
and 
where $ a$, $ v$, and $ \gamma$ solve the following system
of equations
\begin{gather}
\label{eqo:t3}
\begin{gathered}
\frac{d}{dt} {  a} =   v \,, \ \
\frac{d}{dt} {  v} = \frac12 q\partial_x (\sech^2) (  a) \,,
\\
\frac{d}{dt} {  \gamma} = \frac12 + \frac {v^2} 2
+ q \sech^2 (   a ) + \frac12 q a \partial_x (\sech^2) (  a) \,,
\end{gathered}
\end{gather}
with initial data $(a_0,v_0,0)$ (please note that the sign convention for 
$ q $ has been changed here). As was pointed out there, as seen from 
explicit constants in coercive estimates, these asymptotics require
$ q \lesssim 0.01 $ to hold accurately. From the semiclassical point
of view $ q = h^2 $, where $ h$ is the {\em effective Planck constant}
of the problem, so that means $ h \lesssim 0.1 $ -- see \cite{HZ2} for 
an explanation of this scaling philosophy. 

In Fig.~\ref{f:fastslow1} the dashed line shows the motion of the
center of the soliton in the case of $ q = 0.05 $ (which is a borderline
case for the applicability of \eqref{eqo:t1}). We see oscillations
with the period proportional to $ q^{-1/2} $ in agreement with 
\eqref{eqo:t3}. The continuous line shows the oscillation of 
the amplitude: we look at the deviations of the value of the solution
at the maximum of $ |u ( x , t )| $ in $ x $ from $ 1 $, the maximal 
value of the absolute value of the soliton solution. The oscillations
are much faster than the oscillations of the center of the soliton
and the period is close to being fixed. Numerical observations
suggest that the period is almost independent of $ q $.

This ``breathing'' behaviour is even more striking in movies
of numerical solutions (see for instance the last movie in 
{\tt http://math.berkeley.edu/$\sim$zworski/msg.pdf}). The slowing
down of the soliton and the shedding of its mass seem
closely related to these breathing patterns.

As the first step to understand solitons moving in 
nonhomogeneous media we study the stationary case, that is \eqref{eq:nls}
with initial data given by $ u ( x , 0 ) = \sech x $. The results of 
\cite{HZ1} recalled in \eqref{eqo:t1} and \eqref{eqo:t3} show that
\[   \| u ( t ,\bullet ) -  e^{i\gamma ( t ) } \sech
(\bullet)  \|_{H^1 ( \RR ) } \leq C |q|^{1-3 \delta} \,, \]
for $ 0 < t < \delta |q|^{-1/2} \log ( 1/|q|) $, 
and $ \gamma ( t ) = ( 1/2 +q ) t $. In fact, an application of the 
method of \cite{HZ1} shows that for some $ \tilde \gamma ( t ) $, 
\[   \| u ( t ,\bullet ) -  e^{i\tilde \gamma ( t ) } \sech
(\bullet)  \|_{H^1 ( \RR ) } \leq C |q| \,, \]
for all times. Here we could replace $ \sech $ with $ v_\lambda $ for
any $ \lambda = 1 + {\mathcal O} ( q ) $.

Hence the breathing patterns must involve higher 
order asymptotics and since $ |q|= h^2 $, the  natural next
step is $ |q|^{3/2} = h^3 $. Theorem \ref{T:main} provides 
that next step on a time scale which allows seeing a large number
of oscillations. The numerical experiments show a very good agreement with 
asymptotics provided by \eqref{eq:t3} and suggest that they are valid
for times longer than $ t \ll |q|^{-1/2}$. 

Finer asymptotics might be possible if one adapts some of the methods
of \cite{SoWe}, \cite{GaSi}, and \cite{GaWe},
but it is not clear which direction should be taken for the efficient study of
moving solitons. We opted for the simplest at this early stage.

\subsection{Nonlinear aspects of  ``breathing''}
\label{nab}

We first compare the breathing patterns observed here with 
amplitude oscillations in the relaxation to the ground state of 
a linear problem, $ i u_t = -u_{xx}/2 - q \delta_0 u $, $ 0 < q \ll 1 $.
If the initial data is equal to $ u_0 ( x ) $ and is real and even,
a heuristic approximation for the solution is 
\[ u ( 0 , t ) \sim  e^{ it q^2/2 }  q  \int_\RR u_0 ( x ) e^{-q|x|} dx + 
\frac{ \hat u_0 ( 0 )} {\sqrt t } \,. \]
Although the zero resonance disappears (see \cite{S} and \S \ref{sa}), 
for $ q $ small we expect 
the behaviour $ 1/\sqrt t $ to  persist for long times -- see \S \ref{aabp}.

\begin{figure}
\begin{center}
\includegraphics[width=4in]{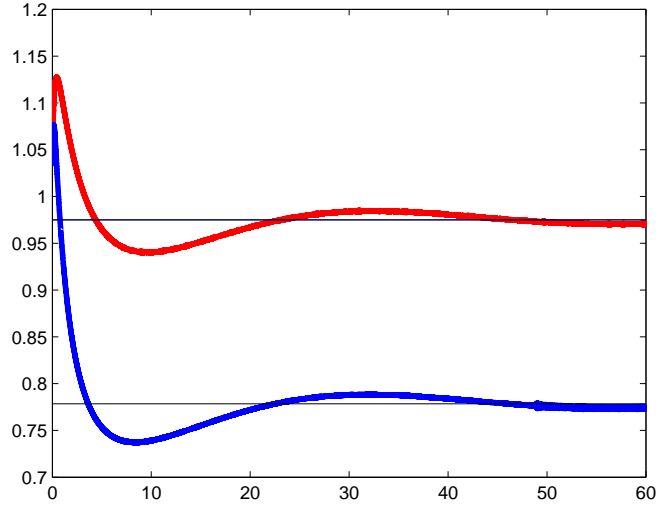}
\end{center}
\caption{Examples of linear relaxation: the initial condition is
$ u ( x , 0 ) = \sech x $ and the potentials are $ - q \delta_0 ( x)  $ 
with$ q = 1/2 $
for the {\color{red} top} graph and $ q = \sqrt 2 / 4 $ for the
{\color{blue} bottom} graph. We expect the periods of oscillations
to be $ 4 \pi/ q^2 $. Consequently,
changing $ q = 1/2 $ to $ q /\sqrt 2 = \sqrt 2 / 2 $ we expect the period
to double and to see that we plot $ {\color{red} | u ( 0 , t ) | }$
in the top graph and $ {\color{blue} | u ( 0 , t/2 ) | } $ in the bottom
graph: the agreement of the periods is striking.
The horizontal lines correspond to the asymptotic values
$ v_q ( 0 )  \int \sech x v_q ( x ) dx $, $ v_q = \sqrt{q} \exp ( - q|x|) \
$.
}
\label{f:pcool}
\end{figure}

This is very different from \eqref{eq:t3}. The main difference is
that in the nonlinear problem the eigenvalue is not fixed but
it is selected depending on the initial condition. The approximate
selection is given by the formula for $ \lambda $ in Theorem \ref{T:main}
and a more precise selection method is given in 
Proposition \ref{p:eigen-select} preceding Theorem \ref{T:npt}. 
In particular, the periods oscillation for a fixed initial condition are 
approximately independent of $ q $. Since the linear eigenvalue is 
fixed and depends on $ q $ this is strikingly different in linear relaxation
-- see Fig.\ref{f:pcool}.

The origin of the phase in the second term in \eqref{eq:t3} lies
in the properties of the non-normal linearized operator for \eqref{eq:nls},
and in particular in the coupling responsible for the nonnormality.
We do not yet have a fully conceptual explanation for that other
than the analysis of \eqref{eq:lw0q}. 

We present a simple example illustrating how non-normality can 
be responsible for ``breathing'', that is oscillations in the 
amplitute, absent for normal operators.
Suppose that $\alpha,\beta \in \mathbb{R}$ (note that we allow both 
positive and negative $\alpha$, $\beta$), $\vec w= [\Re w \; \Im w]^T$, 
\begin{equation}
\label{E:Rtoy}
R = \begin{bmatrix} 0 & -\beta-\partial_x^2 \\ \alpha+\partial_x^2 & 0 
\end{bmatrix}
\end{equation}
and we consider the evolution 
\begin{equation}
\label{E:Rtoy-evolve}
\partial_t \vec w = R \vec w
\end{equation}  
We consider $R$ as an operator on 
$H^2(\mathbb{R};\mathbb{C}) \times H^2(\mathbb{R};\mathbb{C})$ 
and write out explicit formulas for the complex-valued vector plane-wave 
solutions associated to (generalized) eigenvalues $\pm i\omega$ of $R$.  From 
these, we build real-valued plane-wave solutions $\vec w=[w_1 \; w_2]^T$ to 
the matrix equation \eqref{E:Rtoy-evolve}.  When these are converted to 
complex numbers as $w=w_1+iw_2$, we find that if $\alpha=\beta$, then $w$ is 
unimodular but if $\alpha\neq \beta$, then $w$ is not unimodular and we see 
oscillations in amplitude.

Let $\omega \geq 0$.  We seek complex-valued (vector) plane-wave solutions to 
\eqref{E:Rtoy-evolve} with generalized eigenvalue $\pm i\omega$.  Let 
$\gamma \geq \max(\sqrt{\max(\alpha,0)}, \sqrt{\max(\beta,0)})\geq 0$ be the 
unique solution to 
$$\omega^2=(\gamma^2-\alpha)(\gamma^2-\beta) \,.$$
Set 
$$\sigma = \sqrt{ \frac{\gamma^2-\alpha}{\gamma^2-\beta} } \,.$$
Now let
$$v(\gamma) = \begin{bmatrix} 1 \\ i\sigma \end{bmatrix} 
e^{i\gamma x}, \qquad \tilde v(\gamma) 
= \begin{bmatrix} 1 \\ -i\sigma \end{bmatrix} e^{i\gamma x} \,.$$
Then $v(\gamma)$, $v(-\gamma)$ are two plane-wave solutions to $Rv = i\omega v$
and $\tilde v(\gamma)$, $\tilde v(-\gamma)$ are two plane-wave solutions to 
$R\tilde v = -i\omega \tilde v$.  From this we see that
$$e^{-it\omega}v(\pm \gamma), \qquad e^{it\omega}\tilde v(\pm \gamma)$$
are four solutions to \eqref{E:Rtoy-evolve}.   Thus
$$ \vec w= \begin{bmatrix} \cos(t\omega + \gamma x) \\ 
\sigma \sin(t\omega + \gamma x) \end{bmatrix} 
= \tfrac12 e^{-it\omega}v(-\gamma) + \tfrac12 e^{it\omega}\tilde v(\gamma)$$
is a real solution.  Forming a complex number from this vector, we obtain
$$ \cos(t\omega + \gamma x) + i\sigma  \sin(t\omega + \gamma x)$$
which has constant-in-time modulus if and only if $\sigma =1$.  

The linearization of \eqref{eq:nls} around the solution $v_1$ gives the non-normal operator $\mathcal{F}_q$ defined below in \eqref{eq:Fql}.  The motivation for studying the operator $R$ above is that $\mathcal{F}_q$ has the form of $R$ with $\alpha=\beta=-1$ for $|x|$ large but with $\alpha=5$, $\beta=1$ for $|x|$ near $0$.

\subsection{Method of proof and organization of the paper}

The proof Theorem \ref{T:main} consists of two parts. 
The first is a 
nonlinear perturbation theory presented in Theorem \ref{T:npt} and
gives an approximation of the solution by the linearized flow.
The second part is 
the precise analysis of that linearized flow on time scales consistent
with the approximation given in Theorem \ref{T:npt}.

In \S \ref{pr} we present various standard facts about the 
nonlinear Schr\"odinger flow with an external delta function 
potential. The Hamiltonian structure of this flow with respect to 
the symplectic form $ \omega  ( u , v ) = \Im \int u \bar v $ is
particularly crucial. It plays an important r\^ole in \S \ref{npt}
where it is used to select the nonlinear eigenvalue of the limiting
``relaxed state''. The other component of the proof of Theorem \ref{T:npt},
which is the main result of that section, 
is the coercivity estimate allowing the control of $ H^1 $ norm 
by the linearized operator. The estimates on the propagator are
similar to 
the estimates in \S 5 of \cite{HZ2}:
instead of the $ L^2 $-energy method we estimate $ \partial_t \langle 
{\mathcal L}_q u , u \rangle $, where $ {\mathcal L}_q $ is essentially
the Hessian of the Hamiltonian (see \eqref{E:Lq}). The initial data
is assumed to be even which is particularly important in the case of
$ q < 0 $ (repulsive $\delta$ potential) as the ground state is then 
unstable -- see \cite{LFDKS}.

In \S \ref{fc} we recall Kaup's explicit spectral decomposition of 
the linearized operator for the focusing cubic NLS on the line. 
As shown in Appendix \ref{ndkp} that basis
can also be discovered via simple numerical experimentation. 
We use it to obtain a representation of the propagator and apply it
to see the relaxation to solitons in the free case. For initial 
data close to the solitons this crude approximation is remarkably 
close to the precise results given by full inverse spectral method
-- see Fig.\ref{f:2}.

The results of \S \ref{fc} lead to an almost explicit spectral decomposition
for the operator with $ 0 < |q| \ll 1 $ -- that is presented in \S \ref{sedp}
and Appendices \ref{S:pnz} and \ref{AA}. We follow the general theory 
of Buslaev-Perelman and Krieger-Schlag in that particular setting. More
general non-linearities could also be allowed but since we are ultimately
interested in the comparison with numerics the explicit nature of Kaup's
basis is very useful. As in \S \ref{fc} this leads to a representation of
the propagator, \eqref{eq:lw0q}, used in the statement of Theorem \ref{T:main}.
An asymptotic analysis of \S \ref{aabp}  gives the approximation \eqref{eq:t3} illustrated in Fig.\ref{f:1}.


\medskip

\noindent
{\sc Acknowledgments.}
We would like to thank Michael Weinstein and Galina Perelman for 
stimulating conversations and e-mail exchanges.
The work of the first author was supported in part by an NSF postdoctoral
fellowship, and that
of the second second author by the NSF grant DMS-0654436. 

\section{Preliminaries}
\label{pr}

In this section we review various basic aspects of the equation 
\eqref{eq:nls}.

\subsection{Hamiltonian structure} 
The nonlinear Schr\"odinger equation 
\eqref{eq:nls} describes the Hamiltonian flow on 
on $ H^1 ( \RR, \CC ) $ for the Hamiltonian 
\begin{equation}
\label{eq:GPH}
H_q ( v ) \defeq \frac14 \int ( |\partial_x v |^2 - |v|^4 ) dx
- \frac12 q | v ( 0 ) |^2 \,. \end{equation}
More precisely, we consider  
\[ V = H^1 ( \RR, \CC ) \simeq  H^1 ( \RR , \RR) \oplus H^1 ( \RR, \RR ) \,, \ \
u \simeq ( \Re u , \Im u ) \,, \] 
as a {\em real} Hilbert space with 
the inner product and the symplectic form given by 
\begin{equation}
\label{eq:omega}\
\langle u,v \rangle  \defeq \Re \int u\bar v \,, \  \ 
\omega(u,v) \defeq \langle i u , v \rangle = \Im \int u\bar v\,, 
\end{equation}
Let $ H_q $ given by \eqref{eq:GPH}, or be a more general function,
$H: V \to \mathbb{R}$. 
The associated Hamiltonian
vector field is a map $\Xi_H : V\to TV$, 
which means that for a particular point $u\in V$, we have 
$(\Xi_H)_u \in T_uV$. The vector field  $\Xi_H$ is defined by the relation
\begin{equation}
\label{eq:Hamvf} \omega(v , (\Xi_H)_u) = d_uH(v)
\,, \end{equation}
where $v\in T_uV$, and $d_uH:T_uV \to \mathbb{R}$ is defined by
$$d_uH(v) = \frac{d}{ds}\Big|_{s=0} H(u+sv) \,. $$
In the notation above
\begin{equation}
\label{eq:nats}  dH_u ( v ) = \langle dH_u , v \rangle \,, 
\ \  (\Xi_H)_u = \frac 1 i dH_u  \,. 
\end{equation}

For $ H = H_q $ given by \eqref{eq:GPH} we compute
\begin{align*}
d_uH(v) &= \Re \int ( (1/2) \partial_x u \partial_x \bar v - |u|^2u \bar v ) dx 
- \Re (q  u ( 0 ) \bar v  )\\
&= \Re \int ( - (1/2)\partial_x^2 u - |u|^2u - q \delta_0 ( x )  u )\bar v \,. 
\end{align*}
Thus, in view of \eqref{eq:nats} and \eqref{eq:Hamvf}, 
$$(\Xi_H)_u = \frac 1 i \left( - \frac12 \partial_x^2u - |u|^2u 
- q \delta_0 ( x ) u \right) $$
The flow associated to this vector field (Hamiltonian flow) is
\begin{equation}
\label{eq:Hflow}
\dot u = (\Xi_H)_u =  \frac 1 i  \left( - 
\frac12 \partial_x^2u - |u|^2u - q \delta_0 ( x ) u \right) 
\,.
\end{equation}

\subsection{Well posedness in $ H^1$}

The discussion here has been formal but it is well known that 
the equation \eqref{eq:nls} has global solutions in $ H^1 $ for 
more general nonlinearities, $ |u|^{p-1} u $, $ 1 < p < 5 $. 
For the reader's convenience we recall the standard argument.

We have the following basic estimates:
\begin{equation}
\label{eq:basic}  \| u \|_{L^\infty }^2 \leq C
\| u \|_{L^2} \| u' \|_{L^2}  \,, 
\end{equation}
(which follows from the fundamental theorem of calculus: $ u ( x ) ^2 
= \int_{-\infty}^x 2 u ( y ) u'( y ) dy $) and thus
\[ \frac1{p+1}\int | u|^{p+1}  \leq   \frac1{p+1}\| u \|_{L^\infty}^{{p-1} } \| u \|^2_{L^2}
\leq C \| u' \|_{L^2}^{\frac{p-1}2} \| u\|^{\frac{p+3}2 }_{L^2}
 \leq
 \frac1{16} \| u' \|_{L^2}^2  +  C' \| u \|_{L^2}^{\frac{2(p+3)}{5-p}} \,,\]
$$\frac{q}{2}|u(0)|^2 \leq \frac1{16}\|u'\|_{L^2}^2+Cq^2\|u\|_{L^2}^2 \,.$$
Hence, 
\begin{align*}
H_q(u) &= \frac14\|u'\|_{L^2}^2 - \frac1{p+1}\|u\|_{L^{p+1}}^{p+1}-\frac{q}{2}|u(0)|^2\\
&\geq \frac18\|u'\|_{L^2}^2 - C\|u\|_{L^2}^{\frac{2(p+3)}{5-p}} - Cq^2\|u\|_{L^2}^2
\end{align*}
and consequently, 
\[ \| u \|_{H^1}^2 \leq 8 H_q( u ) + C  \| u \|_{L^2}^{\frac{2(p+3)}{5-p}} 
+ (C q^2+1) \| u \|_{L^2}^2 \,.  \]
Since the energy, $ H_q(u) $, and mass, $ \| u \|_{L^2 }^2 $, are conserved,
we see that if the solution exists in $ H^1 $, its $H^1 $
norm is uniformly bounded. Thus we only need to show {\em local} existence
in $ H^1 $.
Let us fix $ T > 0 $ and, for $ u = u ( x , t ) $, define the norm
\[ \| u \|_X \defeq \sup_{ 0 \leq t \leq T } \| u ( \bullet , t ) \|_{H^1}
\,. \]
Solving \eqref{eq:nls}, $ u ( x , 0 ) = u_0 ( x ) $, 
is equivalent to finding the fixed point of the operator
\[  \Phi \; : \; u ( x, t ) \longmapsto e^{ i t (  \partial_x^2/2 + q \delta_0 ( x ) ) } u_0 ( x )
- \frac 1 i \int_0^t e^{ i ( t- s) (\partial^2_x / 2 + q \delta_0 ( x ) ) } 
( | u |^{p-1} u )  ( x , s ) ds \,. \]
Here the operator $ \exp ( i t  (\partial^2_x / 2 + q \delta_0 ( x ) ) ) $
is unitary on $ L^2 $ (see the discussion of the operator $ L $ given in
\eqref{eq:L} below) and preserves 
$$\tilde H_q(u) = \frac14\|u'\|_{L^2}^2 - \frac{q}{2}|u(0)|^2 \,.$$ 

Again using \eqref{eq:basic}, 
$$ \frac18\|u'\|_{L^2}^2 - Cq^2\|u\|_{L^2}^2 \leq \tilde H_q(u(t)) \leq \frac12\|u'\|_{L^2}^2 + Cq^2\|u\|_{L^2}^2 \,.$$
Therefore, if $u(t) =  \exp ( i t  (\partial^2_x / 2 + q \delta_0 ( x ) ) )u_0$, 
\begin{align*}
\frac18 \|u'(t)\|_{L^2}^2 &\leq H_q(u(t)) + Cq^2\|u(t)\|_{L^2}^2 \\
&= H_q(u_0) + Cq^2\|u_0\|_{L^2}^2 \\
&\leq \frac12\|u_0'\|_{L^2}^2 + Cq^2\|u_0\|_{L^2}^2
\end{align*}
From this, we see that 
\[ 
e^{ i t  (\partial^2_x / 2 + q \delta_0 ( x ) )  }
\; : \;  H^1 ( \RR ) \; \longrightarrow H^1 \; ( \RR ) \,, \]
is bounded with norm independent of $t$.
This and the estimate
\[ \begin{split}
 \| | u |^{p-1} u - | v|^{p-1}  \|_{H^1 } & \leq C ( \| |u|^{p-1} \|_{H^1} 
+ \| |v|^{p-1}  \|_{H^1 }  ) \| u - v \|_{H^1 } \\
& \leq C
( \| u \|_{H^1} + \| v \|_{H^1} )^{p-1}  \| u - v \|_{H^1} \,. 
\end{split} \]
give
\[  \| \Phi ( u ) - \Phi ( v ) \|_X  \leq  C T ( \| u \|_X + \| v \|_X)^{p\
-1}
( \| u - v \|_X ) \]
so that for $ T $ small fixed point arguments can be used to obtain
a solution in $ H^1 $. 

\subsection{Nonlinear ground states}
The minimizers with a prescribed $ L^2 $ norm are given by 
critical points of the Hamiltonian with the constraint added (and 
$ \lambda^2/4 $ playing the r\^ole of the Lagrange multiplier):
\begin{equation}
\label{eq:Eql}
  {\mathcal E}_{q,\lambda} ( u ) \defeq H_q ( u ) + \frac{\lambda^2}4 
\| u \|^2_{L^2} \,.\end{equation}
Then for the ground state given by \eqref{E:gs} we obtain
\[  {\mathcal E}_{q, \lambda}' ( v_\lambda ) = 0 \,, \ \ 
{\mathcal E}_{q, \lambda }'' ( v_\lambda ) = {\mathcal L}_{q}  \,, \]
where the Hessian is the following self-adjoint operator on 
$ H^1 ( \RR , \CC ) \simeq H^1 ( \RR, \RR ) \oplus H^1 ( \RR , \RR ) $:
\begin{equation}
\label{E:Lq}  {\mathcal L_q } \defeq \begin{bmatrix} L_{q+} & 0 \\
0 & L_{q-} \end{bmatrix} \,, 
\end{equation}
where
\[\begin{aligned}
& L_{q+} = \frac12(\lambda^2 -\partial_x^2 - 6v^2 -2q\delta_0 ) \,,  \\
& L_{q-} = \frac12 (\lambda^2 -\partial_x^2 - 2v^2 -2q\delta_0 )  \,.
\end{aligned}
\] 
In view of the $ \delta $ functions,
 the definition of the operators $ L_{q\pm} $ is given by choosing
the correct domain for the operator.
To see what it is
let us first examine the basic case of 
\begin{equation}
\label{eq:L}
{L}=-\partial_x^2 + V - q\delta_0\,, 
\end{equation} 
on $\mathbb{R}$, where $V$ is a 
smooth real-valued potential, rapidly decaying at $\infty$.  Suppose that 
$u\in L^2(\RR)$, ${L} u =f$, and $f\in L^2(\RR)$.  This implies that 
away from $0$ we have that $\partial_x^2 u \in L^2$, and thus 
$u\in H^2(\mathbb{R}\backslash\{0\})$.  In order that $f$ remain a function 
across $x=0$, we must have that $u(x)$ is continuous at $x=0$ and 
\begin{equation}
\label{E:jump1}
u'(0+)-u'(0-) = -qu(0)
\end{equation}
Thus a natural domain to consider for ${L}$ is
\begin{equation}
\label{E:DL}
\mathcal{D} = \{ \, u \, | \, u\in H^2(\RR\backslash \{0\})\, , 
u \text{ is continuous at }x=0 \text{ and \eqref{E:jump1} holds} \}
\end{equation}
By verifying that the operators $ L \pm i $ are both symmetric and surjective 
on $ {\mathcal D} $ we see that $ L $ is self-adjoint with domain 
$\mathcal{D}$.

\subsection{Linearization and the Hamiltonian map}
\label{S:lHm}

For $ {\mathcal E} : V \rightarrow \RR $ satisfying $ {\mathcal E}'(u) = 0 $
we can invariantly define the {\em Hamiltonian map}, 
\[ {\mathcal F}\; : \; T_u V \longrightarrow T_u V \,, \]
using the well defined Hessian of $ {\mathcal E} $ at $ u$:
\[ \langle {\mathcal E}'' ( u ) X , Y \rangle = \omega ( Y , {\mathcal F} X) \,.\]
In other words, 
the Hamiltonian map is the {\em linearization} of the Hamilton vector field
of $ {\mathcal E}$. See for instance 
\cite[Sect.21.5]{Hor2} for a general discussion, and 
\cite[Lemmas 2.1, 2.2]{HZ2} for relevant facts in our context.

For $ V = H^1 ( \RR , \CC ) $ with the symplectic form
 \eqref{eq:omega} we have 
\[ {\mathcal F} = -i {\mathcal E}'' \,, \]
and for $ {\mathcal E} $ given by \eqref{eq:Eql} we have 
\begin{equation}
\label{eq:Fql}  {\mathcal F}_q = -i {\mathcal L}_q = 
 \begin{bmatrix} 0 & L_{q-} \\
-  L_{q+} & 0 
 \end{bmatrix} \,. 
\end{equation}
The matrix representation is based on the identification 
\[ H^1 ( \RR , \CC ) \ni u \simeq [ \Re u , \Im u ]^t \in H^1 ( \RR, \RR)^2 \,,\]
It is also convenient to consider the equivalent matrix representation 
using the identification,
\[ H^1 ( \RR , \CC ) \ni u \simeq [ u , \bar u ]^t \in \Delta \subset 
H^1 ( \RR, \CC)^2 \,, \]
which gives
\begin{gather}
\label{eq:Hql}
\begin{gathered}
\frac12 H_q \defeq \frac 1 {i}U {\mathcal F}_q U^* \,, \ \ U \defeq \frac 1 {\sqrt{2}}
\begin{bmatrix} 1 & -i \\ 1 & \ i \end{bmatrix} \,, \\
H_0 =  \begin{bmatrix} - \partial^2_x + \lambda^2 & \ \ \ 0 \\
\ \ \ 0 & \partial_x^2 - \lambda^2 \end{bmatrix} + \sech^2 x
\begin{bmatrix} -4 & -2 \\
\ \ 2 &  \ \ 4\end{bmatrix} \,, \\
H_{q,\lambda} = H_q =  \begin{bmatrix} - \partial^2_x + \lambda^2 & \ \ \ 0 \\
\ \ \ 0 & \partial_x^2 - \lambda^2 \end{bmatrix} + v_{\lambda}^2 (x)
\begin{bmatrix} -4 & -2 \\
\ \ 2 &  \ \ 4\end{bmatrix} 
- 2 q \delta_0 
\begin{bmatrix} 1 & \ 0 \\
0 &  -1 \end{bmatrix} 
\end{gathered}
\end{gather}
(when there is no, or little, chance of confusion we supress $ \lambda $ in 
our notation; most of the time its value is taken to be $ 1$).
This representation is convenient when we study the 
spectral decomposition of $ F_q $. The factor $ \frac 12 $ was introduced to 
make the notation simpler and to have a better agreement with the standard
notation of \cite{Kaup76}, \cite{BP}, \cite{KS}.

Since the energy 
$ H_q (u )  $ differs from $ 2 {\mathcal E}_{q,\lambda} $ by the additive
mass term, $ \lambda^2 \| u \|^2/2 $,
 these linearizations differ from the linearization of
\eqref{eq:nls} by a constant only. 

For future reference we also note the symmetries of $ H_q $. 
Let
$ \sigma_j $ be the Pauli matrices,
\begin{equation}
\label{eq:Pauli} \sigma_1 \defeq \begin{bmatrix} 0 & 1 \\ 1 & 0 \end{bmatrix} \,, \ \
 \sigma_2 \defeq \begin{bmatrix} 0 & -i \\ i & 0 \end{bmatrix} \,, \ \
 \sigma_3 \defeq \begin{bmatrix} 1 & 0 \\ 0 & -1  \end{bmatrix} \,.
\end{equation}
We recall that they 
are characterized by the properties that $\sigma_j^2 = I$ and 
$\sigma_j^*=\sigma_j$.  Using this notation, 
\begin{equation}
\label{eq:Hsym}
\sigma_1 H_q \sigma_1 = -H_q,  \qquad \sigma_3 H_q \sigma_3 = H_q^*\,.
\end{equation}

General considerations show that
$\sigma(H_q) \subset \mathbb{R} \cup i \RR $, and in fact 
$\sigma(H_q)  \setminus \sigma_{\rm pp} ( H_q) = 
(-\infty, -1] \cup [1,+\infty)$. 
The fact that all pure point spectrum is contained in 
$ \RR \cup i \RR $  follows by 
examining the squared operator $H_q ^2$ which turns out to be self-adjoint.
(see Buslaev-Perelman \cite[\S 2.2.3]{BP}). 
This can be done despite the fact that the operator contains the $ \delta_0 $
potential. To see that consider again the operator $ L $ given in 
\eqref{eq:L} and 
suppose that we want to consider ${L}^2$, the squared operator.  
Away from $x=0$, we see that we must have 
$u\in H^4(\mathbb{R}\backslash \{0\})$.  If ${L}u=f$, then we need 
$f\in \mathcal{D}$.  Since $f$ is continuous at $0$, we see from the equation 
${L}u=f$ that 
$\lim_{x\to 0-} \partial_x^2 u(x) = \lim_{x\to 0+} \partial_x^2 u(x)$.  
Moreover, taking 
$$u''(0) \defeq \lim_{x\to 0-} \partial_x^2 u(x) 
= \lim_{x\to 0+} \partial_x^2 u(x)$$
implies 
$$u''(0) = V(0)u(0)-f(0)$$
Away from $x=0$, we have $-u'''+V'u+Vu'=f'$ and thus the condition that
$f'(0+)-f'(0-)=-qf(0)$ becomes
\begin{equation}
\label{E:jump2}
u'''(0+)-u'''(0-)=-qu''(0)
\end{equation}
Define
\begin{equation}
\label{E:DL2}
\tilde{\mathcal{D}} = \{ \,  u \, | \, 
u\in H^4(\RR\backslash \{0\})\, , u, u'' \text{ are continuous at }x=0 
\text{ and \eqref{E:jump1},\eqref{E:jump2} hold} \}
\end{equation}
Provided $u\in \tilde{\mathcal{D}}$, ${L}^2u$ is defined and 
belongs to $L^2(\mathbb{R})$ -- so there is no need 
to worry about the square of the 
delta function not being defined.  Indeed, as soon as we know that 
$u\in \tilde{\mathcal{D}}$ as defined here, then one need only compute 
$(-\partial_x^2+V)^2u$ away from $x=0$ to obtain ${L}^2u$.  
It is thus natural to consider the squared operator $H_q^2$ on $\tilde{\mathcal{D}}\times \tilde{\mathcal{D}}$, and $H_q^2$ can in fact be shown to be self-adjoint on this domain.

\subsection{Symmetries and the generalized kernel}
\label{sgk}

As in \cite{HZ1} and \cite{HZ2} it is convenient
to introduce a natural group action on $ H^1 $:
\begin{gather}
\label{eq:repG}
\begin{gathered}    H^1 \ni u \longmapsto g\cdot u \in H^1 \,, \ \ 
(g\cdot u)(x) \defeq e^{i\gamma}e^{iv(x-a)}\mu u(\mu(x-a)) \,, \\
g = ( a, v , \gamma, \mu ) \in \RR^3 \times \RR_+ \,.
\end{gathered}
\end{gather}
This action gives a group structure on $ \RR^3 \times \RR_+ $
and it is easy to check that this
transformation group is a semidirect product of the Heisenberg group
$ H_3 $ and $ \RR_+ $:
\[ G= H_3\ltimes\mathbb{R}_+ \,, \ \ 
\mu\cdot(a,v,\gamma) = (\frac{a}{\mu} ,\mu v, \gamma) \,.\]


\begin{figure}
\begin{center}
\includegraphics[width=6in]{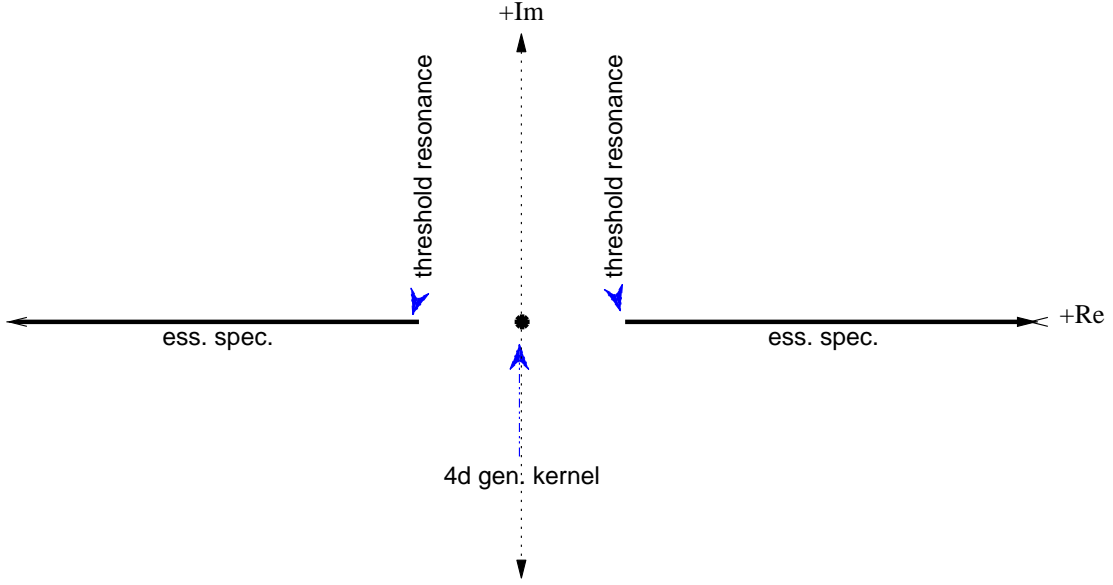}
\end{center}
\caption{The spectrum of the operator $ H_0 $. In the notation 
of \S \ref{sgk}, the generalized eigenspace at $ 0 $
is spanned by $ e_j \cdot \sech $, $ j=1,\cdots,4$.}
\label{f:pcool2}
\end{figure}

The Lie algebra of $ G$, denoted by $ {\mathfrak g } $, 
is generated by $ e_1, e_2, e_3 , e_4 $, 
which in the infinitesimal representation obtained from \eqref{eq:repG} 
is given by
\begin{equation}
\label{eq:liea1}  e_1 = -\partial_x \,, \ \
e_2 = ix \,,  \ \ e_3 = i \,, \ \  e_4 = \partial_x \cdot x \,. 
\end{equation}
It acts, for instance, on $ {\mathcal S}( \RR ) \subset H^1 $, 
and by $ X \in {\mathfrak g} $ we will denote a linear combination of
the operators $ e_j $. We note
that for $ q = 0 $ (and hence $ v = v_\lambda = \lambda \sech ( \lambda x ) $)
\begin{gather}
\label{eq:omegae}
\begin{gathered}
\omega ( e_1 \cdot v , e_2 \cdot v ) = 1 \,, \ \
\omega ( e_3 \cdot v , e_4 \cdot v ) = 1 \,, \\
\omega ( e_j \cdot v , e_3 \cdot v ) = \omega ( e_j \cdot v , e_4 \cdot v ) = 
0 \,, \ \ j = 1, 2 \,. 
\end{gathered}
\end{gather}
In the case of $ q = 0 $ the Hamilton vector fields, $ \Xi_{H_0} $, 
$ \Xi_{{\mathcal E}_{0, \lambda}} = \Xi_{H_0} + \lambda/4 $, are tangent 
to the {\em manifold of solitons}, $ G \cdot v $ -- see 
\cite[\S 3]{FrSi} or \cite[\S 2.2]{HZ2}. Hence $ {\mathcal F}_0 = 
- i {\mathcal L}_0 $
preserves $ T_v ( G \cdot v ) \simeq {\mathfrak g} \cdot v $. In fact, 
$ {\mathfrak g} \cdot v $ is the generalized kernel of $ - i {\mathcal L}_0 $:
\begin{equation}
\label{E:kernel1}
\begin{split}
& i\mathcal{L}_0(e_1\cdot v) = 0\,, \qquad i\mathcal{L}_0(e_2\cdot v) = e_1\cdot v\,,
\\ 
& i\mathcal{L}_0(e_3\cdot v) = 0\,, \qquad i\mathcal{L}_0(e_4\cdot v) = e_3\cdot v\,.
\end{split}
\end{equation}
The first and third equation are an immediate consequence of the invariance
of solutions under the circle action $ u \mapsto e^{i\theta } u $, 
and translation in $ x $. 

When $ q \neq 0 $ we lose the translation invariance but we still have 
$ i \mathcal{L}_q ( e_1 \cdot v ) = 0 $ due to the preserved circle action
symmetry. As shown in Appendix \ref{S:pnz} the generalized kernel is given by 
$$V_0 \defeq \operatorname{span}\{ v_3,v_4\}\,, $$
where 
$$v_3(x) = i v_\lambda(x) \Big|_{\lambda=1} \,, \quad v_4(x) = \partial_\lambda \Big|_{\lambda=1} v_\lambda(x), \ \ 
i\mathcal{L}_qv_3=0 \,, \ \ i\mathcal{L}_q v_4=v_3 \,. $$
Hence $ v_j $, $ j = 3, 4 $, are the generalizations of $ e_j \cdot v $, 
and in fact, 
\[ v_3 = e_3 \cdot v \,, \ \ \ 
 v_4 = e_4 \cdot v + {\mathcal O}_{H^1} ( q) \,, \]

\subsection{Coercivity estimate}
Finally we recall the crucial coercivity estimate which in a more 
general form is well known since the work of Weinstein \cite{We}.
For the special case at hand an elementary presentation can be found
in \cite[\S 4]{HZ1}.

For $ q = 0 $ we have the following estimate:
Let $ w \in H^1 ( \RR, \CC ) $ and
suppose that for any $ X \in {\mathfrak g}$,
$ \omega ( w , X \cdot \eta ) = 0 $.
Then,
\begin{equation}
\label{eq:coer}
\begin{split}
 \langle {\mathcal L}_0 w , w \rangle  \geq
 \frac{2 \rho_0  } { 7 + 2 \rho_0 }  \| w \|^2_{H^1}
\simeq 0.0555 \| w \|_{H^1}^2 \,, \ \ \rho_0 = \frac{ 9 }{ 2 ( 12 + \pi^2 \
)} \,.
\end{split}
\end{equation}

\section{Nonlinear perturbation theory}
\label{npt}

In this section we prove a result describing eigenstate selection 
and nonlinear flow approximation for a time depending on the initial
data and on the size of $ q $. Although we restrict our attention to 
the physical (and completely integrable) case of the cubic
NLS the arguments apply to nonlinearities for which the Weinstein coercivity
conditions are satisfied (see \cite{We} and Lemma \ref{L:spec_lower_bd} 
below). 

Recall
$$v_{\lambda,q}(x) = \lambda \sech(\lambda |x|+\tanh^{-1}(q/\lambda)) \,,
\ \ \| v_{\lambda, q} ( x ) \|_{L^2}^2 = 2 ( \lambda - q ) \,.   
$$
Define the projection 
\begin{equation}
\label{eq:proj}
P_{\lambda,q}\varphi \defeq \omega(\varphi,\partial_\lambda v_{\lambda,q})iv_{\lambda,q} - \omega(\varphi,iv_{\lambda,q})\partial_\lambda v_{\lambda,q}
\end{equation}
onto the generalized kernel 
$$V_{\lambda,q} \defeq \operatorname{span}_{\mathbb{R}}\{ iv_{\lambda, q}, \partial_\lambda v_{\lambda,q}\}$$
of $\mathcal{L}_{\lambda, q}$.
We will only use the $q$ subscript when it is needed for clarity (e.g. in the scaling argument below).  We will also drop the $\lambda$-subscript when $\lambda=1$.  Recall that $\la u, v \ra = \Re \int u \bar v$.

\begin{proposition}[Symplectic orthogonality]
\label{p:eigen-select}
There exists $\delta>0$ such that the following holds.  If $\varphi\in H^1$ and there exists $\lambda_0>0$, $\theta_0\in \mathbb{R}$ such that $\|\varphi-e^{i\theta_0}v_{\lambda_0}\|_{H^1} \leq \delta$, then there exists $\lambda\in (0,+\infty)$, $\theta\in \mathbb{R}$  such that $P_{\lambda}(e^{-i\theta}\varphi-v_\lambda) =0$. 
\end{proposition}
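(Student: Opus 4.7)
The plan is a standard implicit function theorem (IFT) modulation argument. Since $P_\lambda$ is the projection onto the two-dimensional space $V_\lambda = \operatorname{span}\{iv_\lambda,\partial_\lambda v_\lambda\}$, the condition $P_\lambda(e^{-i\theta}\varphi - v_\lambda) = 0$ is equivalent to the pair of scalar equations
$$F_1(\lambda,\theta;\varphi) \defeq \omega(e^{-i\theta}\varphi - v_\lambda,\,\partial_\lambda v_\lambda) = 0, \quad F_2(\lambda,\theta;\varphi) \defeq \omega(e^{-i\theta}\varphi - v_\lambda,\,iv_\lambda) = 0.$$
I would apply the IFT to the map $(\lambda,\theta) \mapsto F(\lambda,\theta;\varphi) = (F_1,F_2)$ with $\varphi \in H^1$ as parameter.

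The first step is to check the base point: when $\varphi = e^{i\theta_0}v_{\lambda_0}$ and $(\lambda,\theta)=(\lambda_0,\theta_0)$, the first argument of $\omega$ vanishes and hence $F_1 = F_2 = 0$. The second step is to compute the Jacobian of $F$ in $(\lambda,\theta)$ at this base point. The crucial identity is
$$\omega(iv_\lambda,\partial_\lambda v_\lambda) = \Re\int v_\lambda\,\partial_\lambda v_\lambda \, dx = \tfrac12\partial_\lambda\|v_\lambda\|_{L^2}^2 = 1,$$
which uses the explicit formula $\|v_{\lambda,q}\|_{L^2}^2 = 2(\lambda-q)$ from \eqref{E:gs}; I also use the antisymmetry $\omega(u,v)=-\omega(v,u)$ and the vanishing of $\omega$ on pairs of real-valued functions. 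A short calculation at the base point then gives $\partial_\theta F_1 = -1$, $\partial_\lambda F_1 = 0$, $\partial_\theta F_2 = 0$, $\partial_\lambda F_2 = 1$, so the Jacobian has determinant $1$. Since $F$ depends on $\varphi$ only through continuous linear pairings against the smooth functions $v_\lambda$ and $\partial_\lambda v_\lambda$, and smoothly on $(\lambda,\theta)$, the IFT delivers the desired $(\lambda,\theta)$ on an $H^1$-neighborhood of $e^{i\theta_0}v_{\lambda_0}$.

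To obtain uniformity of $\delta$ in $(\lambda_0,\theta_0)$, I would first eliminate $\theta_0$ by replacing $\varphi$ with $e^{-i\theta_0}\varphi$, and then reduce to $\lambda_0=1$ via the rescaling $\tilde\varphi(x)=\lambda_0^{-1}\varphi(\lambda_0^{-1}x)$ from the introduction (which sends $q\mapsto q/\lambda_0$ but preserves the form of the statement). I do not expect a substantive obstacle: the nondegeneracy of the Jacobian is exactly the fact that $\omega$ is nondegenerate on $V_\lambda$, which is built into the definition of $P_\lambda$, and the remaining smoothness and uniformity bounds on compact ranges of $\lambda_0 \in (|q|,\infty)$ are routine.
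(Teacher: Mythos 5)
Your proposal is correct and is essentially the paper's own argument: both set up the two symplectic-orthogonality conditions as a map $F(\lambda,\theta)$ vanishing at the base point $(\lambda_0,\theta_0)$, verify that the $(\lambda,\theta)$-Jacobian is nondegenerate via $\omega(iv_\lambda,\partial_\lambda v_\lambda)=\tfrac12\partial_\lambda\|v_\lambda\|_{L^2}^2=1$, and invoke the implicit function theorem. The only cosmetic difference is that you factor the phase as $e^{-i\theta}\varphi-v_\lambda$ while the paper writes $u_0-e^{i\theta}v_\lambda$ paired against $e^{i\theta}$-rotated kernel elements, which is equivalent by the $U(1)$-invariance of $\omega$.
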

\begin{proof}
Let $F:H^1\times (0,+\infty)\times \mathbb{R}\to \mathbb{R}^2$ be given by
$$F(u_0,\lambda, \theta) = \begin{bmatrix} \omega(u_0 - e^{i\theta}v_\lambda, \; ie^{i\theta}v_\lambda) \\ \omega(u_0 - e^{i\theta}v_\lambda, \; e^{i\theta}\partial_\lambda   v_\lambda) \end{bmatrix}$$
Fix $\theta_0$, $\lambda_0$.  Note that $F( e^{i\theta_0}v_{\lambda_0}, \lambda_0, \theta_0)=0$ and the matrix 
$$[ \partial_\lambda F \quad \partial_\theta F ] = 
\begin{bmatrix} 1 & 0 \\ 0 & -1 \end{bmatrix}$$
is (uniformly in $\lambda$, $\theta$) nondegenerate at $u_0=e^{i\theta_0}v_{\lambda_0}$, $\lambda=\lambda_0$, $\theta=\theta_0$.  The implicit function theorem completes the proof.
\end{proof}

\begin{theorem}[Nonlinear perturbation theory]
\label{T:npt}
Let $I\Subset (0,+\infty)$ and $|q|\ll 1$.  Suppose that $u(x,t)$ is an even solution to \eqref{eq:nls} and $w_0(x)\defeq u(x,0)-e^{i\theta}v_\lambda(x)$ satisfies $\|w_0\|_{H^1} \leq h\ll 1$ and $P_{\lambda} (e^{-i\theta}w_0)=0$ for some $\lambda\in I$, $\theta\in \mathbb{R}$.  Then
\begin{equation}
\label{eq:t1} 
\|  u (t) - e^{ it\lambda^2 /2 } \Big( v_\lambda  
+ e^{-it\mathcal{L}_{\lambda,q}}w_0 \Big) \|_{H_x^1} 
\leq C t(1+t)h^2  \,,  
\end{equation}
for all $0\leq t\ll  h^{-1/2}$.
\end{theorem}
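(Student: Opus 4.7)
\noindent\emph{Setup.} By the scaling $u(x,t)\mapsto \lambda u(\lambda x,\lambda^2 t)$ and the phase $u\mapsto e^{-i\theta}u$ we reduce to $\lambda=1$, $\theta=0$. Write $u(x,t) = e^{it/2}(v_1(x) + w(x,t))$ and substitute into \eqref{eq:nls}. Using the ground state equation satisfied by $v_1$ one obtains
\[
  i\partial_t w = \mathcal{L}_q w + \mathcal{N}(w), \qquad
  \mathcal{N}(w) \defeq -\bigl(2v_1 |w|^2 + v_1 w^2 + |w|^2 w\bigr).
\]
The map $\mathcal{N}$ is at least quadratic in $w$ and $v_1\in\mathcal{S}(\RR)$, so the one-dimensional Sobolev embedding $H^1\hookrightarrow L^\infty$ yields $\|\mathcal{N}(w)\|_{H^1}\le C(\|w\|_{H^1}^2 + \|w\|_{H^1}^3)$. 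Setting $w_L(t)\defeq e^{-it\mathcal{L}_q}w_0$ and $r(t)\defeq w(t)-w_L(t)$, the error satisfies $r(0)=0$ and $i\partial_t r = \mathcal{L}_q r + \mathcal{N}(w_L+r)$. The goal is to bound $r$ on the time scale $t\ll h^{-1/2}$.

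\medskip
\noindent\emph{Linear flow estimates.} The Hamiltonian map $\mathcal{F}_q=-i\mathcal{L}_q$ is $\omega$-antisymmetric and preserves the generalized kernel $V_\lambda=\operatorname{span}\{v_3,v_4\}$ as well as its symplectic complement $V_\lambda^{\perp,\omega}$. The hypothesis $P_\lambda w_0=0$ therefore gives $w_L(t)\in V_\lambda^{\perp,\omega}$ for all $t$. Combining the coercivity \eqref{eq:coer} (extended to $|q|\ll 1$ by perturbation on the even subspace, which is preserved by the flow since $u_0$ is even) with the conservation of $\langle\mathcal{L}_q w_L,w_L\rangle$ under the Hamiltonian flow gives
\[
  \|w_L(t)\|_{H^1}\le C\|w_0\|_{H^1}\le Ch, \quad t\in\RR.
\]
On $V_\lambda$ the Jordan relations $\mathcal{L}_q v_3=0$ and $\mathcal{L}_q v_4\in\CC\, v_3$ produce linear-in-$t$ growth, so the full propagator obeys $\|e^{-is\mathcal{L}_q}\|_{H^1\to H^1}\le C(1+|s|)$.

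\medskip
\noindent\emph{Duhamel and bootstrap.} From Duhamel's formula
\[
  r(t) = -i\int_0^t e^{-i(t-s)\mathcal{L}_q}\mathcal{N}(w_L(s)+r(s))\,ds
\]
and the propagator bound,
\[
  \|r(t)\|_{H^1} \le C(1+t)\int_0^t\bigl(h^2 + \|r(s)\|_{H^1}^2 + \|r(s)\|_{H^1}^3\bigr)\,ds.
\]
A continuity argument with bootstrap hypothesis $\|r(s)\|_{H^1}\le K s(1+s)h^2$ on $[0,t]$ closes for $t\ll h^{-1/2}$: under this hypothesis $\|r\|_{H^1}\ll h$, so the quadratic and cubic contributions are dominated by $h^2$ and the integral is controlled by $\tfrac12 K t(1+t)h^2$ once $K$ is chosen large enough. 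This yields \eqref{eq:t1}.

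\medskip
\noindent\emph{Main obstacle.} The principal technical point is the uniform $H^1$-coercivity of $\mathcal{L}_q$ on $V_\lambda^{\perp,\omega}$ for $q\ne 0$, $\lambda\in I$. For $q=0$ this is the Weinstein estimate \eqref{eq:coer}. The $\delta_0$-potential breaks translation invariance, so one must restrict to the even subspace (preserved by the flow), where $v_\lambda$ remains an energy minimizer even for $q<0$ (despite instability of the full ground state, cf.\ \cite{LFDKS}); the coercivity then survives the perturbation to $|q|\ll 1$ uniformly in $\lambda\in I$. This is precisely the reason for the evenness hypothesis on $u(x,0)$, and for requiring $\lambda$ to lie in a compact subset of $(0,\infty)$.
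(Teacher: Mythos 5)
Your proposal is correct, and it rests on exactly the same two pillars as the paper's argument: the uniform coercivity of $\mathcal{L}_q$ on the even, symplectically orthogonal subspace (the paper's Lemma \ref{L:spec_lower_bd}, which you correctly single out as the main obstacle and justify by the parity argument), and the Jordan structure $i\mathcal{L}_q v_3=0$, $i\mathcal{L}_q v_4=v_3$ of the generalized kernel, which is the source of the $t^2h^2$ term. The one genuine difference is how you integrate the error equation. You first package the linear analysis into an operator bound $\|e^{-is\mathcal{L}_q}\|_{H^1\to H^1}\leq C(1+|s|)$ on the even subspace (bounded on $\operatorname{Ran}(I-P)$ by coercivity plus conservation of $\la \mathcal{L}_q\cdot,\cdot\ra$, linearly growing on $V_0$), and then run Duhamel with a bootstrap. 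The paper never bounds the propagator in operator norm: it splits the error as $\tilde w=\hat w+P\tilde w$, controls $\hat w$ by differentiating the conserved-type quantity $\la \mathcal{L}_q\hat w,\hat w\ra$ directly (using $P\circ\mathcal{L}_q=\mathcal{L}_q\circ P$ so that the coercivity lemma applies to $\hat w$ at each time), and controls $P\tilde w$ by the explicit scalar ODEs for $\omega(\tilde w,v_3)$ and $\omega(\tilde w,v_4)$. The two routes give the same final bound $t(1+t)h^2$; the paper's energy method is marginally sharper on the non-kernel component ($Th^2$ rather than $T(1+T)h^2$) and avoids having to state the propagator bound as a separate lemma, while your Duhamel formulation is more modular and makes the origin of each power of $t$ transparent. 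For a complete write-up you would still need to supply the proof of the uniform coercivity for $0<|q|\ll 1$ (the perturbation of $\sigma(L_{q\pm})$ and the exact oddness of the near-zero eigenfunction of $L_{q+}$), which you have only asserted, but you have identified it correctly as the crux.
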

\begin{remark}
The constant $C$ depends on $I$ (the range of values in which $\lambda$ lies), and the restrictions $|q|\ll 1$, $h\ll 1$ and $t\ll h^{-1/2}$ all indicate an implicit (small) constant depending on $I$.  
\end{remark}

\begin{remark}  We will ultimately take $h=Cq$ to prove Theorem 1 in \S \ref{ptm}.  Note that our use of $h$ here is different from the connection to the semiclassical problem (discussed in the introduction) where $q=h^2$.
\end{remark}

\begin{lemma}[Coercivity]
\label{L:spec_lower_bd}
There exists $c_0>0$ (independent of $q$) with the following property:  If $|q|\ll 1$, 
$P_q f=0$ and $f$ is even, then 
$$\|f\|_{H_x^1}^2 \leq c_0\la \mathcal{L}_q  f, f \ra$$
\end{lemma}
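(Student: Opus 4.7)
My plan is to perturb the $q = 0$ coercivity estimate \eqref{eq:coer}, treating the $\delta_0$ potential and the $q$-dependence of the ground state as small quadratic-form perturbations. First, since $v = v_{1,q}$ is real and even, the four generators $e_j \cdot v$ of the $q = 0$ generalized kernel split by parity: $e_1 \cdot v$ and $e_2 \cdot v$ are odd, while $e_3 \cdot v = iv$ and $e_4 \cdot v = v + xv'$ are even. A direct check using \eqref{eq:omega} gives $\omega(f, e_1 \cdot v) = \omega(f, e_2 \cdot v) = 0$ automatically for any even $f$, so only the conditions encoded in $P_q f = 0$ require the hypothesis. A short calculation shows that at $q = 0$ the function $\partial_\lambda v_\lambda|_{\lambda = 1}$ coincides with $e_4 \cdot v$, so $P_0$-orthogonality for even functions is precisely the hypothesis of \eqref{eq:coer}.

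Next I would split $f = \tilde f + p$ with $p \in V_{1,0} = \mathrm{span}\{iv,\,\partial_\lambda v_\lambda|_{\lambda=1}\}$ chosen so that $P_0 \tilde f = 0$; using $\omega(iv, \partial_\lambda v_\lambda|_{\lambda=1}) = 1$ from \eqref{eq:omegae}, the coefficients of $p$ are expressed directly in terms of $\omega(f, \cdot)$ against the basis vectors of $V_{1,0}$. Differentiating \eqref{E:gs} yields $\|v_{1,q} - v_1\|_{H^1} + \|\partial_\lambda v_{\lambda,q}|_{\lambda=1} - \partial_\lambda v_\lambda|_{\lambda=1}\|_{H^1} = O(q)$, so the condition $P_q f = 0$ forces $\omega(f, iv) = O(q\|f\|_{H^1})$ and $\omega(f, \partial_\lambda v_\lambda|_{\lambda=1}) = O(q\|f\|_{H^1})$. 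Hence the coefficients of $p$ are $O(q\|f\|_{H^1})$, giving $\|p\|_{H^1} \leq C|q|\|f\|_{H^1}$ and $\|\tilde f\|_{H^1} = (1 + O(q))\|f\|_{H^1}$; note that $\tilde f$ remains even.

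For the quadratic form, reading off \eqref{E:Lq} shows that $\mathcal{L}_q - \mathcal{L}_0$ is diagonal with entries of the form $-c(v_{1,q}^2 - \sech^2 x) - q\delta_0$, where the multiplier is $O(q)$ in $L^\infty$ and the $\delta_0$ contribution $-q|f_j(0)|^2$ is controlled by the Sobolev bound \eqref{eq:basic}, yielding $|\langle(\mathcal{L}_q - \mathcal{L}_0)f, f\rangle| \leq C|q|\|f\|_{H^1}^2$. Expanding
$$\langle \mathcal{L}_0 f, f\rangle = \langle \mathcal{L}_0 \tilde f, \tilde f\rangle + 2\Re\langle \tilde f, \mathcal{L}_0 p\rangle + \langle \mathcal{L}_0 p, p\rangle,$$
and recalling from \eqref{E:kernel1} together with the $\lambda$-derivative of $\mathcal{E}'_{0,\lambda}(v_\lambda)=0$ that $\mathcal{L}_0(iv) = 0$ and $\mathcal{L}_0(\partial_\lambda v_\lambda|_{\lambda=1}) = v$, the cross and $pp$ terms are bounded by $O(q\|f\|_{H^1}^2)$. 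Applying \eqref{eq:coer} to the even, $P_0$-orthogonal function $\tilde f$ then gives
$$\langle \mathcal{L}_q f, f\rangle \geq \tfrac{2\rho_0}{7+2\rho_0}\|\tilde f\|_{H^1}^2 - C|q|\|f\|_{H^1}^2 = \bigl(\tfrac{2\rho_0}{7+2\rho_0} - O(q)\bigr)\|f\|_{H^1}^2,$$
from which the lemma follows for $|q|$ sufficiently small, with any $c_0$ slightly larger than $(7+2\rho_0)/(2\rho_0)$ working uniformly in $q$.

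The main technical obstacle I anticipate is tracking the $H^1$-regularity in $q$ of $v_{\lambda,q}$ and its $\lambda$-derivative despite the nonsmoothness at $x=0$ caused by the $|x|$ in \eqref{E:gs} and the jump condition \eqref{E:jump1}; these estimates are routine from the explicit formula but need verification before the perturbation argument above can be invoked cleanly.
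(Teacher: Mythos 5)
Your argument is correct, but it follows a genuinely different route from the paper's. You perturb the $q=0$ matrix coercivity estimate \eqref{eq:coer} at the level of quadratic forms: parity kills the pairings with the odd generators $e_1\cdot\eta$, $e_2\cdot\eta$; the $O(q)$ closeness of $v_{1,q}$ and $\partial_\lambda v_{\lambda,q}$ to their $q=0$ counterparts converts $P_qf=0$ into approximate $P_0$-orthogonality; the correction $p$ is $O(q\|f\|_{H^1})$ in norm; and the form difference $\mathcal{L}_q-\mathcal{L}_0$ (including the $-q\delta_0$ term, via \eqref{eq:basic}) is $O(q)\|f\|_{H^1}^2$. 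Since the unperturbed form is coercive with a $q$-independent constant on the relevant even, symplectically orthogonal subspace, all the $O(q)$ losses are absorbed for $|q|$ small — and you do not even need $\mathcal{L}_0(iv)=0$ or $\mathcal{L}_0\partial_\lambda v_\lambda=v$ for the cross terms, as boundedness of $\mathcal{L}_0:H^1\to H^{-1}$ suffices. The paper instead works componentwise on the scalar operators $L_{q\pm}$: it invokes spectral perturbation theory to locate the perturbed eigenvalues ($\lambda_1=-\tfrac32+O(q)$, $\lambda_0=O(q)$ for $L_{q+}$, and $\lambda_2=0$ exactly for $L_{q-}$ since $L_{q-}v_q=0$), proves by an ODE/parity argument that the near-zero eigenfunction $g_0$ of $L_{q+}$ is \emph{exactly} odd (so even test functions never see the potentially dangerous $O(q)$ eigenvalue), and then uses \cite[Lemma 4.2]{HZ1} to trade orthogonality to $v$ and $\partial_\lambda v_\lambda$ for orthogonality to the true eigenfunctions, with explicit positive constants such as $\tfrac{3\pi^2}{16}-\tfrac32+O(q)$. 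Your approach is softer and shorter, treating the free coercivity as a black box; the paper's yields more structural information about $\sigma(L_{q\pm})$ and sharper constants. The one point you rightly flag — uniform $H^1$ control in $q$ of $v_{\lambda,q}$ and $\partial_\lambda v_{\lambda,q}$ despite the corner at $x=0$ — is indeed routine from \eqref{E:gs}, since $\partial_\lambda\tanh^{-1}(q/\lambda)=O(q)$ and $|x|\sech'(|x|+\theta)$ differs from $x\sech'(x)$ by $O_{H^1}(q)$.
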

\begin{proof}
We have, with $f=f_1+f_2$
$$\la \mathcal{L}_q f,f \ra = \la L_{q+}f_1,f_1\ra + \la L_{q-}f_2,f_2 \ra \,,$$
where $L_{q+}$ and $L_{q-}$ are the self-adjoint operators defined in \eqref{E:Lq}.  It suffices to prove that if $f$ is even and real-valued, then
\begin{equation}
\label{E:co1}
\la f, v \ra = 0 \implies \la L_+f,f \ra \geq c\|f\|_{L^2}^2,
\end{equation}
and
\begin{equation}
\label{E:co2}
\la f, \partial_\lambda v_\lambda |_{\lambda=1} \ra =0 \implies \la L_-f,f \ra \geq c\|f\|_{L^2}^2 \,.
\end{equation}

The operators $L_\pm$ (defined as $L_{q\pm}$ with $q=0$) were analysed in \cite[\S 4]{HZ1}, and it was proved there that $\sigma(L_+) = \{ -\frac32,0\} \cup [\frac12,+\infty)$ and $\sigma(L_-)=\{ 0\} \cup [\frac12,+\infty)$.  Moreover, the eigenvalues and $L^2$ normalized eigenfunctions are given explicitly:
$$L_+ (\tfrac{\sqrt 3}{2}\sech^2) = -\tfrac32 (\tfrac{\sqrt 3}{2}\sech^2), \qquad L_+ (\sqrt{\tfrac32}\sech') = 0, \qquad L_-(\tfrac1{\sqrt 2}\sech) =0 \,.$$
By perturbation theory (this is standard perturbation theory for 2nd-order scalar self-adjoint operators, as opposed to the perturbation theory of Appendix \ref{S:pnz}), $\sigma(L_{q+}) = \{ \lambda_1,\lambda_0\} \cup [\frac12,+\infty)$, where $\lambda_1=-\frac32+\mathcal{O}(q)$ and $\lambda_0 = \mathcal{O}(q)$.  Moreover, the $L^2$ normalized associated eigenfunctions, $g_1$ and $g_0$,
$$L_{q+}g_1=\lambda_1 g_1, \qquad L_{q_+}g_0=\lambda_0 g_0\,,$$
satisfy
$$g_1(x)=\tfrac{\sqrt 3}{2}\sech^2x + \mathcal{O}(q), \qquad g_0(x)=\sqrt{\tfrac32}\sech'x + \mathcal{O}(q) \,.$$
In particular, $g_0(x)$ is ``nearly'' odd.  However, we also have that $L_{q+}g_0(-x)=\lambda_0g_0(-x)$, and hence $g_0(-x) = cg(x)$ for some constant $c$.  If $g_0(0) \neq 0$, then $c=1$ and hence $g_0(x)$ is even which contradicts the fact that it is nearly odd.  From this we conclude $g_0(0)=0$, and since $g_0(x)$ is not identically zero and solves a second order ODE, we must have $g_0'(0)\neq 0$.  Taking the derivative of the identity $g_0(-x) = cg_0(x)$ and evaluating at $x=0$ gives that $c=-1$.  Hence $g_0$ is exactly odd.

Now we prove \eqref{E:co1}.  Since $f$ is assumed even, we have by the Spectral Theorem that if $\la f, g_1 \ra =0$, then $\la L_{q+} f,f\ra \geq \frac12 \|f\|_{L^2}^2$.  By \cite[Lemma 4.2]{HZ1} (with, in the notation of that Lemma, $v_0=g_1$, $v_1=v$, $c_0=-\lambda_1$), we have that if $\la f, v\ra =0$, then 
$$\la L_{q+}f , f \ra \geq \left( \lambda_1+(\tfrac12-\lambda_1)\frac{\la g_1,v\ra^2}{\|v\|_{L^2}^2}\right) \|f\|_{L^2}^2 \,.$$
By the perturbation theory, the coefficient evaluates to $\frac{3\pi^2}{16}-\frac32+\mathcal{O}(q) >0$.

Now we carry out the analysis of $L_{q-}$.  By perturbation theory, we know that $\sigma(L_{q-}) = \{ \lambda_2 \} \cup [\frac12,+\infty)$, where $\lambda_2=\mathcal{O}(q)$.  However, direct calculation shows that $L_{q-}v=0$ (where $v(x) = \sech(|x|+\tanh^{-1}q)$), and thus $\lambda_2=0$.  By the Spectral Theorem, if $\la f, v\ra =0$, then $\la L_{q-}f,f\ra \geq \frac12 \|f\|_{L^2}^2$.   By \cite[Lemma 4.2]{HZ1},  we have that if $\la f, \partial_\lambda v_\lambda|_{\lambda=1}\ra =0$,
 $$\la L_{q-}f , f \ra \geq \left( \tfrac12\frac{\la \partial_\lambda v_\lambda |_{\lambda=1},v\ra^2}{\|\partial_\lambda v_{\lambda}|_{\lambda=1}\|_{L^2}^2\|v\|_{L^2}^2}\right) \|f\|_{L^2}^2 \,.$$
Elliptic regularity completes the argument.
\end{proof}

\begin{proof}[Proof of Theorem \ref{T:npt}]
Let $\tilde u(x,t) = e^{-i\theta}\lambda^{-1}u(\lambda^{-1}x,\lambda^{-2}t)$.  Then $\tilde u$ solves
$$i\partial_t\tilde u + \tfrac12\partial_x^2\tilde u + \frac{q}{\lambda}\delta_0\tilde u + |\tilde u|^2\tilde u =0$$
(\eqref{eq:nls} with $q$ replaced by $q/\lambda$) with initial data $\tilde u_0(x) = e^{-i\theta}\lambda^{-1} u_0(\lambda^{-1} x)$.  Moreover,
$$\|u_0-e^{i\theta}v_{\lambda,q}\|_{H^1} \leq h \implies \|\tilde u_0 - v_{1,q/\lambda}\|_{H^1}\leq C ( \lambda )  h$$ 
and 
$$P_{\lambda,\theta,q}(u_0-e^{i\theta}v_\lambda)=0\implies P_{1,0,q/\lambda} (\tilde u_0-v_{1,q/\lambda}) = 0$$ 
Hence, it suffices to prove the theorem in the case $\lambda=1$, $\theta=0$.  Let
$$v_3(x) = i v_\lambda(x) \Big|_{\lambda=1} \,, \quad v_4(x) = \partial_\lambda \Big|_{\lambda=1} v_\lambda(x)$$
Note (by direct computation) that $i\mathcal{L}_qv_3=0$ and $i\mathcal{L}_q v_4=v_3$ and
$$Pw = \omega(w,v_4)v_3-\omega(w,v_3)v_4$$ 
is the symplectic orthogonal projection onto the generalized kernel $V_0=\operatorname{span}\{v_3,v_4\}$.
  
Define $v(x)\defeq v_\lambda(x)\big|_{\lambda=1}$ and $w(t)$ by the relation $u(t)=e^{it/2}(v+w(t))$, and then note that $w$ solves
$$
\left\{
\begin{aligned}
&\partial_t w = -i\mathcal{L}_q w + iF\\
&w\big|_{t=0} = u_0-v
\end{aligned}
\right. \, ,
$$
where
$$F= 2v|w|^2+vw^2+|w|^2w \,.$$
Also define 
$$w_1\defeq e^{-\frac12 it\mathcal{L}_q}(u_0-v) 
\quad \text{and} \quad \tilde w \defeq w-w_1 \,,$$  
so that $\tilde w$ satisfies 
$$
\left\{
\begin{aligned}
&\partial_t \tilde w = - i \mathcal{L}_q \tilde w + iF \\
&w\big|_{t=0}=0
\end{aligned}
\right. \, ,
$$
where now we write $F$ as

$$F = 2v|w_1+\tilde w|^2+v(w_1+\tilde w)^2
+|w_1+\tilde w|^2(w_1+\tilde w) \, .$$


Since $\mathcal{L}_q$ is self-adjoint with respect to $\la \cdot, \cdot \ra$,
$$\partial_t \la \mathcal{L}_q w_1, w_1 \ra = 2\la \mathcal{L}_q w_1, \partial_t w_1 \ra = 2\la \mathcal{L}_q w_1, i\mathcal{L}_q w_1 \ra=0 \,.$$
By Lemma \ref{L:spec_lower_bd},
$$\|w_1(t)\|_{H^1}^2 \lesssim \la \mathcal{L}_q w_1(t), w_1(t) \ra = \la \mathcal{L}_q w_0, w_0 \ra \lesssim \|w_0\|_{H^1}^2 \, .$$
Hence, there exists a constant $c_1>0$ such that
\begin{equation}
\label{E:w1bd}
\|w_1(t)\|_{H^1}\leq c_1h \quad \text{for all }t \,.
\end{equation}
It can be checked by direct computation using $\mathcal{L}_q v_4=iv_3$ that
\begin{equation}
\label{E:PLcommute}
P\circ \mathcal{L}_q = \mathcal{L}_q \circ P = \omega(w,v_3)v_3 \,.
\end{equation}
(An abstract argument using the fact 
that $\mathcal{L}_q$ preserves $V_0$ can be given to justify the first equality, which is, in fact, all we use for now).  Let 
$$\hat w = \tilde w - P \tilde w \,$$
so that $P\hat w(t) =0$ for all $t$ (and hence Lemma \ref{L:spec_lower_bd} will be applicable to $f=\hat w(t)$).  Using \eqref{E:PLcommute}, we find that
$$\partial_t \hat w = -\tfrac12 i\mathcal{L}_q \hat w +(I-P)iF \,.$$
Using the self-adjointness of $\mathcal{L}_q$ with respect to $\la \cdot, \cdot \ra$ and the above equation for $\partial_t \hat w$, we find that
\begin{align*}
\partial_t \la \mathcal{L}_q \hat w, \hat w \ra &=  \la \mathcal{L}_q \hat w, \partial_t \hat w \ra 
= 2\la \mathcal{L}_q \hat w , -\tfrac12i\mathcal{L}_q\hat w + (1-P)F \ra\\
&= 2\la \mathcal{L}_q \hat w , F\ra  \,.
\end{align*}
Let $[0,T]$ be a time interval over which $\tilde w$ remains 
\begin{equation}
\label{E:tilde-w-bootstrap}
\|\tilde w\|_{L_{[0,T]}^\infty H_x^1} \leq c_1 h \,,
\end{equation}
where $c_1$ is given in \eqref{E:w1bd} (so that we know \emph{a priori}
 that $\tilde w$ is at least no worse that 
$w_1$, although of course we want to show that it is better).  [All future instances of $\lesssim$ mean ``less than a constant times'', where the constant depends upon $c_0$ (in Lemma \ref{L:spec_lower_bd}) and $c_1$.] 
This gives an estimate for $F$:  
$\|F\|_{L_{[0,T]}^\infty H_x^1} \lesssim h^2$.  
By integrating, for $0\leq t\leq T$, we have
$$| \la \mathcal{L}_q \hat w(t), \hat w(t) \ra | \lesssim 
 T \, \|\hat w\|_{L_{[0,T]}^\infty H_x^1} h^2 \,.$$
Taking the sup over $t\in [0,T]$ and employing Lemma \ref{L:spec_lower_bd},
$$
\| \hat w \|_{L_{[0,T]}^\infty H_x^1}^2 \lesssim
 T h^2 \|\hat w\|_{L_{[0,T]}^\infty H_x^1} \,,$$
and hence
\begin{equation}
\label{E:w_hat_bd}
\| \hat w \|_{L_{[0,T]}^\infty H_x^1} \lesssim  T h^2 \,. 
\end{equation}

From this, we need to infer a bound on $\tilde w$.  We compute
\begin{align*}
  \partial_t \, \omega(\tilde w, v_3) &= \omega( -\tfrac12 i \mathcal{L}_q \tilde w + iF, v_3)
  = -\tfrac12 \la \mathcal{L}_q \tilde w, v_3 \ra + \la F, v_3 \ra \\
&= \la F, v_3 \ra \,,
\end{align*}
and thus, on $[0,T]$, we have the bound
$$|\partial_t \, \omega(\tilde w, v_3)| \lesssim  h^2 \,.$$
Integrating in time, we find that for all $t\in [0,T]$,
\begin{equation}
\label{E:tilde-w-v3}
|\omega(\tilde w(t), v_3)| \lesssim h^2 \, t \,.
\end{equation}

Now we perform a similar computation for $\omega(\tilde w, v_4)$.
\begin{align*}
\partial_t \, \omega(\tilde w, v_4) &= \omega( -\tfrac12 i \mathcal{L}_q \tilde w + iF, v_4) 
= -\tfrac12 \la \mathcal{L}_q \tilde w, v_4 \ra + \la F, v_4 \ra \\
&= -\tfrac12 \la  \tilde w, iv_3 \ra+ \la F, v_4 \ra 
= \tfrac12 \omega(  \tilde w, v_3) + \la F, v_4 \ra \,. 
\end{align*}
Appealing to \eqref{E:tilde-w-v3}, we obtain the bound
$$|\partial_t \, \omega(\tilde w, v_4 ) | \lesssim  h^2 \, t +  h^2 \,, $$
which, integrated in time, yields
\begin{equation}
\label{E:tilde-w-v4}
| \omega(\tilde w(t), v_4 ) | \lesssim t^2 h^2 +  t h^2 \,.
\end{equation}
The estimates \eqref{E:tilde-w-v3} and \eqref{E:tilde-w-v4} give
$$\|P\tilde w\|_{L_{[0,T]}^\infty H_x^1} \lesssim h^2 \, T^2 +  h^2 \, T \,.$$
Now, provided $T\lesssim h^{-1/2}$, we obtain
$$\|P\tilde w\|_{L_{[0,T]}^\infty H_x^1} \leq \tfrac14 c_1 h \,,$$
and thus
$$\|\tilde w\|_{L_{[0,T]}^\infty H_x^1} = \|\hat w\|_{L_{[0,T]}^\infty H_x^1} + \|P\tilde w\|_{L_{[0,T]}^\infty H_x^1} \leq \tfrac12 c_1 h \,,$$
for $h$ suitably small (in terms of the constants $c_0$ and $c_1$).  Thus we have that the bootstrap assumption \eqref{E:tilde-w-bootstrap} indeed remains valid over $[0,T]$, and moreover, that
$$\|\tilde w(t)\|_{H^1} \lesssim t(1+t)h^2 $$
holds over the whole interval $[0,T]$.
\end{proof}

The following corollary is useful in streamlining Theorem \ref{T:main}:

\begin{corollary}
\label{C:npt}
Suppose the hypothesis of Theorem \ref{T:npt} holds, and that $\tilde\lambda$ satisfies $|\lambda-\tilde \lambda| \lesssim h^2$.  Then, in place of \eqref{eq:t1}, we have
$$\|u(t)-e^{it\tilde \lambda^2/2}\big( v_{\tilde \lambda} + e^{-it\mathcal{L}_{\tilde \lambda,q}}w_0\big)\|_{H_x^1} \leq C(1+t)^2h^2\,.$$
\end{corollary}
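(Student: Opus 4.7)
The plan is to derive the corollary from Theorem \ref{T:npt} via the triangle inequality, estimating the $H^1$-difference between the $\lambda$- and $\tilde\lambda$-approximants. Since Theorem \ref{T:npt} contributes $Ct(1+t)h^2 \leq C(1+t)^2h^2$, it suffices to show that
\[D(t) \defeq \|e^{it\tilde\lambda^2/2}(v_{\tilde\lambda}+e^{-it\mathcal{L}_{\tilde\lambda,q}}w_0) - e^{it\lambda^2/2}(v_\lambda+e^{-it\mathcal{L}_{\lambda,q}}w_0)\|_{H^1}\]
is also $\lesssim (1+t)^2 h^2$.

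I would split $D(t)$ into three pieces: a scalar phase factor change $(e^{it\tilde\lambda^2/2}-e^{it\lambda^2/2})$ multiplying the full $\lambda$-approximant, a ground-state profile change $v_{\tilde\lambda}-v_\lambda$, and a propagator change $(e^{-it\mathcal{L}_{\tilde\lambda,q}}-e^{-it\mathcal{L}_{\lambda,q}})w_0$. The first two are routine: the phase factor has modulus $\lesssim th^2$ and the $\lambda$-approximant is uniformly $H^1$-bounded (see the next paragraph for the kernel-preservation argument), contributing $O(th^2)$; the profile change is $\lesssim |\lambda-\tilde\lambda|\lesssim h^2$ in $H^1$ by smoothness of $\lambda\mapsto v_\lambda$ over the compact parameter set $I$.

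The propagator difference is the main step. I would write it via Duhamel as
\[(e^{-it\mathcal{L}_{\tilde\lambda,q}}-e^{-it\mathcal{L}_{\lambda,q}})w_0 = i\int_0^t e^{-i(t-s)\mathcal{L}_{\tilde\lambda,q}}(\mathcal{L}_{\tilde\lambda,q}-\mathcal{L}_{\lambda,q})e^{-is\mathcal{L}_{\lambda,q}}w_0\,ds,\]
and bound each factor separately. From \eqref{E:Lq}, the operator $\mathcal{L}_{\tilde\lambda,q}-\mathcal{L}_{\lambda,q}$ is a smooth matrix multiplier of size $\lesssim h^2$ (the $-\partial_x^2$ and $q\delta_0$ terms are $\lambda$-independent and cancel), hence bounded $H^1\to H^1$ with norm $\lesssim h^2$. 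Next, since $P_\lambda w_0=0$ and $P_\lambda$ commutes with $\mathcal{L}_{\lambda,q}$ by \eqref{E:PLcommute}, the trajectory $e^{-is\mathcal{L}_{\lambda,q}}w_0$ remains in $\ker P_\lambda$ for all $s$, and Lemma \ref{L:spec_lower_bd} together with the energy conservation $\partial_s\la\mathcal{L}_{\lambda,q}w,w\ra=0$ used in \eqref{E:w1bd} gives $\|e^{-is\mathcal{L}_{\lambda,q}}w_0\|_{H^1}\lesssim h$ uniformly in $s$. Finally, the outer propagator satisfies $\|e^{-i(t-s)\mathcal{L}_{\tilde\lambda,q}}\|_{H^1\to H^1}\lesssim 1+(t-s)$ since the Jordan-block coupling $\mathcal{L}_{\tilde\lambda,q}v_4^{\tilde\lambda}\propto v_3^{\tilde\lambda}$ produces at most linear growth on the two-dimensional generalized kernel, while the symplectic complement is controlled by coercivity. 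Multiplying and integrating,
\[\|(e^{-it\mathcal{L}_{\tilde\lambda,q}}-e^{-it\mathcal{L}_{\lambda,q}})w_0\|_{H^1} \lesssim \int_0^t (1+(t-s))\,h^2\,h\,ds \lesssim (1+t)^2 h^3.\]

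Summing the three pieces yields $D(t)\lesssim (1+t)h^2+h^2+(1+t)^2h^3\lesssim (1+t)^2h^2$ for $h\ll 1$, which combined with Theorem \ref{T:npt} gives the claimed $C(1+t)^2h^2$ bound. The main subtlety lies in the two bounds used in the Duhamel step: verifying that $\mathcal{L}_{\tilde\lambda,q}-\mathcal{L}_{\lambda,q}$ is genuinely a zeroth-order multiplier of size $h^2$ on $H^1$ (so that no $-\partial_x^2$ or $\delta_0$ contribution spoils the estimate), and extracting the uniform $H^1$-boundedness of the inner flow on $\ker P_\lambda$ together with at-most-linear growth of the outer flow. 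Both are direct consequences of the material already developed in \S\ref{pr} and \S\ref{npt}.
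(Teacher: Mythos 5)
Your proposal is correct, and the overall decomposition (Theorem \ref{T:npt} plus a triangle inequality over the phase change, the profile change $v_\lambda-v_{\tilde\lambda}$, and the propagator change) matches the structure of the paper's argument; the genuine difference is in how the propagator comparison $\|(e^{-it\mathcal{L}_{\lambda,q}}-e^{-it\mathcal{L}_{\tilde\lambda,q}})w_0\|_{H^1}$ is carried out. The paper runs an energy argument on the difference $u-\tilde u$: it differentiates $\la \mathcal{L}_\lambda(u-\tilde u),u-\tilde u\ra$, integrates, and then, since $\tilde u$ drifts out of $\ker P_\lambda$, it must separately track $P_\lambda(u-\tilde u)$ (via $P_{\tilde\lambda}\tilde u=e^{-it\mathcal{L}_{\tilde\lambda}}P_{\tilde\lambda}f$, which is where the linear growth in $t$ enters) before coercivity can be applied; this yields $(1+t)h^2\|f\|_{H^1}$. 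You instead use Duhamel and three operator bounds: the $\mathcal{O}(h^2)$ $W^{1,\infty}$-multiplier bound on $\mathcal{L}_{\tilde\lambda,q}-\mathcal{L}_{\lambda,q}$ (correct --- the $\partial_x^2$ and $\delta_0$ terms cancel and the remaining matrix potential is Lipschitz with norm $\lesssim|\lambda-\tilde\lambda|$, the conjugate-linear entry being harmless), the uniform bound on the inner flow restricted to $\ker P_\lambda$, and the $(1+t)$ operator-norm bound for the outer flow obtained by splitting off the two-dimensional generalized kernel. Your route is more modular and avoids the bootstrap-flavoured bookkeeping of $P_\lambda(u-\tilde u)$, at the cost of the slightly weaker (but amply sufficient) bound $(1+t)^2h^3$ in place of the paper's $(1+t)h^3$; both rest on the same two structural inputs, namely conservation of $\la\mathcal{L}w,w\ra$ plus coercivity on the symplectic complement of the kernel, and the Jordan-block action on $\operatorname{span}\{v_3,v_4\}$. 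One point you should make explicit: Lemma \ref{L:spec_lower_bd} requires evenness, so your coercivity steps (for both the inner and outer flows) need the observation that $w_0$, $v_{\tilde\lambda}$, the multiplier $\mathcal{L}_{\tilde\lambda,q}-\mathcal{L}_{\lambda,q}$, and both propagators all preserve the even subspace --- true here, and also used tacitly in the paper, but it is the hypothesis that makes the whole scheme work.
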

\begin{proof}
It suffices to show that for any $f$ such that $P_{\lambda,q} f=0$, we have
$$\|e^{-it\mathcal{L}_{\lambda,q}}f - e^{-it\mathcal{L}_{\tilde \lambda,q}}f\|_{H_x^1} \lesssim h^2(1+t)\|f\|_{H_x^1} \,.$$
(In fact, this is stronger than necessary since $\|w_0\|_{H_x^1} \leq h$.  The dominant error term arises from the fact that $\|e^{it\lambda^2/2}v_\lambda - e^{it\tilde\lambda^2/2}v_{\tilde\lambda}\|_{H_x^1} \lesssim h^2$.)
Let $u(t) = e^{-it\mathcal{L}_{\lambda,q}}f$ and $\tilde u(t) = e^{-it\mathcal{L}_{\tilde \lambda,q}}f$.  Henceforth we will drop the $q$ subscript.  Then,
$$\partial_t (u-\tilde u) = -i\mathcal{L}_{\lambda}(u-\tilde u) +i(\mathcal{L}_\lambda-\mathcal{L}_{\tilde \lambda})\tilde u\,.$$
Note that
$$(\mathcal{L}_\lambda - \mathcal{L}_{\tilde \lambda})\tilde u = \tfrac12(\lambda^2-\tilde\lambda^2)\tilde u - 2(v_\lambda^2-v_{\tilde\lambda}^2)\tilde u -(v_\lambda^2-v_{\tilde\lambda}^2)\bar{\tilde u} \,.$$
As in the proof of Theorem \ref{T:npt}, we compute
$$\partial_t \la \mathcal{L}_\lambda (u-\tilde u), u-\tilde u\ra = 2\la \mathcal{L}_\lambda(u-\tilde u), \;\partial_t(u-\tilde u) \ra \,.$$
Substituting the above formulas and using that $\la \mathcal{L}g,i\mathcal{L}g\ra =0$, we obtain
$$\partial_t \la \mathcal{L}_\lambda (u-\tilde u), u-\tilde u\ra = 2\la \mathcal{L}_\lambda(u-\tilde u) , \; \big( \tfrac12(\lambda^2-\tilde\lambda^2)\tilde u - 2(v_\lambda^2-v_{\tilde \lambda}^2)\tilde u - (v_\lambda^2-v_{\tilde\lambda}^2)\bar{\tilde u} \big) \ra \,.$$
The next step is to write out the operator $\mathcal{L}_\lambda$, and address the above expression term by term.  For the $\partial_x^2$ term, integrate by parts once, and then apply the Cauchy-Schwarz inequality.  For the $\delta_0$ term, use that $|g(0)| \leq \|g\|_{H_x^1}$.  For all other terms, directly apply 
the Cauchy-Schwarz inequality.  The resulting bound is
$$|\partial_t \la \mathcal{L}_\lambda(u-\tilde u), u-\tilde u \ra| \lesssim |\lambda-\tilde\lambda| \|u-\tilde u\|_{H_x^1}\|\tilde u\|_{L_{[0,t]}^\infty H_x^1} \,.$$
Following the argument used to obtain the bound \eqref{E:w1bd} in the proof of Theorem \ref{T:npt}, we conclude that
$$\|u(t)\|_{H_x^1} \lesssim \|f\|_{H_x^1} \,,$$
uniformly for all $t$.  Using that $\|\tilde u\|_{H_x^1} \leq \|u-\tilde u\|_{H_x^1} + \|\tilde u\|_{H_x^1}$, we obtain
$$|\partial_t \la \mathcal{L}_\lambda(u-\tilde u), u-\tilde u \ra| \lesssim h^2 \|u-\tilde u\|_{H_x^1}^2 + h^2\|u-\tilde u\|_{H_x^1}\|f\|_{H_x^1} \,.$$
Integrating over $[0,t]$ and using that $u(0)=\tilde u(0)$, we obtain
\begin{equation}
\label{E:coer3}
 |\la \mathcal{L}_\lambda(u(t)-\tilde u(t)), u(t)-\tilde u(t) \ra| \lesssim th^2 \|u-\tilde u\|_{L_{[0,t]}^\infty H_x^1}^2 + th^2 \|u-\tilde u\|_{L_{[0,t]}^\infty H_x^1}\|f\|_{H_x^1} \,.
\end{equation}
By Lemma \ref{L:spec_lower_bd}, 
$$\|(u-\tilde u) - P_\lambda (u-\tilde u) \|_{H_x^1}^2 \lesssim \la \mathcal{L}_\lambda\big( (u-\tilde u) - P_\lambda (u-\tilde u) \big), \; \big( (u-\tilde u) - P_\lambda (u-\tilde u)\big) \ra \,.$$
By Cauchy-Schwarz, we deduce the bound
\begin{equation}
\label{E:coer2}
\|u-\tilde u \|_{H_x^1}^2 \lesssim \la \mathcal{L}_\lambda(u-\tilde u), u-\tilde u \ra + (\|u-\tilde u\|_{H_x^1} + \|P_\lambda(u-\tilde u)\|_{H_x^1})\|P_\lambda(u-\tilde u)\|_{H_x^1} \,.
\end{equation}
Note that
\begin{equation}
\label{E:p-lambda}
P_\lambda(u-\tilde u) = - P_\lambda\tilde u =-(P_\lambda-P_{\tilde \lambda})\tilde u -P_{\tilde \lambda} \tilde u\,,
\end{equation}
Since $P_{\tilde \lambda} \circ e^{-it\mathcal{L}_{\tilde \lambda}} = e^{-it\mathcal{L}_{\tilde \lambda}}\circ P_{\tilde \lambda}$,
$$ P_{\tilde \lambda}\tilde u = e^{-it\mathcal{L}_{\tilde \lambda}} P_{\tilde \lambda} f .$$
This is just the evolution of the (generalized) kernel, and hence
$$\| P_{\tilde \lambda}\tilde u\|_{H_x^1} \leq t\|P_{\tilde \lambda} f\|_{H_x^1} = t\|(P_{\tilde \lambda} - P_\lambda)f\|_{H_x^1} \lesssim th^2\|f\|_{H_x^1}\,.$$
Similarly,
$$\|(P_\lambda-P_{\tilde \lambda})\tilde u\|_{H_x^1} \lesssim h^2 \|\tilde u\|_{H_x^1} \lesssim h^2( \|u-\tilde u\|_{H_x^1} + \|f\|_{H_x^1}) \,,$$
which, together with the previous bound, gives (see \eqref{E:p-lambda})
$$\|P_\lambda(u-\tilde u) \|_{H_x^1} \lesssim h^2\|u-\tilde u\|_{H_x^1} + (1+t)h^2\|f\|_{H_x^1}^2 \,.$$
Combining this with \eqref{E:coer2}, we obtain the bound (absorbing $ h^2\|u-\tilde u\|_{H_x^1}^2$ on the left side)
$$\|u-\tilde u \|_{H_x^1}^2 \lesssim \la \mathcal{L}_\lambda (u-\tilde u), u-\tilde u \ra + (1+t)h^2\|u-\tilde u\|_{H_x^1}\|f\|_{H_x^1} + (1+t)^2h^4\|f\|_{H_x^1}^2 \,.$$
Combining this with \eqref{E:coer3}, provided $th^2\ll 1$, we obtain the bound
$$\|u-\tilde u\|_{L_{[0,t]}^\infty H_x^1}^2 \lesssim (1+t)h^2\|u-\tilde u\|_{H_x^1}\|f\|_{H_x^1} + (1+t)^2h^4\|f\|_{H_x^1}^2 \,,$$
from which it follows that
$$\|u(t)-\tilde u(t)\|_{H_x^1} \lesssim (1+t)h^2 \|f\|_{H_x^1} \,,$$
which implies the estimate in the corollary.
\end{proof}

\section{The free case}
\label{fc}

We will discuss the case of $ q = 0 $ and the initial data close
to a stationary soliton $ \sech x $. Very precise information 
can in principle be obtained in this case using 
the inverse scattering method 
\cite{ZS72} -- see also \cite{DZ}, \cite{DIZ},\cite{FT}.
However we are not aware 
of any reference containing that information -- see \cite[Appendix B]{HMZ1}
for a discussion.

\subsection{Spectral theory of the linearized operator}
\label{sa}
The explicit spectral decomposition of the operator $ H_0 $ was 
discovered by Kaup \cite{Kaup76} (see also \cite{Ya} for a recent 
discussion and generalizations). We now
present it in a way which will make the spectral 
decomposition of $ H_q $  natural.

Spectral theory of operators of the form
\begin{equation}
\label{eq:form}  H = \frac 12 \begin{bmatrix} - \partial^2_x + 1 & \ \ \ 0 \\
\ \ \ 0 & \partial_x^2 - 1 \end{bmatrix} + 
\begin{bmatrix} \ V_1 & \ V_2 \\ -V_2 & -V_1 
\end{bmatrix} \,, 
\end{equation}
was studied systematically by Buslaev-Perelman \cite{BP} 
and Krieger-Schlag \cite{KS}. Despite the 
non-normality of $ H $ (if $ V_2 \neq 0 $) a spectral 
decomposition is available once the existence and properties of 
the four dimensional set of solutions to 
\[  H \psi  = ( k^2 + 1 ) \psi \]
is established. 
They are characterized by their behaviour as $ x \rightarrow \infty $:
\[ \psi ( x ) \sim e^{ \pm i k x } \begin{bmatrix} 1 \\ 0 \end{bmatrix}\,, \ \
 e^{ \pm \mu x } \begin{bmatrix} 0 \\ 1 \end{bmatrix}\,, \ \ 
\mu^2 = k^2 + 2 \,, \ \ k \neq 0 \,, \ \ \mu \neq 0 \,, \]
see \cite[Section  5]{KS} for a careful discussion. 

For the linearization of the cubic NLS the set of four solutions can be found 
{\em explicitly}\footnote{In Appendix \ref{ndkp} we show how this solution can be guessed by performing 
a simple numerical experiment even if, as we were, one is ignorant of 
the inverse scattering developments. We are grateful to Galina Perelman for
explaining to us the structure of solutions to linearized operators
in the completely integrable case.}
 and it is given by 
\begin{equation}
\label{eq:Kaup4} \{ \, \Psi_+(\cdot, k), \quad \Psi_+(\cdot, -k), \quad 
\Psi_-(\cdot, i\mu), \quad \Psi_-(\cdot, -i\mu) \, \}\,, \ \ 
k \neq 0 \,, \ \ \mu \neq 0 \,, 
\end{equation}
where
\begin{equation}
\label{eq:KaupP}
 \Psi_+ ( x , k ) =  \begin{bmatrix}
( \tanh x - i k )^2 \\ - \sech^2 x \end{bmatrix} e^{ ik x } \, , \quad
\Psi_- = \sigma_1 \Psi_+ \, ,\quad 
\mu = (k^2+2)^{1/2} \, .
\end{equation}

Since $\sigma_1 H_0 \sigma_1 = -H_0$, we have that
$$\{ \, \Psi_-(\cdot, k), \quad \Psi_-(\cdot, -k), \quad 
\Psi_+(\cdot, i\mu), \quad \Psi_+(\cdot, -i\mu) \, \}$$
is a basis for the solution space to $H_0\psi = -(k^2+1)\psi$.

Now let
\[  \varphi_+( x , k )  = \sigma_3 {\Psi_+ ( x , k ) }  \,, \ \
\varphi_- ( x, k )  = \sigma_1 \varphi_- ( x , k ) \,, \]
Then since $\sigma_3 H_0\sigma_3 = H_0^*$, we have that
$$\{ \, \varphi_+(\cdot, k), \quad \varphi_+(\cdot, -k), \quad \varphi_-(\cdot, i\mu), 
\quad \varphi_+(\cdot, -i\mu) \, \}$$
is a basis of the solution space to $H_0^*\psi = (k^2+1)\psi$.  Note that
$$\varphi_- = \sigma_1 \varphi_+ = \sigma_1\sigma_3 \Psi_+ = -\sigma_3\sigma_1\Psi_+ 
= -\sigma_3\Psi_-$$
Finally, using that $\sigma_1 H_0^* \sigma_1 = -H_0^*$, we obtain that 
$$\{ \, \varphi_-(\cdot, k), \quad \varphi_-(\cdot, -k), \quad \varphi_+(\cdot, i\mu), 
\quad \varphi_+(\cdot, -i\mu) \, \}$$
is a basis of the solution space to $H_0^*\psi = -(k^2+1)\psi$ for $k \neq 0$.

The eigenvalue $0$ corresponds to a generalized eigenspace 
$\operatorname{span}\{ \Psi_1, \Psi_2, \Psi_3, \Psi_4 \}$, to be described 
now.  Let $\eta(x)=\sech x$ and
$$e_1 = -\partial \qquad e_2 = ix \qquad e_3 = i \qquad e_4 =\partial x$$
Then $e_1$ (translation) and $e_2$ (Galilean) are symplectically dual and 
we have \eqref{E:kernel1}.
Let
$$\Psi_j = i^{j-1}U(e_j\cdot \eta)\,, $$
where $ U $ is given in \eqref{eq:Hql}.
Then
$$H_0\Psi_1 = H_0\Psi_3 = 0, \qquad H_0\Psi_2=\Psi_1, \quad H_0\Psi_4=\Psi_3$$
The generalized kernel of $ H_0^* $ is spanned by $ \varphi_j \defeq 
\sigma_3 
\overline 
\Psi_j $,
\[ H_0^* \varphi_1 = H_0^* \varphi_3 = 0 \,, \ H_0^* \varphi_2 = \varphi_1 \,, \ H_0^* \varphi_4 = 
 \varphi_3 \,.\]
Since $ U^* \sigma_3 U = \sigma_2 $ we also see that 
\[ \begin{split}
& \int_\RR \varphi_2 (x )^* \Psi_1 ( x ) dx = \int_\RR \varphi_1^* ( x ) \Psi_2 (x ) 
= 1 \,, \\
& \int_\RR \varphi_4 (x )^* \Psi_3 ( x ) dx = \int_\RR \varphi_3^* ( x ) \Psi_4 (x ) 
= -1 \,.
\end{split} \]

Finally we recall that
$H_0$ has a simple threshold resonance given by the explicit formula 
(see Chang-Gustafson-Nakanishi-Tsai \cite[\S 3.7]{CGNT})
\begin{equation}
\label{eq:ress}
\begin{bmatrix} \tanh^2 x \\ - \sech^2 x \end{bmatrix} \,,
\end{equation}
corresponding to $ k = 0 $.  
Following \cite{BP} and \cite[Definition 5.18]{KS} (note a slight change of
convention between this paper and \cite{KS}) we say
that $ H $ has a {\em resonance} at $ 1 $ if there
exists $ u \in L^\infty $ such that $ Hu = u $. The multiplicity
of a resonance is the number of independent solutions with these properties.
As we will recall below the maximum multiplicity is $ 2$. Here we include
eigenvalues as resonances: ``true'' resonances satisfy $ u \in L^\infty
\setminus L^2 $.

This definition is equivalent to the more general definition based on the
meromorphic continuation of the resolvent. The potential $ \sech^2 x $
is exponentially decaying and the resolvent of $ H_0 $ (the same operator
without the potential term),
$ R_0 ( z ) = ( H_0 - z )^{-1} $,
has a global meromorphic continuation to a
three sheeted Riemann surface, with poles at $ \pm 1 $. The resolvent
$ R ( z ) = ( H - z )^{-1} $ can then be continued from the
{\em physical plane},
$ \Sigma \defeq \CC \setminus (( - \infty , -1 ) \cup \{ 0 \}
\cup ( 1 , \infty ) ) $, to a neighbourhood of $ \Sigma $ on that
three sheeted Riemann surface. Near $ \pm 1 $ the resolvent is
meromorphic in $ \lambda$, $ z = \pm (1 + \lambda^2 )$.
The analysis outlined in \cite{BP} (see \cite[Lemma 5.2]{KS} and
\cite[Lemma 6.5]{KS} for detailed presentation)
can be used to show that the definition of
resonances as poles of the resolvent coincides with the definition above
given in terms of solutions.

Let $P_c$ denote the symplectic orthogonal projection onto the essential 
spectrum, which we define as $I-P_d$ where $P_d$ is the symplectic orthogonal 
projection onto the discrete spectral subspace $E_0$.  The $2\times 2$ matrix 
kernel $P_c(x,y)$ of $P_c$ is given by Kaup's formula \cite{Kaup76}
$$P_c(x,y) = \frac1{2\pi} \int_\mathbb{R}  \frac{1}{(1+k^2)^2} 
(\Psi_+(x,k)\varphi_+(y,k)^* + \Psi_-(x,k)\varphi_-(y,k)^* )\, dk$$
Once we know \eqref{eq:Kaup4}, this formula can also be derived by contour 
deformation and the fact that
$$\frac{1}{2\pi i} \int_{\Gamma} (H_0-z)^{-1} \, dz = \text{Id} \,$$
where $\Gamma$ is any contour that encloses the spectrum of $H_0$ -- see 
\cite[Lemma 6.8]{KS}. As claimed in \cite{Kaup} and \cite{Ya} it can 
also be checked by an explicit calculations of the integral.

We now put this into a form that is more consistent with one-dimensional 
scattering theory (see for instance \cite{TZ}) and connects the basis
with the basis of {\em scattering solutions} of \cite[\S 2.5.1]{BP} and
\cite[\S 6]{KS}. 

Let 
\begin{equation}
\label{E:v+} 
v_+(x,k) = \frac{1}{(1+i|k|)^2}\Psi_+(x,k) \,. \end{equation}
Then $H_0 v_+ = (1+k^2)v_+$ and
\begin{equation}
\label{E:v++}
v_+(x,k) \sim \begin{bmatrix} 1 \\ 0 \end{bmatrix} 
\begin{cases} e^{ikx} + R_+(k)e^{-ikx} & \text{as }x \to -\infty  \\   
T_+(k)e^{ikx} & \text{as }x\to +\infty \end{cases} \, \quad \text{for }k>0
\end{equation}
\begin{equation}
\label{E:v+-}
v_+(x,k) \sim \begin{bmatrix} 1 \\ 0 \end{bmatrix} 
\begin{cases} T_-(k)e^{ixk} & \text{as }x \to -\infty  \\  
e^{ikx}+R_-(k)e^{-ikx} & \text{as }x\to +\infty \end{cases} \, 
\quad \text{for }k<0
\end{equation}
with 
$$R_+(k)=0 \quad \text{and} \quad T_+(k)= \frac{(1-ik)^2}{(1+ik)^2}$$
$$R_-(k)=0 \quad \text{and} \quad T_-(k)=  \frac{(1+ik)^2}{(1-ik)^2}$$

Now let 
\begin{align*}
v_- \defeq \sigma_1 v_+  &\implies H_0v_- = -(1+k^2)v_+ \\
\tilde v_+ \defeq \sigma_3 v_+ &\implies H_0^* \tilde v_+ = (1+k^2)\tilde v_+ \\
\tilde v_- \defeq \sigma_1 \tilde v_+ &\implies H_0^* \tilde v_- 
= -(1+k^2)\tilde v_-
\end{align*}

Then
\begin{equation}
\label{eq:Pcv} P_c(x,y) = \frac{1}{2\pi} \int_\RR
(v_+(x,k)\tilde v_+(y,k)^* + v_-(x,k)\tilde v_-(y,k)^* ) \, dk\,, 
\end{equation}
and
\begin{equation}
\label{E:orth}
\frac1{2\pi} \int_\mathbb{R} \tilde v_\pm(x,k)^* v_\pm(x,k') \, dx 
= \delta(k-k'), \quad \frac1{2\pi} 
\int_\mathbb{R} \tilde v_\pm(x,k)^* v_\mp(x,k') \, dx =0
\end{equation}

\subsection{The free linearized propagator.}
\label{S:fprop}
It follows from \eqref{eq:Pcv} that 
the propagator $e^{-\frac12it H_0}P_c$ on the essential 
spectrum is represented by the Schwartz kernel 
$$e^{-\frac12it H_0}P_c(x,y) = \frac1{2\pi} \int_\mathbb{R} 
( e^{-\frac12 it(1+k^2)} v_+(x,k)\tilde v_+(y,k)^* 
+ e^{
\frac12 it(1+k^2)} v_-(x,k)\tilde v_-(y,k)^* )  dk \,.  $$
We will now study $ e^{-\frac12it H_0}P_c w_0 $ for $ w_0 $ 
appearing in Theorem \ref{T:npt}.

\begin{proposition}
\label{p:w0}
Suppose that $ w_0 \in {\mathcal S} ( \RR ) $ is real valued. Then 
\begin{gather}
\label{eq:lw0}
\begin{gathered}
e^{-\frac 12  i t H_0 } 
P_c w_0(x,t) = \frac1{\sqrt{2\pi}} \int_{-\infty}^{+\infty} (a(x,k) e^{-\frac12 i(k^2+1)t} + b(x,k) e^{\frac12 i(k^2+1)t})f(k) dk \,, \\  
f(k) = \frac{1}{\sqrt{2\pi} (1-i|k|)^2}\int_{-\infty}^{+\infty} (1+2ikt(x)-k^2t(x)^2)w_0(x) e^{-ikx} \, dx
\\
a(x,k)= \frac{ (t(x)-ik)^2}{(1+i|k|)^2}e^{ikx} \,, \ \ \ 
b(x,k)= \frac{-s(x)^2}{(1+i|k|)^2} e^{-ikx} \,, 
\end{gathered}
\end{gather}
where we used the notation 
$ t ( x )  = \tanh x $ and 
$ s ( x ) = \sech x $. 
Consequently,
\begin{equation}
\label{eq:lw01}
e^{-\frac12 i t H_0 } 
P_c w_0(0,t) =  -\frac{1}{\sqrt{ 2 \pi  t} } e^{\frac12it}e^{i\frac{\pi}{4}} \int_\RR w_0 ( x ) dx 
 + {\mathcal O} ( t^{-3/2})  \,. \end{equation}
\end{proposition}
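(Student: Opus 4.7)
The plan is to insert the Kaup spectral representation \eqref{eq:Pcv} of $P_c$ into $e^{-\frac{1}{2}itH_0}P_c w_0$, evaluate the resulting inner products against $\vec w_0 = [w_0,w_0]^T$ using the explicit formulas for $v_\pm(y,k)$, and then extract the asymptotic at $x=0$ via stationary phase.

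First I would use $H_0 v_\pm(\cdot,k) = \pm(1+k^2) v_\pm(\cdot,k)$ together with \eqref{eq:Pcv} to write
\[
e^{-\frac12 i t H_0} P_c(x,y) = \frac{1}{2\pi}\int_\RR \bigl( e^{-\frac12 it(1+k^2)} v_+(x,k) \tilde v_+(y,k)^* + e^{\frac12 it(1+k^2)} v_-(x,k) \tilde v_-(y,k)^* \bigr)\, dk,
\]
and apply this kernel to $[w_0(y),w_0(y)]^T$, the vector representing the real function $w_0$ in the $[u,\bar u]^T$ identification. Using $\tilde v_+ = \sigma_3 v_+$, $\tilde v_- = -\sigma_3 v_-$, the Kaup formulas \eqref{E:v+} and \eqref{eq:KaupP}, and $\tanh^2 y + \sech^2 y = 1$, both scalars $\tilde v_\pm(y,k)^*[w_0,w_0]^T$ collapse to the same function of $(y,k)$ of the form $(1-i|k|)^{-2}\,P(k,t(y))\,w_0(y)\,e^{-iky}$. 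Extracting the first component of the resulting $2$-vector using $v_+^{(1)} = (t(x)-ik)^2 e^{ikx}/(1+i|k|)^2$ and $v_-^{(1)} = -s(x)^2 e^{ikx}/(1+i|k|)^2$, I would then apply the change of variable $k \mapsto -k$ in the $v_-$ piece to convert $e^{ikx}$ into $e^{-ikx}$ and match $b(x,k)$; since $|k|$, $k^2$, and the phase $(1+k^2)/2$ are even in $k$, and since $w_0$ is even (the hypothesis available in the application to Theorem \ref{T:main}), this substitution leaves a single common factor $f(k)$ in front of both terms, producing \eqref{eq:lw0}.

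For the asymptotic \eqref{eq:lw01} I would set $x=0$; then $t(0)=0$ and $s(0)=1$ give $a(0,k) = -k^2/(1+i|k|)^2$ and $b(0,k) = -1/(1+i|k|)^2$. The phase $\phi(k) = (1+k^2)/2$ has a unique nondegenerate critical point at $k_0=0$, with $\phi(0) = \tfrac12$ and $\phi''(0) = 1$. Since $b(0,0) = -1 \neq 0$, standard stationary phase applied to the $b$-integral yields
\[
\frac{1}{\sqrt{2\pi}}\int b(0,k)\, f(k)\, e^{\frac12 i t(1+k^2)}\, dk = b(0,0)\, f(0)\, \frac{e^{it/2}\, e^{i\pi/4}}{\sqrt{t}} + O(t^{-3/2}),
\]
and $b(0,0)=-1$ together with $f(0) = (2\pi)^{-1/2}\int_\RR w_0$ reproduces the leading term in \eqref{eq:lw01}. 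The $a$-integral contributes only $O(t^{-3/2})$: since $a(0,k)$ vanishes to order two at the critical point, writing $k\, e^{-\frac12 it(1+k^2)} = (it)^{-1}\partial_k e^{-\frac12 it(1+k^2)}$ and integrating by parts twice gains a factor $t^{-1}$ before applying stationary phase to the remaining $O(1)$ amplitude. The principal obstacle is the algebraic bookkeeping that combines the $v_+$ and $v_-$ contributions into a single expression with common $f(k)$, which requires the evenness of $w_0$ and careful tracking of the factors $(1\pm i|k|)^2$ and of signs under $k\mapsto -k$; once \eqref{eq:lw0} is in hand, the stationary phase computation is routine.
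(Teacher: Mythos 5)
Your proposal is correct and follows essentially the same route as the paper: the paper also expands $e^{-\frac12 itH_0}P_c$ in the Kaup distorted Fourier basis (phrased via $P_ce^{-\frac12 itH_0}=V_+M(t)V_+^*\sigma_3+\sigma_1V_+M(-t)V_+^*\sigma_3\sigma_1$), identifies the common scalar $f=V_+^*[1,-1]^Tw_0$, and then applies stationary phase at $x=0$, where $a(0,k)=-k^2/(1+i|k|)^2$ kills the $e^{-\frac12 it(k^2+1)}$ contribution to leading order. The only cosmetic difference is your $k\mapsto -k$ substitution invoking evenness of $w_0$ to force $b$ into the form $e^{-ikx}$; the paper's computation produces the second term directly from the second component of $v_+$ (so no evenness is needed, consistent with the proposition assuming only that $w_0$ is real), and in any case this does not affect the evaluation at $x=0$.
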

\begin{proof}
Let 
\begin{align*}
&V_+g(x) = \frac1{\sqrt{2\pi}} \int_k v_+(x,k) g(k) \, dk \,,
\end{align*}
be the ``inverse distorted Fourier transform,'' which gives 
\begin{align*}
&V_+^*f(k)=\frac1{\sqrt{2\pi}}\int_x v_+^*(x,k)f(x) \, dx \,,
\end{align*}
the ``distorted Fourier transform,'' associated to the operator $H_0$.  
With this notation, we have
$$P_ce^{-\frac12itH} = V_+M(t)V_+^*\sigma_3 + \sigma_1 V_+M(-t)V_+^*\sigma_3\sigma_1, \quad M(t)f(k) \defeq e^{-\frac12i(k^2+1)t}f(k)$$
Consequently, for $w_0$ real, 
\begin{align*}
e^{-\frac12 i t H_0 } P_c w_0 & 
= \begin{bmatrix} 1 & 0 \end{bmatrix}
 \;  P_c e^{-\frac12 itH} \begin{bmatrix} 1 \\ 1 \end{bmatrix} w_0 \\
&= \begin{bmatrix} 1 & 0 \end{bmatrix} 
 \; (V_+M(t)V_+^* \sigma_3 + \sigma_1 V_+ M(-t)V_+^*\sigma_3\sigma_1) \begin{bmatrix} 1 \\ 1\end{bmatrix} w_0 \\
&= \begin{bmatrix} 1 & 0 \end{bmatrix} \; (V_+ M(t) + \sigma_1 V_+M(-t)) V_+^* \begin{bmatrix} 1 \\ -1\end{bmatrix} w_0
\end{align*}
We write the function $v_+(x,k)$ given by \eqref{E:v+} 
as
$$v_+(x,k) = \begin{bmatrix} a(x,k) \\ b(x,k) \end{bmatrix} e^{ikx} \; , \qquad \begin{aligned} 
a(x,k)= \frac{ (t(x)-ik)^2}{(1+i|k|)^2}e^{ikx} \\
b(x,k)= \frac{-s(x)^2}{(1+i|k|)^2} e^{-ikx}
\end{aligned}
$$
and define
\begin{equation}
\label{eq:deff}
f(k) \defeq \left(V_+^* \begin{bmatrix} 1 \\ -1 \end{bmatrix} w_0 \right)(k) 
= \frac1{\sqrt{2\pi}} \int_x (\bar a(x,k) - \bar b(x,k)) w_0(x) \, dx \,. 
\end{equation}
Then, 
\begin{align*}
e^{-\frac12 it H} P_c w(0,t) &= \frac1{\sqrt{2\pi}} \int (a(0,k)e^{-\frac12 i(k^2+1)t} + b(0,k)e^{\frac12i(k^2+1)t})f(k) \, dk \\
&= \frac{1}{\sqrt{2\pi}} \int \frac{-k^2e^{-\frac12i(k^2+1)}-e^{\frac12i(k^2+1)t}}{(1+i|k|)^2} \, f(k) \, dk\\
& =  -\frac{1}{\sqrt {t} } e^{\frac12it}e^{i\frac{\pi}{4}}f(0)
 + {\mathcal O} ( t^{-3/2})  \,, 
\end{align*}
by the method of stationary phase.
\end{proof}

\begin{figure}
\begin{center}
\includegraphics[width=6in]{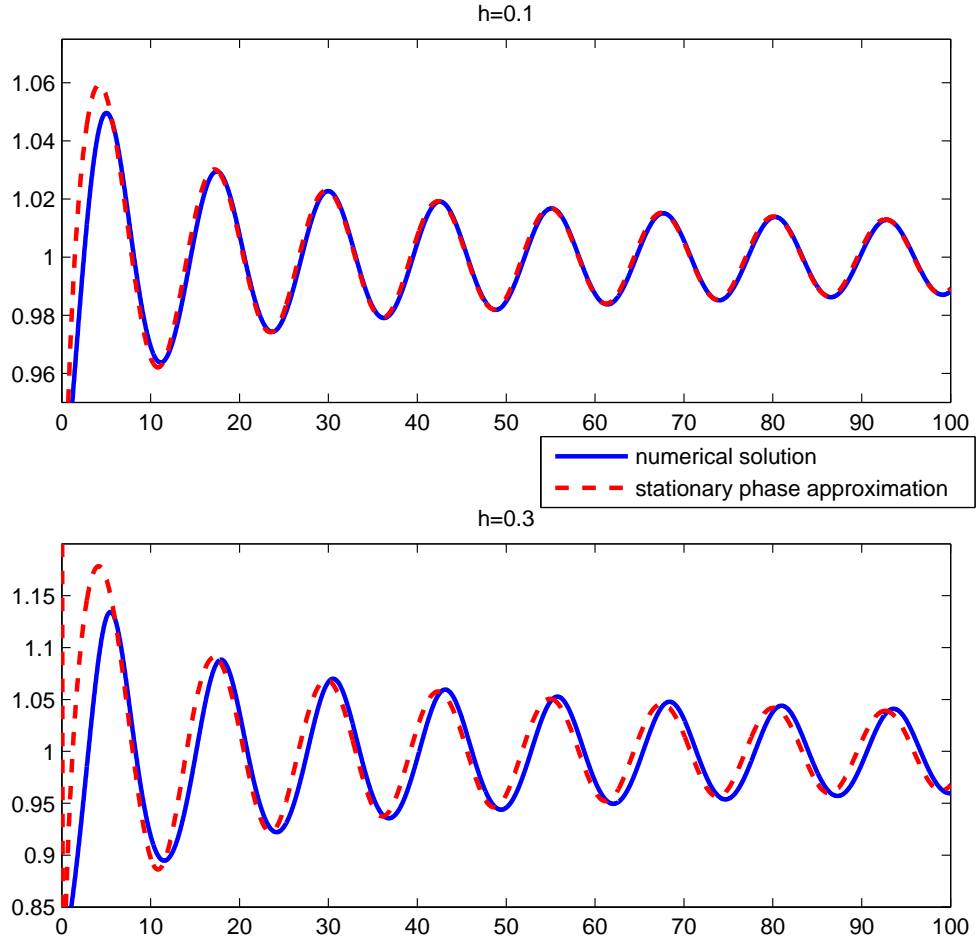}
\end{center}
\caption{Breathing patterns for $ q = 0 $ and
$ u( x , 0 ) = ( 1 + h )/(1+2h) \sech (x/(1+2h)  $ (the rescaling 
is rigged so that the resulting ground state is simply $ \sech x $): we 
show $ |u ( 0 , t )| $ and the asymptotic 
prediction 
for $ h = 0.1 $, $ h = 0.3 $. The agreement is
remarkably good for times much longer than given in the theoretical 
result. A more precise statement can be obtained using the inverse
scattering method.
}
\label{f:2}
\end{figure}

\subsection{Nonlinear perturbation theory in the free case}
\label{nlpfree}
Let us take a particular example:  
$$w_0(x) = \frac{1+h}{1+2h}\sech\left( \frac{x}{1+2h} \right) - \sech x $$
(the choice of scaling was made so that $ \sech x $ is selected as the 
nonlinear ground state by Theorem \ref{T:npt}).
In this case we compute
$ f(0) = h \sqrt {{\pi}/{2}} $
which gives
$$e^{-\frac 12 i t  H_0 } P_c w_0(0,t) = 
- \sqrt{\frac{\pi}{2}} \, e^{\frac14i\pi} \, \frac{1}{\sqrt{t}} \; e^{\frac12it}h 
+ {\mathcal O} \left( \frac h { t^{3/2} } \right) $$
We also have $P_3 w_0 = \omega(w_0,v_4)v_3 = 0$ and $P_4 w_0 = \omega(w_0,v_3)v_4 \approx 0.4\cdot h^2 v_4$.  Thus,  for $t \gg 1$,  we have
$$ e^{-\frac 12 i t  H_0 } P_c w_0(0,t) =  0.2\cdot i h^2  t - 
 \, e^{\frac14i\pi} \, \sqrt{\frac{\pi}{2t}} \; e^{\frac12it}h + {\mathcal O} (h^2) + {\mathcal O} 
\left( \frac h { t^{3/2} } \right)  $$
We can now apply Theorem \ref{T:npt} to see that the solution of 
\[ i u_t = - u_{xx}/2 - |u|^2 u \,, \ \ u ( x , 0 ) = 
 \frac{1+h}{1+2h}\sech\left( \frac{x}{1+2h} \right) \,, \]
satisfies
\begin{equation}
\label{eq:u0tf}
e^{-i t/2}  u ( 0 , t ) = 
1 - e^{\frac14i\pi} \, \sqrt{\frac{\pi}{2t}} \; e^{\frac12it}h + {\mathcal O} (t^2 h^2) 
+ {\mathcal O} 
\left( \frac h { t^{3/2} } \right) \,, \ \ 1 \ll t \ll h^{-1/2} \,. 
\end{equation}
Figure \ref{f:2} compares this asymptotic expression with the numerical solution.

\medskip

\noindent
{\bf Remark.}
We should stress that a more precise result valid for all values of $ h $ 
can in principle be obtained using the inverse scattering method -- see
\cite[Appendix B]{HMZ1} and references given there. It would be very interesting
to compare those exact expressions with our rough asymptotics. The results
of \cite[Appendix B]{HMZ1} show already that \eqref{eq:u0tf} can be corrected
since we know that
\[    u ( x , t ) = e^{i \varphi ( h )} \sech x + {\mathcal O}_{L^\infty } \left( 
\frac 1 { \sqrt{t} } \right) \,, \]
where 
\[ \begin{split}  \varphi  ( h )  & =  \int_0^\infty 
\log\left( 1 + \frac{\sin^2\pi h  }
{\cosh^2\pi \zeta} \right) \frac{\zeta}{\zeta^2+(1 + 2 h )^2} \, d\zeta 
\\ & 
\simeq \pi^2 h^2 \int_0^\infty \frac{\zeta \sech^2 \pi \zeta} { 1 + \zeta^2 } d\zeta 
\simeq 0.6 \; h^2 
\,, \ \ h \longrightarrow 0 
\end{split}\]
Hence, in the application of Theorem \ref{T:npt} the error terms $ {\mathcal O} 
( h^2) $ in 
\eqref{eq:u0tf} are optimal.

\section{Small  external delta potential}
\label{sedp}

In this section we will use Theorem \ref{T:npt} to prove Theorem \ref{T:main} stated
in the introduction. For that we will follow the same path as in \S \ref{fc}
and provide a spectral decomposition of the linearized operator with the 
$ \delta_0 $ potential. The scattering coefficients, $ R_\pm $ and $ T $, 
appearing in \eqref{E:v++} and \eqref{E:v+-} are now more singular which 
makes the asymptotic analysis more complicated.

\subsection{Basis of solutions to $ H_q \psi = \pm ( k^2 + 1 ) \psi $.}
\label{S:dfb}
Using the Kaup basis \eqref{eq:Kaup4} for the free problem we find 
a complete set of solutions $\psi$ to the equation 
$H_q \psi = (k^2+1)\psi$, where
$$H_q = 
\begin{bmatrix}
-\partial_x^2 + 1  & 0\\
0 & \partial_x^2 - 1
\end{bmatrix}
+2\sech^2(x+\sgn(x)\theta)\begin{bmatrix}
-2 & -1\\
1 & 2
\end{bmatrix}
-2q
\begin{bmatrix}
\delta_0 & 0 \\
0 & -\delta_0
\end{bmatrix} \,, 
$$ 
with $\theta = \tanh^{-1}q$, see \eqref{eq:Hql}.

Let  $s=\sech(x+\sgn(x)\theta)$, $t=\tanh(x+\sgn(x)\theta)$, and $\mu = (k^2+2)^{1/2}>0$.  With unknown coefficients $A(k)$, $B(k)$, $C(k)$, and $D(k)$, we look for $\psi(x,k)$ of the form
\begin{equation}
\label{eq:psi0}
\psi =
\begin{aligned}[t]
& \left( 
\begin{bmatrix}
(t-ik)^2 \\ -s^2 
\end{bmatrix} e^{ik(x-\theta)}
+
A\begin{bmatrix}
-s^2 \\ (t-\mu)^2
\end{bmatrix} e^{\mu (x-\theta)}
\right) x_-^0 \\ 
& +\left(
B\begin{bmatrix}
(t-ik)^2 \\ -s^2
\end{bmatrix} e^{ik(x+\theta)}
+C\begin{bmatrix}
(t+ik)^2 \\ -s^2 
\end{bmatrix} e^{-ik(x+\theta)}
+D\begin{bmatrix}
-s^2 \\ (t+\mu)^2
\end{bmatrix} e^{-\mu (x+\theta)}
\right) x_+^0
\end{aligned}
\end{equation}

For the unknowns $A(k)$, $B(k)$, $C(k)$, and $D(k)$, two equations are obtained by requiring continuity at $x=0$ and two more equations are obtained by requiring the appropriate jump condition in the derivatives at $x=0$.  This gives rise to the 4$\times$4 system analysed in detail in Appendix \ref{AA}.  

By comparing $\psi(x,-k)$ and $\overline{\psi(x,k)}$ asymptotically as $x\to -\infty$, and noting that both solve $H_q\psi=(1+k^2)\psi$, we find that $\psi(x,-k)=\overline{\psi(x,k)}$ and hence $A(-k)=\overline{A(k)}$, and similarly for $B$, $C$, and $D$.

Here is a typical consequence of the formulas from Appendix \ref{AA}.  An eigenvalue at $ 1 + k^2 $
comes from finding a solution to 
\[  B (k, q ) =0 \,, \]
with $ \Im k < 0 $.  More generally, a solution will give a resonance or
a pole of the resolvent. The following lemma is derived from the computations in Appendix \ref{AA}.

\begin{lemma}
\label{l:reig}
For $q<0$, $ 0 < |q| \ll 1 $ the operator $ H_q $
has 
one eigenvalue, $ \mu^\pm_q $,  near $ \pm 1 $,
\begin{equation}
\label{eq:reig+}
\mu_q^\pm =  1 - q^2 \,. 
\end{equation}
The corresponding eigenfuctions $ u_q^\pm $ can be chosen to be real and
satisfy $ \sigma_1 u_q^\pm = u_q^{\mp} $, where  
$$
u_q^+(x) = |q|^{1/2}\begin{bmatrix}
(\tanh(|x|+\theta)-q)^2 \\
-\sech^2(|x|+\theta)
\end{bmatrix}
e^{q(|x|+\theta)} \,, \ \ \theta = \tanh^{-1} q \,. 
$$
Consequently,
\begin{equation}
\label{eq:reig1}
  | u_q^\pm ( x ) | \leq C |q|^{\frac12} e^{-|q x|} \,, \ \   \int u_q^\pm ( x)^* 
\, \sigma_3 u_q^\pm (x ) \, dx = 1 \,.
\end{equation}
For $ 0< q \ll 1 $ the operator $ H_q $ has no eigenfunctions
near $ \pm 1 $ and the thresholds $ \pm 1 $ are {\em not} resonances.
\end{lemma}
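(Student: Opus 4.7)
The goal is to find eigenfunctions of $H_q$ near the continuous-spectrum threshold $+1$ for small $|q|$, with the $-1$ case following via the symmetry $\sigma_1 H_q \sigma_1 = -H_q$ from \eqref{eq:Hsym}. The strategy is to specialise the four-dimensional solution space \eqref{eq:psi0} at $k = iq$, which yields the candidate eigenvalue $1+k^2 = 1-q^2$, and then to verify directly that the claimed $u_q^+$ satisfies the ODE away from $0$, continuity at $0$, and the $\delta_0$-jump condition. The construction is guided by two algebraic coincidences: at $x = 0$, $\tanh(|x|+\theta) = \tanh\theta = q$, so with $k = iq$ both factors $(t - ik)^2|_{x=0^-} = (\tanh(-\theta) + q)^2$ and $(t + ik)^2|_{x=0^+} = (\tanh\theta - q)^2$ vanish. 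This suggests taking an ansatz consisting only of the $(t-ik)^2,-s^2$ mode on the left and the $(t+ik)^2,-s^2$ mode on the right of \eqref{eq:psi0}, i.e.\ $A = B = D = 0$, and the resulting function is automatically even and agrees with the stated formula.

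The verification has three pieces. For the ODE, on each half-line $H_q$ is the free Kaup operator in the translated variable $x + \mathrm{sgn}(x)\theta$, so with $k = iq$ the explicit solutions \eqref{eq:KaupP} give the eigenvalue equation at $1 + k^2$. Continuity at $0$ is automatic from the evenness of the formula. For the jump $u_j'(0^+) - u_j'(0^-) = -2q\,u_j(0)$ (the two-component analogue of \eqref{E:jump1}), evenness gives $u_j'(0^+) = -u_j'(0^-)$, and direct differentiation of the formula yields $u_1'(0^+) = 0$ (matching $u_1(0) = 0$) and $u_2'(0^+) = |q|^{1/2}(1-q^2)q\,e^{q\theta} = -q\,u_2(0)$, as required. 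The pointwise bound $|u_q^\pm(x)| \lesssim |q|^{1/2} e^{-|qx|}$ is immediate from the formula. For the normalisation, $\int(|u_1|^2 - |u_2|^2)\,dx$ reduces by evenness and the substitution $y = |x|+\theta$ to $2|q|\int_\theta^\infty [(\tanh y - q)^4 - \sech^4 y]\,e^{2qy}\,dy$; the integrand admits a closed-form antiderivative of the shape $P(\tanh y, \sech y)\,e^{2qy}$, and evaluating at the endpoints using $\tanh\theta = q$, $\sech^2\theta = 1 - q^2$ gives exactly $1$, which is how the prefactor $|q|^{1/2}$ is fixed.

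For $q > 0$, the same explicit formula produces a function that \emph{grows} like $e^{q|x|}$, so it is neither an $L^2$ eigenfunction nor a bounded resonance. Ruling out any \emph{other} eigenvalue or threshold resonance near $\pm 1$ for small $q > 0$ requires the full matching matrix of Appendix~\ref{AA}: the coefficient $B(k,q)$ detecting bound states is jointly analytic in $(k, q)$ near $(0, 0)$, and the previous step locates the relevant branch of its zero set as the curve $\{k = iq\}$, which crosses into the wrong half-plane $\Im k > 0$ when $q > 0$. The absence of a threshold resonance at $k = 0$ is checked by showing directly that Kaup's free resonance \eqref{eq:ress} fails the perturbed jump condition unless $q = 0$, and that no polynomial-growth companion can be matched across $x = 0$ for $q > 0$ small. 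The main obstacle lies precisely in this last step: at $k = 0$ the four Kaup modes become degenerate, so the clean ansatz strategy of the $q < 0$ case is unavailable, and one must engage directly with the most singular behaviour of the matching matrix of Appendix~\ref{AA} to rule out threshold resonances.
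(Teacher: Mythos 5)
Your construction for $q<0$ is essentially the paper's: the paper solves the $4\times4$ matching system of Appendix~\ref{AA} in closed form and observes that at $k=iq$ one has $A=B=D=0$, $C=1$, which is exactly your ansatz; your direct verification of the ODE on each half-line, continuity at $0$, and the jump conditions \eqref{E:compat2} is correct and reproduces the stated $u_q^+$, with decay precisely when $q<0$. One small caveat on the normalisation: the antiderivative you invoke does exist (take $P(T)e^{2qy}$ with $P(T)=2qT^2-2(1+q^2)T+(1+q^2)^2/(2q)$, $T=\tanh y$), but evaluating at the endpoints gives
\[
\int (u_q^+)^*\,\sigma_3\, u_q^+ \,dx \;=\; (1-q^2)^2\,e^{2q\tanh^{-1}q}\;=\;1+\mathcal{O}(q^4)\,,
\]
not exactly $1$; this is a (harmless) imprecision shared with the statement of the lemma itself, but your assertion that the evaluation ``gives exactly $1$'' is not literally correct.

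The step you flag as the main obstacle --- ruling out a threshold resonance at $k=0$ for $q>0$ --- is exactly where the paper abandons the ansatz strategy and leans on the exact formulas of Appendix~\ref{AA}, so the degeneration of the Kaup basis at $k=0$ is not actually an obstruction once the matching system has been solved in closed form. Concretely: the numerator of $B$ factors as $(k-iq)v(k)w(k)$ with the roots of $v$, $w$ located near $k=\mp i$, so the only zero of $B$ near the threshold is $k=iq$, which lies in the half-plane $\Im k>0$ when $q>0$ and hence gives no eigenvalue; and $1/B(k)$ carries an explicit factor of $k$ in its numerator while its denominator is nonvanishing at $k=0$ for $q\neq0$, so $1/B(0)=0$ and $C(k)/B(k)\to -1$ as $k\to0$ --- the generic non-resonant behaviour of the transmission and reflection coefficients, which is what rules out a bounded solution at the threshold. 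So your outline is sound, but the missing last step is not a new idea to be found; it is read off from the same closed-form expressions for $A,B,C,D$ that you have already put in place (and is recorded quantitatively in Lemma~\ref{L:asymp}).
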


\noindent
{\bf Remark.} The normalization of $ u_q^\pm $ is consistent with the 
spectral decomposition of $ H_q $ -- see \S \ref{ptm}. 

\medskip

We next analyse what happens when $ k = - i $ and $ \mu = 1 $. 
Since explicit formul{\ae} in that 
case do not play a r\^ole in our analysis, the spectrum of $ H_q $ (and equivalently 
of $ F_q = -i {\mathcal L}_q $) near
zero is analyzed by more general methods in Appendix \ref{S:pnz}. There
we proof the following lemma:

\begin{lemma}
\label{l:B}
For $ 0 < |q| \ll 1 $ the generalized 
kernel of $ F_q $ is given by $ \{ i v_1  , \partial_\lambda v_\lambda |_{\lambda = 1 } 
 \} $, 
\[  F_q ( i v_1  ) = 0 \,, \ \  F_q (  \partial_\lambda v_\lambda |_{\lambda = 1 } ) =
i v_1  \,. \]
In a neighbourhood of $ 0 $, $ F_q $ has two eigenvalues
\[ \lambda^\pm_q = \left\{ \begin{array}{ll} \pm  q^\frac12 + {\mathcal 
O} ( q^{3/2} )  & q >  0 \,, \\
 \pm  i |q|^\frac12 + {\mathcal O} ( |q|^{3/2} )  & q < 0 \,. \end{array}
\right.
\]
The two eigenfuctions, $ w_q^\pm $, are {\em odd}, and satisfy
$ \sigma_3 w_{q}^{\pm } = w_{q}^{\mp} $.
\end{lemma}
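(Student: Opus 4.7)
The plan is to combine two symmetry computations with isolated-spectrum perturbation theory around $q=0$, and then peel off a two-dimensional Jordan block living in the odd subspace. The first ingredient is a direct verification of the generalized-kernel pair $(v_3,v_4)$: phase invariance of \eqref{eq:nls} gives $L_{q-}v_1=0$, hence $F_q(iv_1)=0$; and differentiating the ground-state equation $\mathcal{E}_{q,\lambda}'(v_\lambda)=0$ in $\lambda$ at $\lambda=1$ yields $L_{q+}(\partial_\lambda v_\lambda|_{\lambda=1})=-v_1$, which is equivalent to $F_q v_4 = v_3$ after multiplying by $-i$. This places $\operatorname{span}\{v_3,v_4\}$ in the generalized kernel with the claimed nilpotent action.

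The second ingredient is Kato's analytic perturbation theory for isolated spectral subsets. At $q=0$ the operator $F_0$ has an isolated eigenvalue at $0$ of algebraic multiplicity $4$, namely the generalized eigenspace $\mathfrak{g}\cdot\sech$ from \S\ref{sgk}. The perturbation $F_q-F_0=-i(\mathcal{L}_q-\mathcal{L}_0)$ is relatively small: its distributional piece $-q\delta_0$ is already accommodated by the domain structure \eqref{E:DL}, and its smooth piece from $v_{1,q}^2-\sech^2=O(q)$ is bounded in operator norm. Hence the Riesz projector
\begin{equation*}
\Pi_q=\frac{1}{2\pi i}\oint_{|\zeta|=r}(\zeta-F_q)^{-1}\,d\zeta
\end{equation*}
has rank $4$ for $|q|$ small. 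Reflection $u(x)\mapsto u(-x)$ commutes with $F_q$, so $\operatorname{Ran}\Pi_q=V_q^{\mathrm{even}}\oplus V_q^{\mathrm{odd}}$ with each summand $2$-dimensional. Because $v_3$ and $v_4$ are even they exhaust $V_q^{\mathrm{even}}$, and the remaining eigenvalues $\lambda_q^\pm$ live in $V_q^{\mathrm{odd}}$. The Hamiltonian (symplectic) structure of $F_q$ forces $\sigma(F_q)$ to be symmetric about $0$, so $\lambda_q^-=-\lambda_q^+$ and it suffices to determine $(\lambda_q^+)^2$.

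The third ingredient is a Lyapunov--Schmidt expansion on $V_q^{\mathrm{odd}}$. At $q=0$ this subspace is spanned by $e_1\cdot\sech$ and $e_2\cdot\sech$ with the Jordan action $F_0(e_1\cdot\sech)=0$, $F_0(e_2\cdot\sech)=-e_1\cdot\sech$. Using the symplectic pairing \eqref{eq:omegae} to dualize, the restriction of $F_q$ takes the form
\begin{equation*}
M_q=\begin{pmatrix}0 & -1 \\ 0 & 0\end{pmatrix}+q\,M_1+O(q^{3/2}),
\end{equation*}
with zero trace, so $(\lambda_q^+)^2=-\det M_q = q\,(M_1)_{21}+O(q^2)$. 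The entry $(M_1)_{21}$ is evaluated as a symplectic pairing involving $q^{-1}(F_q-F_0)(e_1\cdot\sech)$ and $e_1\cdot\sech$: the $-q\delta_0$ contribution vanishes because $e_1\cdot\sech=\sech x\tanh x$ vanishes at $x=0$, while the smooth piece from $v_{1,q}^2-\sech^2=-2q\sech^2|x|\tanh|x|+O(q^2)$ reduces, after $u=\tanh x$, to the elementary integral $\int_0^\infty\sech^4 x\,\tanh^3 x\,dx=\tfrac{1}{12}$, with prefactors that combine to give $(\lambda_q^+)^2=q+O(q^2)$, hence the two sign regimes in the statement. Finally, $\sigma_3 w_q^\pm=w_q^\mp$ follows from the conjugation rule $\sigma_3 H_q\sigma_3=H_q^*$ in \eqref{eq:Hsym}: applying $\sigma_3$ and complex conjugation to the eigenvector equation for $H_q$ swaps an eigenvalue with its conjugate, which pairs $w_q^+$ with $w_q^-$ in both the real ($q>0$) and imaginary ($q<0$) regimes.

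The main obstacle is the third step. The Jordan structure on $V_q^{\mathrm{odd}}$ rules out naive Rayleigh--Schr\"odinger perturbation and forces a Puiseux-type expansion with the $\sqrt{q}$ scaling. The perturbation $F_q-F_0$ mixes a distributional component with a smooth one from the $q$-shift of the ground-state profile, and one must verify that the former is killed by the vanishing of the translation mode at $x=0$ while the latter survives to produce exactly the constant yielding the clean relation $(\lambda_q^+)^2=q$. Careful bookkeeping of signs among $\omega$, $\mathcal{L}_q$, and $F_q=-i\mathcal{L}_q$ is the principal burden of the computation.
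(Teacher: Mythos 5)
Your argument is sound in substance and lands on the same structural facts as the paper, but it is packaged differently. The paper (Appendix~\ref{S:pnz}) sets up a Grushin problem on the full four--dimensional generalized kernel of $F_0$, computes the $2\times 2$ effective matrix $E_{-+}^q(z)=E_{-+}^0-E_-^0(F_q-F_0)E_+^0+\mathcal{O}(q^2)$, and reads off the eigenvalues from $\det E_{-+}^q(z)=0$; the oddness of $w_q^\pm$ is then deduced \emph{a posteriori} from the leading-order form of the residue of $(F_q-z)^{-1}$ via the Schur complement formula. You instead take the rank-$4$ Riesz projector, split its range by parity, observe that the even half is exhausted by $\{v_3,v_4\}$ (so the nonzero eigenvalues and their eigenfunctions are forced into the odd half), and then run a trace-zero/determinant argument on the odd $2\times2$ block. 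This is essentially the same first-order reduction --- indeed the paper's $E_{-+}^q$ is block-diagonal precisely because of the parity you exploit, and your pairing $\langle (\mathcal{L}_q-\mathcal{L}_0)\eta',\eta'\rangle$ is exactly the coefficient $\beta=6\int\eta^4\sigma^3\sgn x=1$ in \eqref{eq:Eq} --- but your route makes the oddness of $w_q^\pm$ a consequence of a dimension count rather than of an explicit formula, which is cleaner. The only technical point to be honest about (the paper is equally terse here) is that $F_q-F_0$ contains $q\delta_0$ and the domains \eqref{E:DL} depend on $q$, so the continuity of the Riesz projector should be phrased via quadratic forms or resolvent differences; and in your determinant identity note that $\det\bigl(\begin{smallmatrix}0&-1\\0&0\end{smallmatrix}+qM_1\bigr)=+q(M_1)_{21}+\mathcal{O}(q^2)$, so the sign you need from the pairing must come out as $(M_1)_{21}=-1$; your final value $(\lambda_q^+)^2=q$ is correct.

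The one genuine error is in the last step. The relation $\sigma_3 H_q\sigma_3=H_q^*$ from \eqref{eq:Hsym} (which lives in the $[u,\bar u]$ representation) only maps eigenvectors of $H_q$ to eigenvectors of the \emph{adjoint} with the \emph{same} eigenvalue, and combining it with complex conjugation pairs $\lambda$ with $\bar\lambda$; for $q>0$ the eigenvalues $\pm q^{1/2}$ are real, so this produces nothing and certainly does not give $\sigma_3 w_q^+=w_q^-$. The symmetry you actually need is the anticommutation $\sigma_3 F_q\sigma_3=-F_q$ in the $(\Re,\Im)$ representation (equivalently $\sigma_1 H_q\sigma_1=-H_q$, the first identity in \eqref{eq:Hsym}, transported by $U$ since $U^*\sigma_1U=\sigma_3$): it sends the $\lambda$-eigenspace of $F_q$ to the $(-\lambda)$-eigenspace, and since these are one-dimensional it yields $\sigma_3 w_q^\pm=c\,w_q^\mp$, with $c=1$ after normalization. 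This is the identity the paper uses, and with it your argument closes; as written, your citation of the adjoint symmetry does not.
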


\begin{remark}
Note that $H_q=2F_q$, and thus the eigenvalues of $H_q$ occur at $2\lambda_q^\pm$.
\end{remark}

\begin{figure}
\begin{center}
\includegraphics[width=6in]{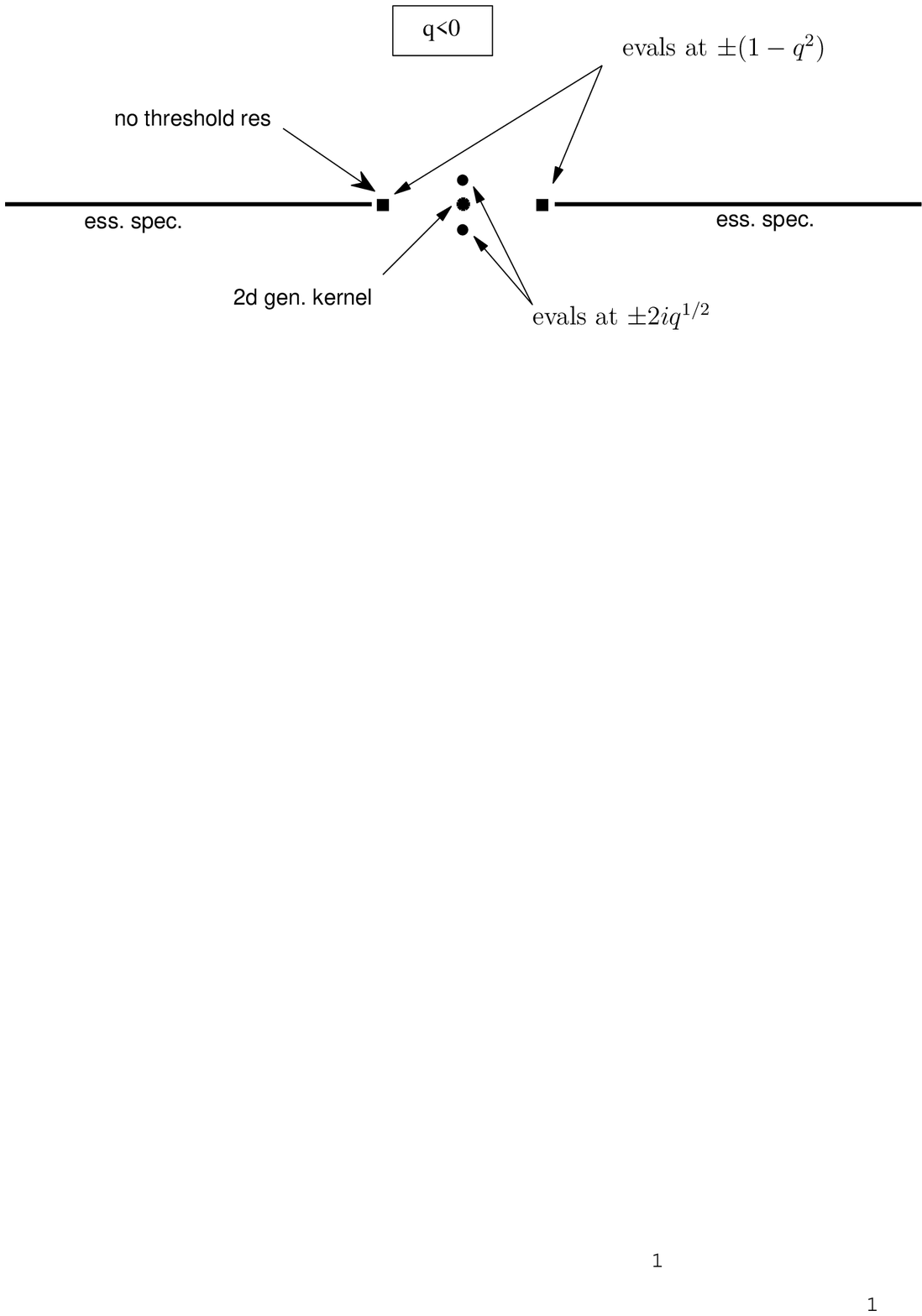}
\includegraphics[width=6in]{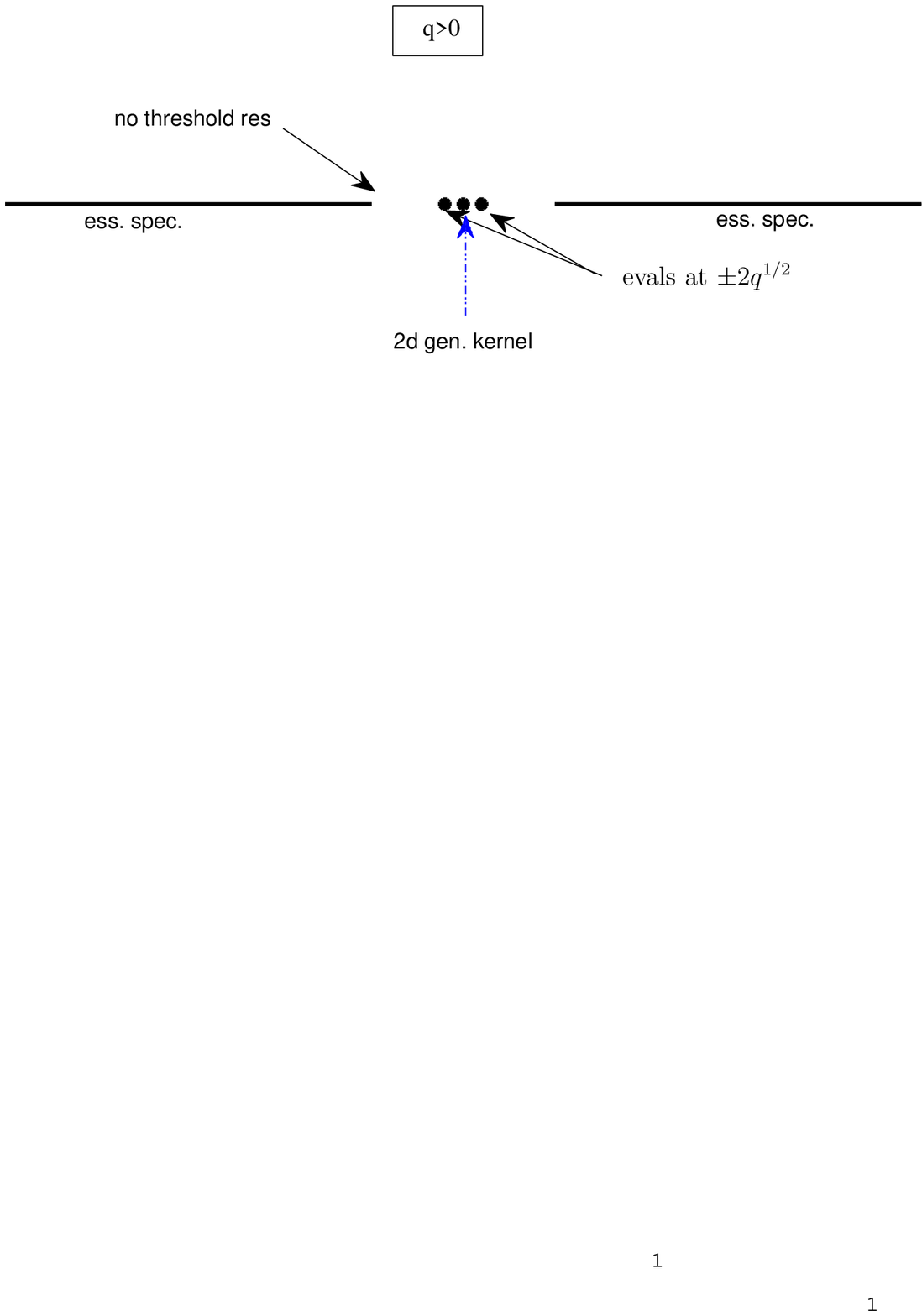}
\end{center}
\caption{The spectrum of the operator $ H_q $ for $ q < 0 $ (repulsive
$\delta$ potential) and $ q > 0 $ (attractive $\delta$ potential). 
The threshold resonances of the free problem become eigenvalues
for the repulsive potential which is counterintuitive.} 
\label{f:pcool3}
\end{figure}

We also see that there are {\em no} embedded eigenvalues in the 
continuous spectrum: they would correspond to {\em real} poles
in $ A $ and $ D $. Hence for $ k \in \RR\setminus \{ 0 \} $ 
the solutions $ \psi ( x , k ) $ and $ \psi ( x , -k ) $, or
the solutions $ \psi ( x, k ) $ and $ \psi ( - x, k ) $, form 
a basis of {\em tempered solutions} to 
\[  H_q u = ( k^2 + 1 ) u \,. \]

Our operator $ H_q $ is the Hamiltonian matrix for the quadratic form
given by 
\[  L = J H_q = 
\begin{bmatrix}
0 & -\partial_x^2 + 1 \\
-\partial_x^2 + 1 & 0 
\end{bmatrix} + 
2\sech^2(x+\sgn(x)\theta)\begin{bmatrix}
-1 & 2 \\
2  & -1
\end{bmatrix}
-2q
\begin{bmatrix}
0 & \delta_0 \\
\delta_0 & 0 
\end{bmatrix} \,, \]
but all we need are the general structural properties.

\subsection{Spectral decomposition of $ H_q $.}
\label{S:sdHq}
Let $ P^c_q $ be the symplectic projection on the symplectic
orthogonal of the discrete spectrum of $ H_q $ -- which we know 
consists of $ 4 $ eigenvalues for $ q > 0 $ and $ 6 $ eigenvalues for
$ q < 0 $.

As in the case of $ q = 0 $,
we want to write the Schwartz kernel of $ P^c_q $ as 

\[ P^c_q ( x , y )  = \frac 1 {2 \pi} \int_{\RR} 
( v_+ ( x, k  ) \tilde  v_+ ( y , k )^* 
+ v_- ( x, k  ) \tilde v_- ( y , k )^* ) dk \,, \]
where 
\[  H_q v_\pm = \pm ( k^2 + 1 ) v_\pm \,, \ \ 
 H_q^* \tilde v_\pm = \pm ( k^2 + 1 ) \tilde v_\pm \,, \]
and 
\begin{equation}
\label{eq:delta} 
\frac 1 {2 \pi}  \int_\RR \tilde v_{\pm } ( x , k )^* v_\pm ( x, k') dx = 
\delta ( k-k') \,, \ \ 
\frac 1 {2 \pi}   \int_\RR \tilde v_{\pm } ( x , k )^* v_\mp ( x, k') dx 
= 0 \,. 
\end{equation}
Now the generalized eigenfunctions are in fact double with $ \pm k $
corresponding to the single generalized eigenvalue $ k^2+1 $.

A comparison with standard one dimensional
scattering -- see \cite[(1.26),(1.30)]{TZ} -- shows that 
the states $ v_+ ( x, k ) $ should be chosen so that 
they satisfy\footnote{The states with $ \pm k > 0 $ correspond 
to $ e_\pm  $ in the notation of \cite{TZ}.} \eqref{E:v++} and \eqref{E:v+-}
-- see \cite[\S 2.2.2]{BP}
and \cite[Proposition 6.9]{KS} for a full justification of this in the case of the 
system \eqref{eq:form}.

We compare these asymptotic formul{\ae} to the
properties of $ \psi ( x, k ) $:
\[  \psi (  x,  k ) \sim \begin{bmatrix} 1 \\ 0 \end{bmatrix}
\left\{ \begin{array}{ll}   
B (  k ) ( 1 - i k )^2 e^{i k x} +  C (  k ) ( 1 + i k )^2 e^{ - i k x } \,,  & 
x \rightarrow + \infty \\
( 1 + i k )^2 e^{ixk} \,,   & x \rightarrow - \infty \end{array} \right.\,, \]
and 
\[  \psi ( - x, - k ) \sim \begin{bmatrix} 1 \\ 0 \end{bmatrix}
\left\{ \begin{array}{ll}   
( 1 - i k )^2 e^{ixk} \,,  & 
x \rightarrow + \infty \\
B ( - k ) ( 1 + i k )^2 e^{i k x} +  C ( - k ) ( 1 - i k )^2 e^{ - i k x } \,,  & 
x \rightarrow - \infty \end{array} \right.\,. \]
This shows that 
\begin{equation}
\label{eq:psi} v_+( x, k ) \defeq \left\{ \begin{array}{ll} a_+(k) 
\psi ( -x, -k ) & k > 0 \\
a_-( k ) \psi ( x, k) & k < 0 \,,  \end{array} \right. 
\end{equation}
where
\[  a_\pm ( k ) = \frac 1 { ( 1 \pm i k )^2 B ( \mp k ) } \,. \]

Note that $v_+(-x,-k)=v_+(x,k)$.  Define
\[ v_-( x, k ) \defeq \sigma_1 v_+ ( x,k ) \,, \]
and 
\[ \tilde v_\pm (x , k ) \defeq \sigma_3 
v_\pm ( x, k )  \,. \]

\subsection{Propagator $ \exp ( - i t H_q/2 ) $}

The continuous spectrum part of the propagator appearing in Theorems \ref{T:main}
and \ref{T:npt} can now be written as 
$$e^{-\frac12it H_q}P_c(x,y) = \frac1{2\pi} \int_\mathbb{R} 
( e^{-\frac12 it(1+k^2)} v_+(x,k)\tilde v_+(y,k)^* 
+ e^{
\frac12 it(1+k^2)} v_-(x,k)\tilde v_-(y,k)^* )  dk \,,  $$
where $ v_\pm $ are given in \S~\ref{S:sdHq}. We have the analogue of the first
part of 
Proposition \ref{p:w0}. Since the proof is exactly the same, it is omitted.

\begin{proposition}
\label{p:w0q}
Suppose that $ w_0 \in {\mathcal S} ( \RR \setminus \{ 0 \} ) 
\cap L^\infty ( \RR ) $ is real valued and even. Then 
\begin{gather}
\label{eq:lw0q}
\begin{gathered}
e^{-\frac 12 i t H_q } 
P_c w_0(x,t) = \frac 2 {\sqrt{2 \pi}} \int_{0}^{\infty} (a_{\textnormal{ev}}(x,k) e^{-\frac12 i(k^2+1)t} + b_{\textnormal{ev}}(x,k) e^{\frac12 i(k^2+1)t})f(k) dk \,, \\  
f(k) =\frac{2}{ \sqrt{2 \pi } }  \int_0^{\infty} 
( \overline{a_{\textnormal{ev}}(x,k)} - \overline{b_{\textnormal{ev}}(x,k)})w_0(x) \, dx \,, 
\end{gathered}
\end{gather}
where $ a $ and $ b $ are defined by 
\begin{equation}
\label{eq:ab}
v_+(x,k) 
= \begin{bmatrix} a(x, k) \\ b(x, k) \end{bmatrix}\, ,
\end{equation}
and $a_\textnormal{ev}(x,k) = (a(x,k)+a(-x,k))/2$, $b_\textnormal{ev}(x,k) = (b(x,k)+b(-x,k))/2$ .
\end{proposition}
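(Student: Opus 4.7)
The plan is to mimic the derivation of Proposition \ref{p:w0} step by step, using the spectral decomposition of $H_q$ constructed in \S\ref{S:sdHq} in place of Kaup's basis for $H_0$, and then to exploit the evenness of $w_0$ together with the symmetry $v_+(-x,-k)=v_+(x,k)$ (noted after \eqref{eq:psi}) to fold the integrals on $\mathbb{R}$ into integrals on $(0,\infty)$.

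First I would introduce the distorted Fourier transform attached to $H_q$:
\begin{equation*}
V_+ g(x) \defeq \frac{1}{\sqrt{2\pi}}\int_\RR v_+(x,k)\,g(k)\,dk,\qquad
V_+^* F(k) \defeq \frac{1}{\sqrt{2\pi}}\int_\RR \tilde v_+(x,k)^* F(x)\,dx,
\end{equation*}
and analogously $V_-$, $V_-^*$ with $v_-$, $\tilde v_-$. The biorthogonality identities \eqref{eq:delta} and the kernel formula displayed just before the statement of the proposition give the operator identity
\begin{equation*}
P_c\,e^{-\frac12 it H_q} \;=\; V_+ M(t) V_+^* \sigma_3 + \sigma_1 V_+ M(-t) V_+^*\sigma_3\sigma_1,\qquad M(t)F(k)\defeq e^{-\frac12 i(k^2+1)t}F(k),
\end{equation*}
exactly as in the free case. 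Applying $\begin{bmatrix}1&0\end{bmatrix}$ on the left and $\begin{bmatrix}1\\1\end{bmatrix}$ on the right (which is the effect of the matrix $U$ from \eqref{eq:Hql} on a real-valued scalar $w_0$) and writing $v_+(x,k)=\begin{bmatrix}a(x,k)\\ b(x,k)\end{bmatrix}$, I obtain
\begin{equation*}
e^{-\frac12 it H_q}P_c w_0(x,t) = \frac{1}{\sqrt{2\pi}}\int_\RR \bigl(a(x,k)e^{-\frac12 i(k^2+1)t}+b(x,k)e^{\frac12 i(k^2+1)t}\bigr)F(k)\,dk,
\end{equation*}
where $F(k)=\tfrac{1}{\sqrt{2\pi}}\int_\RR(\overline{a(x,k)}-\overline{b(x,k)})w_0(x)\,dx$. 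This is the literal analogue of what is done in the proof of Proposition \ref{p:w0}.

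The new step, which justifies the factor of $2$ and the restriction to $(0,\infty)$ in \eqref{eq:lw0q}, is the use of parity. From $v_+(-x,-k)=v_+(x,k)$ one reads off $a(-x,-k)=a(x,k)$ and $b(-x,-k)=b(x,k)$, hence $a(x,-k)=a(-x,k)$ and similarly for $b$. Since $w_0$ is even, the substitution $x\mapsto -x$ in the negative half of the $x$-integral defining $F(k)$ gives
\begin{equation*}
F(k) = \frac{2}{\sqrt{2\pi}}\int_0^\infty\bigl(\overline{a_\ev(x,k)}-\overline{b_\ev(x,k)}\bigr)w_0(x)\,dx = f(k),
\end{equation*}
which in particular makes $F$ even in $k$. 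The substitution $k\mapsto -k$ on the negative half of the $k$-integral then folds $\int_\RR a(x,k)F(k)\,dk$ into $2\int_0^\infty a_\ev(x,k)f(k)\,dk$, and likewise for the $b$ term, yielding the claimed formula.

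The computations are routine; the only point that requires a little care is bookkeeping the symmetries and checking that the pairings $\tilde v_\pm$ are consistent with the $\sigma_3$ twist arising in the $U$-conjugation so that the scalar projection $\begin{bmatrix}1&0\end{bmatrix}(\cdots)\begin{bmatrix}1\\1\end{bmatrix}$ indeed collapses to the scalar identity above. This is the main (mild) obstacle, and it is handled exactly as in the free case since $\sigma_1 H_q\sigma_1=-H_q$ and $\sigma_3 H_q\sigma_3 = H_q^*$ still hold for the $\delta_0$-perturbed operator (the delta potential is diagonal with signs $+1,-1$, compatible with both identities).
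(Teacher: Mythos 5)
Your proposal is correct and is essentially the paper's own argument: the paper omits the proof of this proposition precisely because it ``is exactly the same'' as that of Proposition \ref{p:w0}, and then records the parity identities $a(x,k)=a(-x,-k)$, $b(x,k)=b(-x,-k)$ used to fold the integrals onto $(0,\infty)$, which is exactly your final step. The only blemish is notational: having defined $V_+^*$ via the dual family $\tilde v_+=\sigma_3 v_+$, the extra $\sigma_3$ in your operator identity is redundant (it is needed only when $V_+^*$ is built from $v_+$ itself, as in the paper's free-case proof), but this does not affect the final scalar formula.
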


In the above proposition, we reexpressed integrals over $\mathbb{R}$ as integrals over $(0,+\infty)$ in \eqref{eq:lw0q} using that $a(x,k)=a(-x,-k)$ and $b(x,k)=b(-x,-k)$.  This implies that $f(k)$ is even, and that $a_{\ev}(x,k)$ and $b_{\ev}(x,k)$ are even in both $x$ and $k$.

\subsection{Asymptotic analysis of the breathing patterns}
\label{aabp}

We will now prove \eqref{eq:t3} by describing the asymptotics of 
\begin{gather}
\label{eq:w00}
\begin{gathered}
 w ( x , t ) \defeq e^{-\frac 12  i t H_q } 
P_q^c w_0(x,t) \,, 
\end{gathered}
\end{gather} 
at $ x = 0 $. Here $ w_0 $ is assumed to satisfy \eqref{eq:t00}. In particular,
\begin{equation}
\label{E:w1}
w (0,t) = \sqrt \frac2{\pi} \int_0^\infty (a(0,k)e^{-\frac12 it(1+k^2)} 
+ b(0,k)e^{\frac12it(1+k^2)})f(k) \, dk \,,
\end{equation}
where $ a $ and $ b $ are defined by \eqref{eq:ab} and $ f $ is given in \eqref{eq:lw0q}.  

We now focus on the form of the expression for $f(k)$ and derive some of its smoothness and decay properties.  The behaviour of $ f $ for large values of $ k$ can be deduced directly from the definition of $f(k)$ as the pairing of $w_0$ with a solution $\psi$ to $H_q\psi = (1+k^2)\psi$.

\begin{lemma}
\label{l:fkl}
For real and even $ w_0 $ satisfying \eqref{eq:t00}, 
and for $ f $ defined by 
\eqref{eq:lw0q} we have 
$ f|_{\RR_\pm} \in C^\infty  ( \RR_\pm ) $. For $ |k| > \epsilon $  we have
\[  | f^{(p)} ( k ) | \leq \frac{ C_{\epsilon, p } q } { 1 + k^2 } \,, \]
uniformly in $ q $.
\end{lemma}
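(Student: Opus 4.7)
The plan is to exploit the eigenfunction equation $H_q v_+ = (1+k^2) v_+$ to convert decay in $k$ into derivatives landing on $w_0$, each of which is $O(|q|)$ by the hypothesis \eqref{eq:t00}.

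\textbf{Smoothness on $\RR_\pm$.} By the explicit ansatz \eqref{eq:psi0} together with Cramer's rule applied to the $4\times 4$ system of Appendix~\ref{AA}, the coefficients $A(k), B(k), C(k), D(k)$ are rational in $(k, \mu)$ and depend smoothly on $q$. By Lemmas \ref{l:reig} and \ref{l:B}, for $0 < |q| \ll 1$ their only real singularities lie in an $O(|q|^{1/2})$ neighbourhood of $0$. Hence for $|k| > \epsilon$, $v_+(x,k)$ is jointly smooth in $(x,k)$ on $\{x \neq 0\} \times (\RR_\pm \cap \{|k|>\epsilon\})$, and differentiation under the integral sign in \eqref{eq:lw0q} gives $f|_{\RR_\pm} \in C^\infty$.

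\textbf{Decay estimate.} Writing $v_+ = (a,b)^T$ and contracting the eigenfunction equation $H_q v_+ = (1+k^2) v_+$ with the row $(1,-1)$, a direct calculation using the explicit form of $H_q$ from \eqref{eq:Hql} yields the scalar identity, valid on $\RR \setminus \{0\}$,
\[
(1+k^2)(a - b) \;=\; (-\partial_x^2 + 1 - 6v^2)(a + b).
\]
Setting $\phi(x,k) = (\overline{a+b})_{\ev}(x,k)$ and using that the operator $L_+ \defeq -\partial_x^2 + 1 - 6v^2$ preserves even functions (since $v$ is even on each half-line), we obtain
\[
(1+k^2)\, f(k) \;=\; \frac{2}{\sqrt{2\pi}} \int_0^\infty (L_+ \phi)(x,k)\, w_0(x)\, dx.
\]
Two integrations by parts on $(0,\infty)$ move $L_+$ onto $w_0$ at the cost of the boundary contribution
\[
\partial_x \phi(0^+, k)\, w_0(0) \;-\; \phi(0, k)\, w_0'(0^+).
\]
By \eqref{eq:t00}, $|w_0(0)|, |w_0'(0^+)| \leq C|q|$ and $\|(1+x^2)\, L_+ w_0\|_{L^\infty(0,\infty)} \leq C|q|$; meanwhile $\phi$ and $\partial_x \phi|_{x=0^+}$ are bounded uniformly on $\{|k| > \epsilon\}$ in $q$ (by the normalization \eqref{eq:psi} and the smooth $q$-dependence of $A, B, C, D$ from Appendix~\ref{AA}). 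This yields $|(1+k^2)\, f(k)| \leq C_\epsilon |q|$.

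\textbf{Higher derivatives and the main obstacle.} For $p \geq 1$, differentiating the eigenfunction equation in $k$ gives
\[
(H_q - (1+k^2))\, \partial_k^p v_+ \;=\; 2pk\, \partial_k^{p-1} v_+ \;+\; p(p-1)\, \partial_k^{p-2} v_+,
\]
and an induction on $p$ using the same two-step integration by parts produces the desired bound. Each $\partial_k$ applied to $v_+$ introduces at most polynomial-in-$x$ growth from the oscillatory factors $e^{\pm ikx}$ and the damped exponentials $e^{-\mu x}$ in \eqref{eq:psi0}; this is absorbed by the weighted estimates $\|x^j \partial_x^\ell w_0\|_{L^\infty(0,\infty)} \leq C_{j\ell}|q|$. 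The main obstacle is the uniform control in $q$ and $k$ of the boundary quantities $\partial_k^p v_+(0^+, k)$ and $\partial_k^p \partial_x v_+(0^+, k)$; this is handled by differentiating the explicit Cramer formulas for $A, B, C, D$ derived in Appendix~\ref{AA} and checking that, for $|k| > \epsilon$, the determinant of the defining system stays bounded away from zero uniformly for $|q| \ll 1$.
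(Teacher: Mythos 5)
Your proposal is correct and follows essentially the same route as the paper: use $H_q v_+=(1+k^2)v_+$ to trade the factor $(1+k^2)$ for the differential operator, integrate by parts onto $w_0$, control the interior term by the weighted bounds \eqref{eq:t00} and the uniform boundedness of $v_+(x,k)$ for $|k|>\epsilon$ coming from Appendix \ref{AA}, and then induct on $p$ via $H_q\partial_k^p v_+=(1+k^2)\partial_k^pv_+ +2pk\,\partial_k^{p-1}v_+ +p(p-1)\partial_k^{p-2}v_+$. The only point where your justification is thin is the claim that $\partial_x\phi(0^+,k)$ is bounded uniformly in $k$: a term-by-term estimate of the differentiated explicit formula produces $O(\langle k\rangle)$ contributions (from the $ik\,e^{\pm ik(x+\theta)}$ factors), which would degrade the conclusion to $|f(k)|\lesssim q/\langle k\rangle$. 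The correct reason it is bounded --- in fact $O(q)$ --- is the jump condition $\partial_x v_+(0^+)-\partial_x v_+(0^-)=-2q\sigma_3 v_+(0)$ built into the construction of $\psi$, which gives $\partial_x\phi(0^+,k)=-q\,\overline{(a(0,k)-b(0,k))}$ since $\phi$ is the even part; the paper's version of the computation, carried out on $\RR\setminus\{0\}$ in vector form, packages the boundary contribution from the start as $v_+(0,k)^*\bigl(2q\,w_0(0)+(w_0'(0-)-w_0'(0+))\bigr)$ and so never has to bound a one-sided $x$-derivative of $v_+$ by itself.
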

\begin{proof}
We recall from \eqref{eq:deff} (the formal structure of $ f ( k ) $ is
the same as in the free case) that 
\[ f(k) = \left(V_+^* \begin{bmatrix} 1 \\ -1 \end{bmatrix} w_0 \right)(k) 
= \frac 1 {\sqrt { 2 \pi } } \int_\RR 
v_+ ( x, k )^* \begin{bmatrix} \ w_0 ( x ) \\ -w_0( x )  \end{bmatrix} dx \,. 
\] 
The formul{\ae} for $ a ( x , k ) $ and $ b ( x , k ) $ above, and  
the formul{\ae} in Appendix \ref{AA}, show that the $ v_+( x, k ) $ 
are uniformly bounded in $ x $ and in $ k $, for $ |k| > \epsilon $.
Since $ ( k^2 + 1 ) v_+ ( x, k ) = H_q v_+ ( x, k ) $, integration by 
parts (see the formula for $ H_q $ in \eqref{eq:Hql}) shows that 
\[ \begin{split} ( 1 + k^2 ) f ( k ) & = 
\frac 1 {\sqrt { 2 \pi } } \int_\RR 
\left(
\begin{bmatrix}  - \partial_x^2 + 1 - 4 v^2 & -2 v^2 \\
-2 v^2 & - \partial_x^2 + 1 - 4 v^2 \end{bmatrix} 
v_+ ( x, k ) 
\right)^*   
\begin{bmatrix} w_0 ( x ) \\ w_0( x )  \end{bmatrix} dx  \\
& \ \ \ \ - \frac{ 2 q } { \sqrt { 2 \pi } } 
 v_+ ( 0 , k )^* \begin{bmatrix}  w_0 ( 0 ) \\ w_0( 0 )  \end{bmatrix} 
\\ 
& = \frac 1 {\sqrt { 2 \pi } } \int_{\RR \setminus 0} 
v_+ ( x, k )^* \begin{bmatrix}  - \partial_x^2 + 1 - 4 v^2 & -2 v^2 \\
-2 v^2 & - \partial_x^2 + 1 - 4 v^2 \end{bmatrix} 
\begin{bmatrix}  w_0 ( x ) \\ w_0( x )  \end{bmatrix} dx  \\
& \ \ \ \ + 
 \frac{ 1 } { \sqrt { 2 \pi } } 
 v_+ ( 0 , k )^*  \left( 
2 q 
  \begin{bmatrix}  w_0 ( 0 ) \\ w_0( 0 )  \end{bmatrix} 
+\begin{bmatrix}  w_0'(0-) - w_0' ( 0 +  )   \\ w_0'(0-) - w_0' ( 0 +  )
\end{bmatrix} \right) 
\,, 
\end{split} \]
where the last term came from the fact that 
$ w_0 ( x ) = u( x, 0 )  - v_1 ( x ) $, $ u ( x, 0 ) \in H^1  $, so that
$ w_0 $ is continuous at $ x = 0 $, and the $ w_0' ( 0 \pm ) $ terms come
from integation by parts.

The right hand side is uniformly bounded 
for $ | k | > \epsilon $ which proves the lemma for $ p = 0 $.

We can now proceed by induction noting that 
\[ \begin{split} H_q \partial^p_k v_+ ( x, k ) & 
= \partial_k^p H_q v_+ ( x , k )  \\ & =
( k^2 + 1 ) \partial_k^p v_+ ( x , k ) + 2 p k \partial_k^{p-1}  v_+ ( x, k ) +
p ( p -1) \partial_k^{p-2} v_+ ( x, k ) \,, 
\end{split} \]
and that for $ | k | > \epsilon $, 
$ | \partial^p_k v_+ ( x, k ) w_0 ( x ) | \leq C_\epsilon $,
$ x \in \RR $.
\end{proof}

We now derive a workable expression for $f(k)$.  The formul{\ae} \eqref{eq:psi0} and \eqref{eq:psi} show that for  $k>0$, we have
$$  
a(x,k) = 
\begin{aligned}[t]
&\left(\frac{(t-ik)^2e^{ik(x+\theta)} }{B(-k)(1+ik)^2} 
- \frac{A(-k)s^2e^{-\mu (x+\theta)}}{B(-k)(1+ik)^2}\right) x_+^0 \, + \ \\
&+ \left( \frac{(t-ik)^2e^{ik(x-\theta)}}{(1+ik)^2}
 + \, \frac{C(-k)(t+ik)^2e^{-ik(x-\theta)}}{B(-k)(1+ik)^2} - \frac{D(-k)s^2e^{\mu (x-\theta)}}{B(-k)(1+ik)^2} \right) x_-^0 \,, 
\end{aligned}
$$
and
$$
b(x,k) = 
\begin{aligned}[t]
& \left( \frac{-s^2e^{ik(x+\theta)}}{B(-k)(1+ik)^2} + \frac{A(-k)(t+\mu)^2e^{-\mu (x+\theta)}}{B(-k)(1+ik)^2} \right) x_+^0 \, + \ \\ 
& + \left( -\frac{s^2e^{ik(x-\theta)}}{(1+ik)^2} - \frac{C(-k)s^2e^{-ik(x-\theta)}}{B(-k)(1+ik)^2} +\frac{D(-k)(t-\mu)^2e^{\mu (x-\theta)}}{B(-k)(1+ik)^2} \right) x_-^0 \,, 
\end{aligned}
$$
where $ s = \sech ( x + \sgn ( x ) \theta ) $, $ t = \tanh( x + \sgn (x ) \theta ) $, $ \theta = 
\tanh^{-1} ( q ) $, and $ \mu = ( k^2 + 2)^{\frac 12 }$.
From these expressions, we deduce that for $x>0$, $k>0$,
$$a_{\ev}(x,k) = 
\begin{aligned}[t]
\frac{(1+C(-k))(t-ik)^2e^{ik(x+\theta)}}{2B(-k)(1+ik)^2} + \frac{(t+ik)^2e^{-ik(x+\theta)}}{2(1+ik)^2} &\\
- \frac{(A(-k)+D(-k))s^2e^{-\mu (x+\theta)}}{2B(-k)(1+ik)^2} & \,,
\end{aligned}$$
$$b_{\ev}(x,k) = -\frac{(1+C(-k))s^2e^{ik(x+\theta)}}{2B(-k)(1+ik)^2} - \frac{s^2e^{-ik(x+\theta)}}{2(1+ik)^2} + \frac{(A(-k)+D(-k))(t+\mu)^2e^{-\mu (x+\theta)}}{2B(-k)(1+ik)^2} \,,$$
and thus (using that $A(-k)=\overline{A(k)}$, etc.) for $k>0$, we have
\begin{equation}
\label{E:fksplit}
f(k) = \frac{1+C(k)}{2B(k)(1-ik)^2}f_1(k) + \frac{1}{2(1-ik)^2}f_1(-k) - \frac{A(k)+D(k)}{2B(k)(1-ik)^2} f_2(k) \,,
\end{equation}
where
\begin{equation}
\label{E:fk}
\begin{gathered}
f_1(k) = \frac{1}{\sqrt{2\pi}}\int_0^\infty ((t+ik)^2+s^2)e^{-ik(x+\theta)}w_0(x) \, dx \, ,\\
f_2(k) = \frac{1}{\sqrt{2\pi}}\int_0^\infty ((t+\mu)^2+s^2)e^{-\mu (x+\theta)}w_0(x) \, dx \,.
\end{gathered}
\end{equation}

By differentiation under the integral sign, integration by parts, and Taylor's theorem, we have

\begin{lemma}
\label{L:f-bounds}
Let $f_j$ be defined by \eqref{E:fk} and suppose $w_0$ satisfies \eqref{eq:t00}.  We have, for each $\ell = 0,1,2 \ldots$,
$$\|f_j^{(\ell)}\|_{L^\infty} \lesssim q (1+|k|)\,,$$
with the implicit constants depending only upon $\ell$ and $w_0$ (specifically, the ``$q$'' on the right side could be replaced with a finite sum of seminorms of $w_0$).  Moreover,
$$f_1(k) = f_1(0) + kf_1'(0) + k^2g(k)$$
where $g(k)$ is a smooth function satisfying 
 for each $\ell = 0,1,2 \ldots$,
$$\|g^{(\ell)}\|_{L^\infty} \lesssim q (1+|k|)\,.$$
\end{lemma}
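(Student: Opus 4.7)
The plan is to handle $f_1$ and $f_2$ by exploiting the identity $\tanh^2 + \sech^2 = 1$, which, with $y = x + \theta$ and $\theta = \tanh^{-1}q$, gives
$$(t+ik)^2 + s^2 = (1+ik)^2 + 2ik(\tanh y - 1), \qquad (t+\mu)^2 + s^2 = (1+\mu)^2 + 2\mu(\tanh y - 1).$$
For $f_1$, this splits the integral as $\sqrt{2\pi}\, f_1(k) = (1+ik)^2 A(k) + 2ik B(k)$, where
$$A(k) = \int_\theta^\infty e^{-iky} w_0(y-\theta)\,dy, \qquad B(k) = \int_\theta^\infty (\tanh y - 1) e^{-iky} w_0(y-\theta)\,dy.$$
Because $\tanh y - 1 = O(e^{-2y})$ and $w_0$ is rapidly decaying by \eqref{eq:t00}, $B$ is essentially the Fourier transform of a Schwartz function, so $|B^{(\ell)}(k)| \lesssim q$ uniformly.

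For $A$, one integration by parts in $y$ yields $A(k) = e^{-ik\theta} w_0(0)/(ik) + (ik)^{-1}\int_\theta^\infty e^{-iky} w_0'(y-\theta)\,dy$, giving $|A(k)| \lesssim q/|k|$ for $|k| \geq 1$, and trivially $|A(k)| \lesssim q$ otherwise. Multiplication by the $O(|k|^2)$ prefactor $(1+ik)^2$ produces $|f_1(k)| \lesssim q(1+|k|)$. Higher derivatives are handled by Leibniz: differentiation of $e^{-iky}$ inserts factors $(-iy)^m$, and since $y^m w_0(y-\theta)$ remains Schwartz with boundary value $\theta^m w_0(0) = O(q)$ at $y=\theta$, the same split-and-IBP argument produces $|f_1^{(\ell)}(k)| \lesssim q(1+|k|)$ at every order.

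For $f_2$, the exponential $e^{-\mu y}$ with $\mu = \sqrt{k^2+2} \geq \sqrt 2$ supplies decay in place of oscillation. A direct estimate gives $|f_2(k)| \leq C(1+\mu)^2 \int_0^\infty e^{-\mu x} |w_0(x)|\,dx \lesssim q(1+\mu)^2/\mu \lesssim q(1+|k|)$, and derivatives use $\partial_k \mu = k/\mu$ (bounded by $1$), with polynomial-in-$y$ factors from each differentiation absorbed by $\|y^m e^{-\mu y}\|_{L^1} \lesssim \mu^{-m-1}$. The main obstacle throughout is producing the factor $1+|k|$ rather than the naive $(1+|k|)^2$ that a crude estimate would give: the integrand is polynomial of degree $2$ in $k$ (respectively $\mu$), and the subtractive decomposition above is precisely what isolates a single factor of $|k|$ which can then be cancelled either by one integration by parts (for $f_1$) or by one power of $1/\mu$ from $L^1$-integration of the exponential (for $f_2$).

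For the Taylor expansion I would invoke the integral form of Taylor's theorem,
$$g(k) = \int_0^1 (1-r)\, f_1''(kr)\,dr,$$
which is smooth because $f_1$ is. Differentiating under the integral sign gives $g^{(\ell)}(k) = \int_0^1 (1-r) r^\ell f_1^{(\ell+2)}(kr)\,dr$, and the bound $|f_1^{(\ell+2)}(kr)| \lesssim q(1 + r|k|)$ just established integrates to $|g^{(\ell)}(k)| \lesssim q(1+|k|)$.
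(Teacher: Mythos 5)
Your proof is correct and follows exactly the route the paper indicates: the paper's entire proof of this lemma is the one-sentence invocation of ``differentiation under the integral sign, integration by parts, and Taylor's theorem,'' which is precisely what you carry out, with the identity $(t+ik)^2+s^2=(1+ik)^2+2ik(\tanh y-1)$ serving as a clean way to isolate the single integration by parts (resp.\ the single power of $1/\mu$) needed to improve the crude $(1+|k|)^2$ to $(1+|k|)$. The one point you treat as loosely as the paper does is the factor $e^{-\mu\theta}$ hiding in $f_2$ (unbounded in $k$ when $q<0$), which is harmless only because it cancels against the $e^{\mu\theta}$ in $A/B$ and $D/B$ in the sole place $f_2$ is used.
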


Now we return to the computation of $w(0,t)$ given by \eqref{E:w1}.  Because of continuity at $ x = 0 $ we conclude that
\begin{equation}
\label{E:a0}
a(0,k)  = \frac{1}{(1+ik)^2} \Big( \frac{(q-ik)^2e^{ik\theta}}{B(-k)} + \frac{A(-k)(1-q^2)e^{-\mu\theta}}{B(-k)} \Big)\,, 
\end{equation}
and $b(0,k)  = b_1(k) + b_2(k)$, where
\begin{equation}
\label{E:b0}
b_1(k) = -\frac{e^{ik\theta}}{(1+ik)^2B(-k)}, \quad 
b_2(k) = \frac{q^2 e^{ik\theta} +A(-k)(q+\mu)^2 e^{-\mu\theta}}{(1+ik)^2B(-k)}
\end{equation}

Upon substituting \eqref{E:a0}, \eqref{E:b0} and \eqref{E:fksplit} into \eqref{E:w1}, we obtain an expression with many terms. We first observe in the following lemma that, fortunately, many of these terms are of lower order.

\begin{lemma}
\label{l:f1}
For $ w_0 $ satisfying \eqref{eq:t00},  and $ f_1 ( k )$, $f_2 ( k ) 
 $ defined in 
\eqref{E:fk}, we have that each of the following
$$\int_0^\infty  e^{-\frac12itk^2} a(0,k) f(k) \, dk, \quad \int_0^\infty  e^{\frac12itk^2} b_2(k) f(k) \, dk, $$
$$\int_0^\infty e^{\frac12itk^2}   \frac{b(0,k)(A(k)+D(k))}{2B(k)(1-ik)^2} f_2(k) \, dk \,,  $$
is of size 
$$\mathcal{O}\Big( \frac{q^2}{t^{1/2}} \Big) + \mathcal{O}\Big( \frac{q}{t^{3/2}}\Big) \,. $$ 
\end{lemma}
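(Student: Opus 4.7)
The plan is to treat each of the three integrals as an endpoint oscillatory integral with quadratic phase $\pm t k^2/2$ whose critical point lies at the boundary $k=0$, and to establish a two-term stationary phase expansion of the form
$$\int_0^\infty e^{\pm i t k^2/2}\, g(k)\, dk = \tfrac12\sqrt{\tfrac{2\pi}{t}}\, e^{\pm i \pi/4}\, g(0) + \mathcal O\!\left(\frac{C(g)}{t^{3/2}}\right),$$
where $C(g)$ is a finite combination of $L^\infty$ and $C^1$ seminorms of $g$ (possibly with polynomial weights to handle the tail). With this in hand, the lemma reduces to two separate claims: first, that the amplitude satisfies $g(0) = \mathcal O(q^2)$ in each of the three cases; and second, that $C(g) = \mathcal O(q)$ uniformly. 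The second claim is routine given Lemma \ref{L:f-bounds} (which provides $\|f_j^{(\ell)}\|_{L^\infty} \lesssim q(1+|k|)$), Lemma \ref{l:fkl} (which supplies $|f^{(p)}(k)| \lesssim q/(1+k^2)$ away from $0$), and the smoothness and boundedness of the scattering coefficients $A$, $B$, $C$, $D$ on the real line recorded in Appendix \ref{AA}.

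To justify the stationary phase formula I would split each integral at $|k|=1$. On $[0,1]$, substitute $k = s/\sqrt{t}$ and Taylor-expand $g(k) = g(0) + k\int_0^1 g'(\tau k)\, d\tau$; the constant term produces the main $t^{-1/2}$ contribution via the explicit Fresnel integral, while the linear remainder contributes $\mathcal O(\|g'\|_{L^\infty}/t^{3/2})$. On $[1,\infty)$, writing $e^{\pm itk^2/2} = (\pm itk)^{-1}\partial_k e^{\pm itk^2/2}$ and integrating by parts twice gains a factor $t^{-2}$; the derivative boundary and bulk terms are absorbed by the $(1+k^2)^{-1}$ decay of $f$ supplied by Lemma \ref{l:fkl} together with the boundedness of the scattering-coefficient factors. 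Together with the $C(g)=\mathcal O(q)$ bookkeeping, this yields the remainder of order $\mathcal O(q/t^{3/2})$.

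The main obstacle, and the content of the lemma, is verifying $g(0)=\mathcal O(q^2)$ for each integrand; this is a delicate bookkeeping of powers of $q$. For the second integral the amplitude $b_2(k)f(k)$ satisfies $b_2(0) = \mathcal O(q)$ directly from \eqref{E:b0}: the first summand carries an explicit $q^2$, and the second carries $A(-k)$, which vanishes when $q=0$ and hence is $\mathcal O(q)$ at $k=0$ by the Appendix \ref{AA} expansion; combined with $f(0) = \mathcal O(q)$, which follows from \eqref{E:fksplit} and Lemma \ref{L:f-bounds}, the product is $\mathcal O(q^2)$. For the third integral the prefactor $A(k)+D(k)$ is $\mathcal O(q)$ at $k=0$ (since $A \equiv D \equiv 0$ in the free case), $f_2(0) = \mathcal O(q)$ by Lemma \ref{L:f-bounds}, and the remaining factor $b(0,k)/[B(k)(1-ik)^2]$ stays $\mathcal O(1)$ at $k=0$, again yielding $g(0)=\mathcal O(q^2)$. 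Finally, for the first integral, \eqref{E:a0} gives
$$a(0,0) = \frac{q^2 + A(0)(1-q^2)e^{-\sqrt{2}\,\theta}}{B(0)},\qquad \theta = \tanh^{-1}(q),$$
which is $\mathcal O(q)$ by the same mechanism (both $q^2$ and $A(0)$ are $\mathcal O(q)$, with $B(0)$ bounded away from zero by Appendix \ref{AA}); paired with $f(0)=\mathcal O(q)$ this gives $g(0) = \mathcal O(q^2)$ and completes the argument.
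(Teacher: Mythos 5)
Your reduction to ``$g(0)=\mathcal O(q^2)$ plus $C(g)=\mathcal O(q)$'' does not work, for two reasons. First, the one-term boundary stationary phase formula you invoke is false as stated: after the leading term $c_0t^{-1/2}g(0)$ the next contribution is $c_1t^{-1}g'(0)$, not $\mathcal O(t^{-3/2})$. Your own justification shows this --- substituting $k=s/\sqrt t$ in the linear remainder $\int_0^1 e^{\pm itk^2/2}\,k\bigl(\int_0^1 g'(\tau k)\,d\tau\bigr)\,dk$ produces $t^{-1}\int_0^{\sqrt t}e^{\pm is^2/2}\,s\,(\cdots)\,ds$, and one integration by parts in $s$ leaves a boundary term of size $t^{-1}\|g'\|_{L^\infty}$, not $t^{-3/2}$. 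An error $\mathcal O(q/t)$ is not absorbed by $\mathcal O(q^2/t^{1/2})+\mathcal O(q/t^{3/2})$ on the relevant time scales ($q/t\le q^2/t^{1/2}$ only when $t\ge q^{-2}$). To reach $t^{-3/2}$ you must either dispose of the $g'(0)$ term or control two derivatives of the amplitude.

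Second, and more fundamentally, the amplitudes are not uniformly in $q$ differentiable at the stationary point, so the claim ``$C(g)=\mathcal O(q)$'' fails. By Lemma \ref{L:asymp}, $1/B$ and $(1+C)/B$ contain the factors $k/(k-iq)$ and $(k+iq)/(k-iq)$, whose $k$-derivatives are of size $q/(k^2+q^2)$, hence $\mathcal O(1/q)$ at $k=0$, with second derivatives of size $\mathcal O(1/q^2)$. (Note also that $B$ and $C$ themselves blow up like $1/k$ at $k=0$; only the ratios are bounded, and only in the conormal sense of Definition \ref{d:con}.) This is exactly why the paper performs no stationary phase expansion at all: it decomposes each amplitude as $q\cdot(\text{conormal})+k^2\cdot(\text{conormal})$ using Lemma \ref{L:asymp} together with \eqref{E:a0}, \eqref{E:b0}, \eqref{E:fksplit}, and then applies the tailored bounds \eqref{E:error1} and \eqref{E:error2}, whose proofs use $(k\partial_k)^\ell$ bounds and the rescaled operator $-is^{-1}\partial_s$ precisely so as to tolerate the $k/(k\pm iq)$ structure. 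The powers of $q$ come from the explicit prefactors and from $\|f_j^{(\ell)}\|_{L^\infty}\lesssim q$ (Lemma \ref{L:f-bounds}); the value $g(0)$ is never used. Your evaluations at $k=0$ are correct in spirit, but they address only the leading term; the content of the lemma is the remainder, and that is where your argument breaks.
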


We will prove this lemma later.  In the next lemma, we deduce the asymptotic form of the dominant terms in the expression for $w(0,t)$.

\begin{lemma}
For $ w_0 $ satisfying \eqref{eq:t00},  and $ f_1 ( k )$ defined in 
\eqref{E:fk}, we have 
\label{l:f1b}
\begin{equation}
\label{E:mainexp}
\begin{aligned}
&\int_0^\infty \frac{ e^{\frac12ik^2t}}{2(1+k^2)^2B(-k)} \Big( \frac{1+C(k)}{B(k)}f_1(k) + f_1(-k) \Big) \,dk \\
&\qquad =  \frac12 t^{-1/2} e^{i\pi/4}\int_0^\infty w_0(x) \,dx  + \mathcal{O}(q^2) + \mathcal{O}\left( \frac{q}{t^{3/2}}\right)
\end{aligned}
\end{equation}
\end{lemma}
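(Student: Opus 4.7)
The plan is a boundary stationary phase analysis of the oscillatory integral with phase $\phi(k) = k^2/2$, whose only critical point is the left endpoint $k=0$.  Denote the amplitude by
$$\alpha(k) \defeq \frac{1}{2(1+k^2)^2 B(-k)}\Big(\tfrac{1+C(k)}{B(k)}f_1(k) + f_1(-k)\Big) \,.$$
The key first observation is that $f_1(0) = \tfrac{1}{\sqrt{2\pi}}\int_0^\infty w_0(x)\,dx$, which follows from $\tanh^2+\sech^2=1$ applied to \eqref{E:fk} at $k=0$.  The expansions $B(0)=1+\mathcal{O}(q)$ and $C(0)=\mathcal{O}(q)$ (matching the free values $B(0)=1$, $C(0)=0$, which produce the free solution $(\tanh^2,-\sech^2)^t$) are read off from the $4\times 4$ system in Appendix~\ref{AA}, and a short computation gives
$$\alpha(0) = \frac{f_1(0)}{2B(0)^2}\bigl(1+C(0)+B(0)\bigr) = f_1(0)\,\bigl(1+\mathcal{O}(q)\bigr) = f_1(0) + \mathcal{O}(q^2),$$
since $f_1(0)=\mathcal{O}(q)$ by the hypothesis \eqref{eq:t00} on $w_0$.

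Second, I would carry out the stationary phase itself by substituting $u=k^2/2$, which transforms the integral into a Laplace-type integral with endpoint singularity $u^{-1/2}$.  The Fresnel identity $\int_0^\infty e^{itu}u^{-1/2}\,du = \sqrt{\pi/t}\,e^{i\pi/4}$ immediately yields the main term
$\tfrac12\sqrt{2\pi/t}\,e^{i\pi/4}\,\alpha(0) = \tfrac12 t^{-1/2}e^{i\pi/4}\int_0^\infty w_0\,dx + \mathcal{O}(q^2/\sqrt{t})$, and the $\mathcal{O}(q^2/\sqrt{t})$ piece is absorbed into $\mathcal{O}(q^2)$ for $t\gtrsim 1$.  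The remaining decay is extracted by subtracting the Taylor polynomial of $\alpha$ at $0$ and integrating by parts twice against $e^{itk^2/2}$; Lemma~\ref{L:f-bounds} together with the smoothness of $B,C$ in $k$ from Appendix~\ref{AA} guarantees that the derivatives of $\alpha$ relevant to the remainder are $\mathcal{O}(q)$, giving a remainder of size $\mathcal{O}(q/t^{3/2})$.

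The main obstacle is the intermediate $\mathcal{O}(\alpha'(0)/t)$ contribution produced by a naive endpoint stationary phase: since a priori $\alpha'(0)=\mathcal{O}(q)$, this would give an $\mathcal{O}(q/t)$ term that is absorbed into neither $\mathcal{O}(q^2)$ nor $\mathcal{O}(q/t^{3/2})$.  This must be treated by exploiting the Hermitian symmetry $\alpha(-k) = \overline{\alpha(k)}$, which follows from $B(-k)=\overline{B(k)}$, $C(-k)=\overline{C(k)}$, and $\overline{f_1(k)}=f_1(-k)$ (the last being a consequence of $w_0$ being real), and which forces $\alpha'(0)$ to be purely imaginary.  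Combining this with the continuity and jump identities at $k=0$ among $A,B,C,D$ from Appendix~\ref{AA} shows that the real contribution $i\alpha'(0)/t$ in fact acquires an extra factor of $q$ from cancellations and is thereby absorbed into $\mathcal{O}(q^2)$.  Once this symmetry-plus-scattering-identities argument is in place, the rest is a routine Watson-type expansion.
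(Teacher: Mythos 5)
Your proposal founders on a false premise about the behaviour of the scattering coefficients at $k=0$. You assert that $B(0)=1+\mathcal{O}(q)$ and $C(0)=\mathcal{O}(q)$, ``matching the free values.'' This is not the case: for $q\neq 0$ the explicit formula in Appendix \ref{AA} shows $B(k)$ has a simple pole at $k=0$ (there is a factor of $k$ in its denominator), which is exactly the signature of the free threshold resonance being destroyed by the $\delta$ perturbation. What is true (Lemma \ref{L:asymp}) is
\[
\frac{1}{B(k)}=\frac{k}{k-iq}+\mathcal{O}(q)+\mathcal{O}(k^2)\,,\qquad
\frac{C(k)}{B(k)}=\frac{iq}{k-iq}+\mathcal{O}(q)+\mathcal{O}(k^2)\,,
\]
so $1/B(0)=\mathcal{O}(q)$ and $(1+C(0))/B(0)=-1+\mathcal{O}(q)$, and your amplitude satisfies $\alpha(0)=\mathcal{O}(q)\cdot\mathcal{O}(f_1(0))=\mathcal{O}(q^2)$, \emph{not} $f_1(0)+\mathcal{O}(q^2)$. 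A Watson/endpoint stationary-phase expansion anchored at $k=0$ therefore produces a leading term of size $\mathcal{O}(q^2t^{-1/2})$ and misses the actual main term, which lives in the bulk region $k\gg q$ where $k/(k\pm iq)\approx 1$. The same non-uniformity kills your remainder estimate: $\partial_k\big(k/(k+iq)\big)\big|_{k=0}=-i/q$, so the derivatives of $\alpha$ are of size $q^{-\ell}$ near $k=0$, not $\mathcal{O}(q)$; only the conormal bounds $|(k\partial_k)^\ell\alpha|\lesssim 1$ of Definition \ref{d:con} hold uniformly, which is precisely why the paper proves the tailored stationary-phase estimates \eqref{E:error1}--\eqref{E:error2} rather than integrating by parts naively. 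Your symmetry argument for $\alpha'(0)$ is moot for the same reason.

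The paper's proof keeps the singular model terms exactly rather than Taylor-expanding them: substituting $1/B(-k)\approx k/(k+iq)$ and $(1+C(k))/B(k)\approx (k+iq)/(k-iq)$ together with $f_1(\pm k)=f_1(0)\pm kf_1'(0)+\mathcal{O}(k^2)$, the two branches combine algebraically into $\big(f_1(0)+iqf_1'(0)\big)\,k^2/(k^2+q^2)$. The transition layer $k\lesssim q$ is then handled by the explicit identity
\[
\int_0^\infty e^{\frac12 is^2}\frac{s^2}{s^2+\delta^2}\,ds=\sqrt{\tfrac{\pi}{2}}\,e^{i\pi/4}+\mathcal{O}(\delta)\,,\qquad \delta=q\sqrt{t}\,,
\]
whose $\mathcal{O}(\delta)$ correction, multiplied by $f_1(0)=\mathcal{O}(q)$ and the overall $t^{-1/2}$, is absorbed into $\mathcal{O}(q^2)$. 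That explicit rescaling of the $k\sim q$ layer is the step your proposal is missing, and it cannot be replaced by a Taylor expansion of the amplitude at the endpoint.
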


Combining Lemmas \ref{l:f1}, \ref{l:f1b}, we obtain the following proposition.

\begin{proposition}
\label{p:as}
For $ w ( 0 , t ) $ given by \eqref{E:w1} we have for $ t \gg 1 $,
\[ w ( 0 , t ) = - \sqrt {\frac{2 } {  \pi  t } } e^{ i t / 2 + i \pi / 4 } 
\int_\RR  w_0 ( x ) dx + {\mathcal O} \left( \frac{q}{t^{3/2}}
\right) + {\mathcal O} ( q^2 ) \,.
\]
\end{proposition}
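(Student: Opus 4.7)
The strategy is to combine Lemmas~\ref{l:f1} and \ref{l:f1b}. Starting from the formula \eqref{E:w1} for $w(0,t)$, I would substitute \eqref{E:a0} for $a(0,k)$, \eqref{E:b0} for $b(0,k) = b_1(k)+b_2(k)$, and the decomposition \eqref{E:fksplit} for $f(k)$ into the three pieces involving $f_1(k)$, $f_1(-k)$, and $f_2(k)$. Expanding the product $(a(0,k)e^{-it(1+k^2)/2}+b(0,k)e^{it(1+k^2)/2})f(k)$ presents $w(0,t)$ as a sum of integrals, each with an oscillatory factor $e^{\pm it(1+k^2)/2}$ and a smooth-in-$k$, meromorphic-in-$q$ amplitude.

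By Lemma~\ref{l:f1}, the integrals containing $a(0,k)f(k)$, $b_2(k)f(k)$, or $b(0,k)(A+D)f_2(k)/(2B(1-ik)^2)$ are each of size $\mathcal{O}(q^2/t^{1/2})+\mathcal{O}(q/t^{3/2})$ and are absorbed into the error. The surviving contribution is
$$\sqrt{\tfrac{2}{\pi}}\, e^{it/2}\int_0^\infty e^{itk^2/2}\, b_1(k)\left(\frac{(1+C(k))f_1(k)}{2B(k)(1-ik)^2}+\frac{f_1(-k)}{2(1-ik)^2}\right) dk \,.$$
Substituting $b_1(k)=-e^{ik\theta}/((1+ik)^2 B(-k))$ and using $(1+ik)^2(1-ik)^2 = (1+k^2)^2$, this rewrites as
$$-\sqrt{\tfrac{2}{\pi}}\,e^{it/2}\int_0^\infty\frac{e^{ik\theta}e^{itk^2/2}}{2(1+k^2)^2 B(-k)}\left(\frac{1+C(k)}{B(k)}f_1(k)+f_1(-k)\right) dk \,.$$

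To match the integral appearing in Lemma~\ref{l:f1b}, I would split $e^{ik\theta}=1+(e^{ik\theta}-1)$. The contribution from $1$ is exactly the left-hand side of \eqref{E:mainexp}, and its asymptotic expansion together with the evenness identity $\int_0^\infty w_0 = \tfrac12 \int_\RR w_0$ produces the advertised main term $-\sqrt{2/(\pi t)}\,e^{it/2+i\pi/4}\int_\RR w_0\,dx$. For the $(e^{ik\theta}-1)$ piece, I would write $e^{ik\theta}-1 = ik\theta+\mathcal{O}(k^2 q^2)$ and then integrate by parts once against $e^{itk^2/2}$ to move the extra factor of $k$; Lemma~\ref{L:f-bounds} gives $f_1(0), f_1'(0) = \mathcal{O}(q)$ and $\theta=\tanh^{-1}(q)=\mathcal{O}(q)$, so the boundary and remainder terms yield a bound of order $\mathcal{O}(q\theta/t)=\mathcal{O}(q^2/t)$, which for $t\geq 1$ lies inside $\mathcal{O}(q^2)+\mathcal{O}(q/t^{3/2})$.

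The main technical obstacle is verifying that $B(-k)^{-1}$, $(1+C(k))/B(k)$, and the other rational coefficients are smooth and uniformly bounded in a fixed neighbourhood of $k=0$ as $q\to 0$, so that the stationary-phase expansion at $k=0$ applies uniformly in $q$. By Lemma~\ref{l:reig} and the analysis of Appendix~\ref{S:pnz}, the only zero of $B$ close to the origin is the eigenvalue zero at $k=-iq$ (in the repulsive case $q<0$, off the real axis) while no such zero is present in the attractive case; hence $|B(-k)|$ is bounded below on a real neighbourhood of $0$ uniformly in small $|q|$, justifying the stationary-phase and integration-by-parts bounds above.
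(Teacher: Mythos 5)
Your proposal is correct and follows essentially the same route as the paper, whose entire proof of Proposition \ref{p:as} is the single sentence ``Combining Lemmas \ref{l:f1}, \ref{l:f1b}, we obtain the following proposition'': you substitute \eqref{E:a0}, \eqref{E:b0}, \eqref{E:fksplit} into \eqref{E:w1}, discard the lower-order integrals via Lemma \ref{l:f1}, and read off the main term from Lemma \ref{l:f1b} together with the evenness of $w_0$. Your explicit treatment of the factor $e^{ik\theta}$ in $b_1(k)$ (which is absent from the integral in \eqref{E:mainexp} and is silently absorbed by the paper) is a welcome extra step, and the bound $\mathcal{O}(q^2/t)$ you obtain for it indeed fits inside the stated error.
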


\medskip
\noindent
{\bf Remark.} The leading expression in Proposition \ref{p:as} is formally 
the same as the expression in the case $ q = 0 $ in \S \ref{nlpfree}.
For the case described in Fig.~\ref{f:1}, 
$$w_0(x) = \frac{1}{1+q}\sech\left(\frac{x}{1+q}\right)-\sech(|x|+
\tanh^{-1}q)\,,$$
and we have
$$  \int_{-\infty} ^\infty w_0(x) = 2 q. $$

Now we develop some preliminaries in order to prove Lemmas \ref{l:f1} and \ref{l:f1b}.  To streamline the presentation, we introduce a definition:

\begin{definition}
\label{d:con}
A function $h(k,q)$ is \emph{conormal} (at $ k = 0 $) 
uniformly in $ q $, 
\[   h \in {\mathcal A} \,, \]
if for $ 0 \leq \ell \leq 4 $, 
\begin{equation}
\label{E:admissible}
\begin{aligned}
&| \partial_k^l h(k,q)| \leq C_\ell  \quad \ \ \ \ \ \text{for } \quad |k|\geq 1 \,,   \\
&| ( k \partial_k)^\ell h(k,q)| \leq C_\ell  \quad  \text{for} \quad |k|\leq 
1 \,, 
\end{aligned}
\end{equation}
with the constants independent of $q$.
\end{definition}

We note that the sum and product of conormal functions is conormal.  The two main types of lower order terms that we encounter arise from either $q$ times a conormal function or $k^2$ times 
a conormal function.  The former will give an error of size $q^2/t^{1/2}$ and the latter an error of size $q/t^{3/2}$.  This will follow (as we will see in more detail in the proof of Lemmas \ref{l:f1}, \ref{l:f1b} below) from Lemma \ref{L:f-bounds} and the following lemma applied with $f=f_j$, $j=1,2$ defined in \eqref{E:fk}.

\begin{lemma}
Suppose that $h(k,q)$ is conormal in the sense of Definition \ref{d:con}.  Then
\begin{equation}
\label{E:error1}
\left| \int_0^\infty e^{\pm \frac12itk^2} h(k,q) f(k) \, dk \right| \lesssim \frac{1}{\sqrt t} \sum_{j=0}^2 \|f^{(j)}\|_{L^\infty} \,,
\end{equation}
\begin{equation}
\label{E:error2}
\left| \int_0^\infty e^{\pm \frac12itk^2} k^2 h(k,q) f(k) \, dk \right| \lesssim \frac{1}{t^{3/2}} \sum_{j=0}^4 \|f^{(j)}\|_{L^\infty} \,,
\end{equation}
with the implicit constants independent of $q$.
\end{lemma}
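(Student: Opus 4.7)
Both estimates are instances of oscillatory integrals with phase $\phi(k)=k^2/2$ having its unique stationary point at the boundary endpoint $k=0$. The expected $t^{-1/2}$ decay for \eqref{E:error1} is the standard boundary-stationary-phase gain, and the extra $t^{-1}$ in \eqref{E:error2} comes from the double vanishing of $k^{2}$ at that stationary point. The conormal hypothesis on $h$ is exactly what is needed to run integration by parts without blow-up at $k=0$, where $h$ itself is bounded but its derivatives may be singular like $|k|^{-\ell}$.

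For \eqref{E:error1} the plan is to split $\int_0^\infty = \int_0^1 + \int_1^\infty$ and treat the two pieces separately. On $[0,1]$ I would rescale by $k=u/\sqrt t$, giving
\[
\int_0^1 e^{\pm itk^2/2}h(k,q)f(k)\,dk = \frac{1}{\sqrt t}\int_0^{\sqrt t} e^{\pm iu^2/2}h(u/\sqrt t,q)f(u/\sqrt t)\,du,
\]
and then show that the inner integral is bounded uniformly in $t$ by $\sum_{j\le 1}\|f^{(j)}\|_\infty$. The subinterval $[0,1]$ is bounded trivially; on $[1,\sqrt t]$ I would integrate by parts once via $e^{\pm iu^2/2}=(\pm iu)^{-1}\partial_u e^{\pm iu^2/2}$. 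The conormal bound $|h'(k)|\le C/|k|$ on $(0,1]$ turns into $|h'(u/\sqrt t)|\le C\sqrt t/u$, which combines with the factor $1/\sqrt t$ from the chain rule to produce an $L^1$-integrable tail $1/u^{2}$, while the boundary contributions at $u=1$ and $u=\sqrt t$ are uniformly bounded. On $[1,\infty)$ the function $h$ has uniformly bounded derivatives of all orders, so I would integrate by parts twice with the same identity; the first IBP produces an interior term $\int_1^\infty e^{\pm itk^2/2}(hf)'/(itk)\,dk$ that is not absolutely convergent, but a second IBP turns it into an $O(t^{-2})$ term using $\|(hf)''\|_\infty\lesssim \|f\|_\infty+\|f'\|_\infty+\|f''\|_\infty$. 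Summing, both pieces are $O(t^{-1/2}\sum_{j=0}^{2}\|f^{(j)}\|_\infty)$, uniformly in $q$ since the conormal constants are $q$-independent.

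For \eqref{E:error2} the plan is to exploit the algebraic identity
\[
k^{2}e^{\pm itk^{2}/2}=\frac{1}{(\pm it)^{2}}\bigl(\partial_{k}^{2}e^{\pm itk^{2}/2}\mp it\,e^{\pm itk^{2}/2}\bigr),
\]
which follows immediately from $\partial_k e^{\pm itk^2/2}=\pm itk\,e^{\pm itk^2/2}$. The second term reduces directly to \eqref{E:error1} multiplied by $t^{-1}$, giving an $O(t^{-3/2})$ contribution. The first term I would treat by integrating by parts twice, transferring both derivatives from the exponential onto $hf$; crucially the endpoint contributions at $k=0$ from $[hf\,\partial_k e^{\pm itk^2/2}]_{0}$ vanish because $\partial_k e^{\pm itk^2/2}|_{k=0}=0$, so no $k=0$-boundary term survives where $h$ could misbehave. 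The resulting interior integral is $(it)^{-2}\int_{0}^{\infty}e^{\pm itk^{2}/2}(hf)''\,dk$, which I would again bound by splitting into $[0,1]$ and $[1,\infty)$ as in the proof of \eqref{E:error1}, now using derivatives up to order four of $f$ and the conormal derivatives of $h$ up to order four.

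The main obstacle is controlling the boundary contributions at $k=\infty$ and the interior terms on $[1,\infty)$, because neither $h$ nor $f$ is assumed to decay: the integrals must be interpreted in the improper/oscillatory sense. I would handle this by a standard regularization $e^{-\epsilon k^{2}}$ in the exponent, carry through all integrations by parts uniformly in $\epsilon$, and pass to the limit $\epsilon\to 0^{+}$; the uniform bounds obtained above depend only on the stated norms of $f$ and on the conormal constants of $h$, so they survive the limit.
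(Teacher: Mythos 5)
Your argument for \eqref{E:error1} is correct and is essentially the argument in the paper: rescale so the stationary point sits at the origin on unit scale, estimate a fixed neighbourhood of it trivially, and integrate by parts away from it using that the conormal bounds \eqref{E:admissible} turn into $|\partial_s^\ell\,h(s/\sqrt t,q)|\lesssim s^{-\ell}$ after the substitution. The regularization $e^{-\epsilon k^2}$ to justify the improper integral at $k=\infty$ is a reasonable (and slightly more careful) way to phrase what the integration by parts is already doing.

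Your treatment of \eqref{E:error2}, however, has a genuine gap. After writing $k^2e^{\pm itk^2/2}=(\pm it)^{-2}\bigl(\partial_k^2e^{\pm itk^2/2}\mp it\,e^{\pm itk^2/2}\bigr)$ and integrating by parts twice on all of $[0,\infty)$, you only account for the boundary term from the \emph{first} integration by parts, $\bigl[hf\,\partial_ke^{\pm itk^2/2}\bigr]_0$, which indeed vanishes. But the \emph{second} integration by parts produces the boundary term $\bigl[e^{\pm itk^2/2}(hf)'\bigr]_{k=0}=(hf)'(0)$, and Definition \ref{d:con} only gives $|(k\partial_k)^\ell h|\leq C_\ell$ near $k=0$, i.e. $|\partial_k^\ell h(k,q)|\lesssim |k|^{-\ell}$; so $h'(0)$, hence $(hf)'(0)$, need not exist. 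Worse, the resulting interior integrand $(hf)''$ contains $h''f\sim k^{-2}$ near $k=0$ and is not locally integrable there, so it cannot be "bounded as in the proof of \eqref{E:error1}," where the integrand was the bounded function $hf$ itself. (This is not a removable technicality: the relevant $h$ in Lemma \ref{L:asymp} are functions like $k/(k-iq)$, whose second derivative genuinely blows up like $k^{-2}$ as $q\to 0$.) The repair is to never let a derivative land on $h$ at the stationary point: substitute $s=k\sqrt t$ first, so that $k^2\,dk=t^{-3/2}s^2\,ds$ already carries the full $t^{-3/2}$, bound the piece $|s|\lesssim 1$ trivially by $\|h\|_{L^\infty}\|f\|_{L^\infty}$, and apply $(-is^{-1}\partial_s)^4e^{\frac12 is^2}=e^{\frac12 is^2}$ with four integrations by parts only on the region $s\gtrsim 1$, where the rescaled conormal bounds $|\partial_s^\ell[h(s/\sqrt t,q)]|\lesssim s^{-\ell}$ are harmless; this is how the paper proceeds, and it is why four derivatives of $f$ (not two) appear on the right of \eqref{E:error2}. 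Equivalently, in your unrescaled formulation you would have to stop the integration by parts at $k\sim t^{-1/2}$ and estimate $\int_0^{t^{-1/2}}k^2|hf|\,dk\lesssim t^{-3/2}$ directly, keeping track of the boundary terms at $k=t^{-1/2}$.
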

\begin{proof}
We begin with \eqref{E:error1}.
Let $s=k\sqrt{t}$.  Then the integral to be estimated takes the form
$$\frac{1}{\sqrt t} \int_0^\infty e^{\frac12 is^2} h\Big(\frac{s}{\sqrt t}\Big) f\Big(\frac{s}{\sqrt t} \Big) \, ds$$
Let $ \chi( s )  $ satisfy
\begin{equation}
\label{eq:chi}
\chi \in C_{\rm{c}}^\infty ( (-1 , 1 ) )\,, \ \ 
 \text{ $\chi $  is equal to $ 1 $ in a neighbourhood of $ s = 0 $.}
\end{equation}
Clearly,
$$\left| \frac{1}{\sqrt t} \int_0^\infty \chi(s) e^{\frac12 its^2} h\Big(\frac{s}{\sqrt t}\Big) f\Big(\frac{s}{\sqrt t} \Big) \, ds \right| \lesssim \frac{\|h\|_{L^\infty}\|f\|_{L^\infty}}{\sqrt t} \lesssim \frac{\|f\|_{L^\infty}}{\sqrt t}$$
and therefore we just need to estimate
\begin{equation}
\label{E:s-large}
\frac{1}{\sqrt t} \int_0^\infty (1-\chi(s)) e^{\frac12 is^2} h\Big(\frac{s}{\sqrt t}\Big) f\Big(\frac{s}{\sqrt t} \Big) \, ds
\end{equation}
Using that $(-is^{-1}\partial_s)^2 e^{\frac12is^2} = e^{\frac12is^2}$ and two applications of integration by parts gives 
$$\frac{1}{\sqrt t} \int_0^\infty e^{\frac12 i s^2} (-i\partial_s \, s^{-1})^2 \Big[ (1-\chi(s))h\Big( \frac{s}{\sqrt t}\Big) f\Big(\frac{s}{\sqrt t}\Big) \Big] \, ds$$
Distributing the derivatives and estimating (using the $s^{-2}$ factor to carry out the integration), we obtain the bound
$$ \left( \sum_{0\leq \ell \leq 2} \frac{\|h^{(\ell)}\|_{L^\infty(|k|\geq t^{-1/2})}}{t^{\ell/2}} \right)  \left( \sum_{0\leq \ell \leq 2} \frac{\|f^{(\ell)}\|_{L^\infty(|k|\geq t^{-1/2})}}{t^{\ell/2}} \right) \,.$$  
Now we just apply \eqref{E:admissible} to obtain the bound \eqref{E:error1}.

Now we establish \eqref{E:error2}.  Let $s=k\sqrt t$ to obtain
$$\frac{1}{t^{3/2}} \int_0^\infty e^{\frac12 is^2} s^2 h \Big( \frac{s}{\sqrt t} \Big) f_j \Big( \frac{s}{\sqrt t}\Big) \, ds$$
The remainder of the proof is similar to that above, except that we need to use $(-is^{-1}\partial_s)^4 e^{\frac12is^2} = e^{\frac12is^2}$ and four applications of integration by parts.
\end{proof}

We shall need the following properties of the scattering coefficients $A$, $B$, $C$, and $D$, obtained from the more precise asymptotics in Appendix \ref{AA}.

\begin{lemma}[Properties of $A$, $B$, $C$, $D$]
\label{L:asymp} 
$1/B(k)$ and $C(k)/B(k)$ are conormal, and in fact
\[ \begin{split}
& \frac{1}{B(k)} = \frac{k}{k-iq} + q \alpha_1 ( k , q )   + k^2 
\alpha_2 ( k , q )  \,,\\ 
& \frac{C(k)}{B(k)} = \frac{iq}{k-iq} +  
 q \alpha_3 ( k , q )   + k^2 
\alpha_4 ( k , q )   \,, \end{split} \]
where $  \alpha_j \in {\mathcal A} $ are conormal 
in the sense of Definition \ref{d:con}.
Also, 
\[ \frac{A( k ) }{ B ( k ) } = 
  q \beta_1 ( k , q )  \,, \ \ 
\frac{D ( k ) }{ B ( k ) } = 
q \beta_2 ( k , q ) \,, \ \ \beta_j \in {\mathcal A} \,. \]
\end{lemma}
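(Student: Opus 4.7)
The plan is to derive all four identities by direct calculation from the explicit formulas for the scattering coefficients $A(k,q), B(k,q), C(k,q), D(k,q)$ produced by Cramer's rule on the $4\times 4$ system in Appendix~\ref{AA}. That system encodes the two continuity conditions and the two derivative-jump conditions $\psi'(0+) - \psi'(0-) = -2q\sigma_3 \psi(0)$ for the ansatz \eqref{eq:psi0}, so that each of $A, B, C, D$ is a rational expression in $k$, $q$, $\mu(k) = (k^2+2)^{1/2}$, and $\theta(q) = \tanh^{-1} q$, and in particular is analytic in $(k, q)$ near $(0, 0)$ except on the zero set of the determinant of the matching matrix.

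First I would record the $q = 0$ baseline: when $q$ vanishes, $\theta = 0$ and the matching system is solved by the Kaup state $\Psi_+$ itself, yielding $B(k,0) = 1$ and $A(k,0) = C(k,0) = D(k,0) = 0$. Consequently Taylor's theorem in $q$ gives $A(k,q) = qA_1(k,q)$, $C(k,q) = qC_1(k,q)$, $D(k,q) = qD_1(k,q)$ with $A_1, C_1, D_1$ smooth in $(k,q)$ near the origin. This immediately supplies the bounds $A/B = q\beta_1$, $D/B = q\beta_2$ once $1/B$ is shown to be smooth and bounded away from the pole.

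Next I would locate the singular part of $1/B$. By Lemma~\ref{l:reig}, $H_q$ has an eigenvalue at $1-q^2$ for $q < 0$, and the ansatz \eqref{eq:psi0} matches the eigenfunction precisely when $B(k,q)$ has a simple zero at $k = iq$; this zero extends analytically in $q$ through the threshold resonance at $k = 0$ for $q = 0$. Combined with the baseline, this factorizes as
\[
B(k,q) = \frac{k-iq}{k}\,\hat B(k,q),
\]
with $\hat B$ analytic in $(k,q)$ near $(0,0)$ and $\hat B(0,0) = 1$. Taylor-expanding $1/\hat B = 1 + q\,r(k,q) + k^2\,s(k,q)$ (the absence of a pure linear-in-$k$ term at $q = 0$ follows from $B(k,0) \equiv 1$), I would write
\[
\frac{1}{B(k,q)} = \frac{k}{k-iq} + \frac{qk\,r(k,q)}{k-iq} + \frac{k^3\,s(k,q)}{k-iq},
\]
and then collapse each remainder using the elementary identities
\[
\frac{k}{k-iq} = 1 + \frac{iq}{k-iq}, \qquad \frac{k^2}{k-iq} = (k+iq) - \frac{q^2}{k-iq},
\]
iteratively, until the residual denominators $(k-iq)^{-1}$ appear only multiplying factors that vanish to matching order at $k = iq$. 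The resulting remainder splits cleanly as $q\alpha_1(k,q) + k^2 \alpha_2(k,q)$ with smooth coefficients. The analogous computation for $C/B$ uses $C(k,0) = 0$ and $C(iq,q) \neq 0$ generically to produce the principal part $iq/(k-iq)$ and a remainder of the same form.

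The main obstacle will be verifying the conormal estimates of Definition~\ref{d:con} uniformly in $q$. For $|k| \geq 1$ the bounds on $\partial_k^\ell \alpha_j, \partial_k^\ell \beta_j$ follow directly from the explicit large-$k$ asymptotics of the matching determinant in Appendix~\ref{AA}, as the factor $(k-iq)^{-1}$ is harmless there. The delicate range is $|k| \leq 1$, where one must confirm that every surviving $(k-iq)^{-1}$ in the expansion is multiplied by a factor that vanishes to sufficient order at $k = iq$ \emph{uniformly in $q$}, so that the $(k\partial_k)^\ell$ estimates in the conormal norm reduce to bounds on derivatives of smooth functions of $(k,q)$ on a fixed neighbourhood of $(0,0)$. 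Once this cancellation is established, the estimates for $A/B$ and $D/B$ follow because the $q$ prefactor from $A = qA_1, D = qD_1$ absorbs the only remaining obstruction, namely the pole of $1/B$ that approaches the origin at rate $q$.
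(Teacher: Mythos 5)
Your overall strategy---solve the $4\times 4$ matching system explicitly and factor out the singular behaviour near $k=0$---is exactly what the paper does: Appendix \ref{AA} records the closed-form expressions for $A/B$, $1/B$, $C/B$, $D/B$, whose denominators contain the single small factor $(k-iq)$ times two factors $v(k)$, $w(k)$ that are nonvanishing for $k\in\RR$ and $|q|\ll1$ (their roots sit near $k=\mp i$), and whose numerators carry the explicit prefactors $k$, $-iq$, and $kq$ from which the lemma is read off; the large-$k$ bounds come from the Neumann-series inversion of ${\mathcal A}={\mathcal A}_0+q{\mathcal B}$ in \S\ref{blk}. Your identification of the zero of $B$ at $k=iq$ via Lemma \ref{l:reig} and the factorization $B=\frac{k-iq}{k}\hat B$ are both correct and consistent with the exact formulas.

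There is, however, a genuine gap in your baseline step. You invoke ``Taylor's theorem in $q$'' to conclude that $A$, $C$, $D$ equal $q$ times functions \emph{smooth in $(k,q)$ near the origin}. This fails: the matching matrix ${\mathcal A}_0(k)$ degenerates at $k=0$ (its inverse carries $\pm i/k$ entries --- this is precisely the threshold resonance of $H_0$), so Cramer's rule does not produce jointly smooth coefficients there. Indeed the exact formula gives $C(k,q)\sim iq/k$ near $(0,0)$, which is not $q$ times anything smooth, and $B$ itself behaves like $(k-iq)/k$ rather than $1+q\cdot(\text{smooth})$. If $C$ really were $qC_1$ with $C_1$ smooth, your own factorization of $B$ would yield $C/B=\frac{qk\,C_1}{(k-iq)\hat B}$, which vanishes at $k=0$; but the lemma's principal part $\frac{iq}{k-iq}$ equals $-1$ at $k=0$, and the exact formulas confirm $C(0)/B(0)=-1+{\mathcal O}(q)$. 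So your route would produce the wrong principal part for $C/B$. The repair is to track the simple pole of ${\mathcal A}_0(k)^{-1}$ at $k=0$ through \emph{both} $C$ and $B$ (each acquires a $1/k$), and observe that the poles cancel in the ratio, leaving $\frac{iq}{k-iq}$; this is exactly what the displayed formulas for $1/B$ and $C/B$ in Appendix \ref{AA} make manifest. Your treatment of $A/B$ and $D/B$ survives because those numerators happen to contain the factor $kq$, and your uniform conormal estimates for $|k|\le 1$ (boundedness of $(k\partial_k)^\ell$ applied to $\frac{k}{k-iq}$ and $\frac{q}{k-iq}$, using $|k-iq|^2=k^2+q^2\ge 2|k||q|$) are fine once the correct principal parts are in hand.
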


With these preliminaries out of the way, 
we can now prove Lemmas \ref{l:f1} and \ref{l:f1b}.

\begin{proof}[Proof of Lemma \ref{l:f1}]
We shall give the proof for
\begin{equation}
\label{E:a0f}
\int_0^\infty e^{-\frac12itk^2} a(0,k) \, f(k) \, dk \,.
\end{equation}
The other integrals in the statement of the lemma are treated similarly.  By 
Lemma \ref{L:asymp}, we see that in the expression \eqref{E:fksplit}, all coefficients of $f_1$, $f_2$ are conormal.  Also by Lemma \ref{L:asymp} and \eqref{E:a0}, we see that 
$$a(0,k) =  q a_1 ( k, q  ) + k^2 a_2 ( k , q )  \,, \ \  a_j \in {\mathcal A} 
\,.$$
By the alegbra property of the conormal class, \eqref{E:error1},\eqref{E:error2}, and Lemma \ref{L:f-bounds}, we obtain that \eqref{E:a0f} is of size $\mathcal{O}(q^2/t^{1/2})+\mathcal{O}(q/t^{3/2})$. 
\end{proof}

\begin{proof}[Proof of Lemma \ref{l:f1b}] 

We write $\approx$ to mean that the two quantities are equal with an error of the form $q$ times conormal or $k^2$ times conormal.  By Lemma \ref{L:asymp},
$$\frac{1}{B(-k)} \approx \frac{k}{k+iq} \,, \quad \frac{1+C(k)}{B(k)} \approx \frac{k+iq}{k-iq} \,, \quad \frac{1}{(1+k^2)^2} \approx 1 \,.$$ 
We also take the expansion in Lemma \ref{L:f-bounds}:
\begin{gather*}
f_1(k) = f_1(0) + kf'(0) + k^2g(k) \,,\\
f_1(-k) = f_1(0) - kf'(0) + k^2g(-k) \,.
\end{gather*}
Substituting the above into \eqref{E:mainexp} and appealing to \eqref{E:error1},\eqref{E:error2} and Lemma \ref{L:f-bounds} for the error terms, we see that \eqref{E:mainexp} is equal to 
$$
\begin{aligned}
\int_0^\infty e^{\frac12itk^2}\frac{k}{2(k-iq)} \left( \frac{k+iq}{k-iq}(f_1(0)+kf'(0))+ (f_1(0)-kf_1'(0)) \right) \, dk  &\\
 + \mathcal{O}\left( \frac{q^2}{t^{1/2}}\right)+ \mathcal{O}\left( \frac{q}{t^{3/2}}\right)  &\,.
\end{aligned}$$
This simplifies to
\begin{equation}
\label{E:mainexp2}
\begin{aligned}
f_1(0) \int_0^\infty e^{\frac12 it  k^2} \frac{k^2}{k^2+q^2} \, dk + iqf_1'(0) \int_0^\infty e^{\frac12 itk^2}\frac{k^2}{k^2+q^2}\, dk & \\
+ \mathcal{O}\left( \frac{q^2}{t^{1/2}}\right) + \mathcal{O}\left( \frac{q}{t^{3/2}}\right) & \,.
\end{aligned}
\end{equation}
Note that
$$\int_0^\infty e^{\frac12 i s^2} \frac{s^2}{s^2+\delta^2} \, ds = \int_0^\infty e^{\frac12 i s^2} \, ds - \delta \int_0^\infty e^{\frac12 i \delta^2 s^2} \frac{1}{s^2+1} \, ds \,,$$
where, in the second term, we made the substitution $s\mapsto \delta s$.  Thus,
$$\int_0^\infty e^{\frac12 i s^2} \frac{s^2}{s^2+\delta^2} \, ds = \sqrt \frac{\pi}{2}e^{i\pi/4} + \mathcal{O}(\delta) \,.$$
In \eqref{E:mainexp2}, make the substitution $s=t^{1/2}k$ and appeal to the above formula to obtain
\begin{align*}
&\frac{f_1(0) + \mathcal{O}(q^2)}{\sqrt t} \left( \sqrt \frac{\pi}{2}e^{i\pi/4} + \mathcal{O}(q\sqrt t)\right) + \mathcal{O}\left( \frac{q^2}{t^{1/2}}\right) + \mathcal{O}\left( \frac{q}{t^{3/2}}\right) \\
&\quad = \frac12 t^{-1/2} e^{i\pi/4}\int_0^\infty w_0(x) \,dx  + \mathcal{O}(q^2) + \mathcal{O}\left( \frac{q}{t^{3/2}}\right) \,.
\end{align*}
\end{proof}

\subsection{Proof of Theorem \ref{T:main}}
\label{ptm}
We will now combine Theorem \ref{T:npt} with the results of this section to 
proof Theorem \ref{T:main}.  We start with the following lemma 

\begin{lemma}
\label{l:w0l}
Suppose that $ w_0 $ satisfies the assumptions of Theorem \ref{T:main} 
and that 
\[ \lambda_0 \defeq 1 + \int_\RR w_0 ( x ) v_1 ( x ) dx \]
is the nonlinear eigenvalue specified in Theorem \ref{T:main}. 
Then 
for the projection $ P_{\lambda_0} $ defined by \eqref{eq:proj} 
(with $ q $ supressed in the subscript),
\begin{equation}
\label{eq:lpl}
  P_{\lambda_0 } ( v_{\lambda_0} - v_1 - w_0 ) = {\mathcal O} ( q^2 ) \,,
\end{equation}
and consequently the solution, $ \lambda $, to 
$ P_\lambda ( v_\lambda - v_1 - w_0 ) = 0 $ satisfies
\begin{equation}
\label{eq:lpl1} 
 \lambda - \lambda_0  = {\mathcal O} ( q^2 ) \,.
\end{equation}
\end{lemma}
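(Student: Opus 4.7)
My plan is to exploit the reality and evenness of all objects involved to reduce $P_{\lambda_0}$ applied to a real-valued argument to a single scalar integral, and then Taylor-expand carefully in the small parameter $\lambda_0 - 1 = \mathcal{O}(q)$. The key identity that makes everything cancel to the required order is the derivative of the mass formula $\|v_{\lambda,q}\|_{L^2}^2 = 2(\lambda - q)$, which gives $\int v_\lambda \partial_\lambda v_\lambda\,dx = 1$ for every $\lambda, q$.

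First I would observe that if $\varphi$ is real then $\omega(\varphi, \partial_\lambda v_{\lambda,q}) = 0$ (symplectic pairing of real functions) and $\omega(\varphi, iv_{\lambda,q}) = -\int \varphi\, v_{\lambda,q}\,dx$, so the definition \eqref{eq:proj} collapses to
\begin{equation*}
P_{\lambda,q}\varphi = \Bigl(\int_\RR \varphi\, v_{\lambda,q}\,dx\Bigr)\,\partial_\lambda v_{\lambda,q}.
\end{equation*}
Applied to $\varphi = v_{\lambda_0} - v_1 - w_0$, which is real and even, \eqref{eq:lpl} reduces to the scalar estimate
\begin{equation*}
G(\lambda_0) \defeq \int_\RR (v_{\lambda_0} - v_1 - w_0)\, v_{\lambda_0}\, dx = \mathcal{O}(q^2).
\end{equation*}

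To bound $G(\lambda_0)$, I would set $\epsilon \defeq \lambda_0 - 1 = \int w_0 v_1\,dx$, which is $\mathcal{O}(q)$ by \eqref{eq:t00}. A first-order Taylor expansion $v_{\lambda_0} = v_1 + \epsilon\, v_4 + \mathcal{O}_{L^2}(\epsilon^2)$ with $v_4 = \partial_\lambda v_\lambda\vert_{\lambda=1}$, combined with the key identity $\int v_1 v_4\,dx = 1$, yields $\int(v_{\lambda_0} - v_1) v_{\lambda_0}\,dx = \epsilon + \mathcal{O}(\epsilon^2)$. For the other piece,
\begin{equation*}
\int w_0 v_{\lambda_0}\,dx = \int w_0 v_1\,dx + \int w_0 (v_{\lambda_0} - v_1)\,dx = \epsilon + \mathcal{O}(q^2),
\end{equation*}
where the last error uses $\|w_0\|_{L^\infty} = \mathcal{O}(q)$ together with $\|v_{\lambda_0} - v_1\|_{L^1} = \mathcal{O}(q)$ (from $\epsilon = \mathcal{O}(q)$ and the exponential decay of $\partial_\lambda v_\lambda$). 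Subtracting, the two $\epsilon$'s cancel by design, leaving $G(\lambda_0) = \mathcal{O}(q^2)$ and hence \eqref{eq:lpl}.

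For \eqref{eq:lpl1}, the defining equation $P_\lambda(v_\lambda - v_1 - w_0) = 0$ from Proposition \ref{p:eigen-select} reduces, by the same simplification, to $G(\lambda) = 0$. Differentiating under the integral gives $G'(\lambda) = 1 + \int(v_\lambda - v_1)\partial_\lambda v_\lambda\,dx - \int w_0\, \partial_\lambda v_\lambda\,dx = 1 + \mathcal{O}(q)$ uniformly for $\lambda$ in a neighborhood of $1$, so the mean value theorem yields $|\lambda - \lambda_0| \leq 2\,|G(\lambda_0)| = \mathcal{O}(q^2)$. The only real care required is the bookkeeping in the Taylor expansions: the definition $\lambda_0 = 1 + \int w_0 v_1\,dx$ is arranged precisely so that the leading $\mathcal{O}(q)$ contributions in the two pieces of $G(\lambda_0)$ cancel, which is the conceptual content of the lemma and the reason the choice of $\lambda_0$ in Theorem \ref{T:main} is the correct first approximation to the nonlinear eigenvalue.
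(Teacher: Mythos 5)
Your proposal is correct and follows essentially the same route as the paper: reduce $P_{\lambda_0}$ on a real argument to the scalar integral $\int(v_{\lambda_0}-v_1-w_0)v_{\lambda_0}\,dx$, Taylor-expand in $\lambda_0-1=\mathcal{O}(q)$, and close the cancellation with the identity $\int v_1\,\partial_\lambda v_\lambda|_{\lambda=1}\,dx=\tfrac12\partial_\lambda\|v_\lambda\|_{L^2}^2|_{\lambda=1}=1$. Your mean-value-theorem step for \eqref{eq:lpl1} is just the quantitative form of the implicit function theorem invoked in the paper, so the arguments match.
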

\begin{proof}
The definition \eqref{eq:proj} means that we need to show
that $ \omega ( v_{\lambda_0 } - v_1 - w_0, i v_{\lambda_0}  ) = {\mathcal O} ( q^2) $ since the other term 
vanishes by the reality of $ w_0 $. Now, using the definition of $ \lambda_0 $
and the fact that  $\lambda_0 = 1 + {\mathcal O} ( q ) $, we see that
\[ \begin{split}
 \omega ( v_{\lambda_0 } - v_1 - w_0, i v_{\lambda_0 } ) & = 
2 ( \lambda_0 - q ) - \int v_{\lambda_0 } ( x ) v_1 ( x ) dx - 
\int w_0 ( x ) v_{\lambda_0 } ( x) dx \\
& = 2 ( \lambda_0 - q ) - \int v_{\lambda_0 } ( x ) v_1 ( x ) dx  - 
\int w_0 ( x ) v_1 ( x ) dx + {\mathcal O} ( q^2) \\
& = \int w_0 ( x ) v_1 ( x ) - ( \lambda_0 - 1 ) \int \partial_\lambda(
v_\lambda )|_{\lambda = 1 } ( x ) v_1 ( x ) dx + {\mathcal O} ( q^2) \,. 
\end{split} \]
The estimate \eqref{eq:lpl} follows from 
\[ \int \partial_\lambda( v_\lambda )|_{\lambda = 1 } ( x ) v_1 ( x ) dx  = 
\frac 1 2 \partial_{\lambda} \| v_\lambda \|_{L^2}^2 |_{\lambda =1 } = 1 \,.
\] 
The comparison \eqref{eq:lpl1} 
between the exact solution and the approximate one
is obtained from the implicit function theorem as in the proof 
of Proposition \ref{p:eigen-select}. 
\end{proof}

The lemma shows that 
the assumptions of Theorem \ref{T:npt} are satisfied for $ h = C q $, 
$ \theta = 0 $ ($w_0 $ is real) and 
\[  \lambda = \lambda_0 + {\mathcal O} ( q^2 ) \,.\]
We can then apply Corollary \ref{C:npt} to obtain
\begin{equation}
\label{eq:recall} \|  u (t) - e^{ it\lambda_0^2 /2 } \Big( v_{\lambda_0 }
+ e^{-it\mathcal{L}_{\lambda_0,q}}w_0 \Big) \|_{H_x^1} 
\leq C (1+t)^2 q^2 \,,  
\end{equation}
for all $0\leq t\ll  q^{-1/2}$. We now write 
\[ \begin{split}
  e^{-it\mathcal{L}_{\lambda_0,q}}w_0 & = \begin{bmatrix} 1 & 0 \end{bmatrix}
e^{-\frac 12 i t H_{\lambda_0, q } } 
\begin{bmatrix} w_0 \\ w_0 \end{bmatrix} \\
& = \begin{bmatrix} 1 & 0 \end{bmatrix} e^{-\frac 12 i t H_{\lambda_0, q } } 
P_d \begin{bmatrix} w_0 \\ w_0 \end{bmatrix} +
 \begin{bmatrix} 1 & 0 \end{bmatrix} e^{-\frac 12 i t H_{\lambda_0, q } } 
P_c \begin{bmatrix} w_0 \\ w_0 \end{bmatrix} \,. 
\end{split} \]
The first conclusion of Theorem \ref{T:main} given in \eqref{eq:t0} 
is immediate from \eqref{eq:recall} and Proposition \ref{p:w0q} once we show 
that
\begin{equation}
\label{eq:Pd}
 \begin{bmatrix} 1 & 0 \end{bmatrix} e^{-\frac 12 i t H_{\lambda_0, q } } 
P_d \begin{bmatrix} w_0 \\ w_0 \end{bmatrix}  = {\mathcal O}_{H^1}
( q^{3/2} ) \,, \ \ 0 \leq t \ll q^{-1/2}  \,. 
\end{equation}
For $ q > 0 $ we have six contributions to the discrete spectrum, while for 
$ q < 0 $ there are four. By Lemma \ref{l:B} the non-zero eigenvalues 
in the neighbourhood of zero do not contribute as they are odd while $ w_0 $
is even. The contribution of the zero eigenvalues is $ {\mathcal O} ( q^2 t ) $
by the same arguments as in \S \ref{npt}. For $ q < 0 $ the coefficients of
the eigenfuctions (which are uniformly bounded in $ H^1 $) are estimated 
using Lemma \ref{l:reig} by 
\[   C q^{1/2} \int_\RR |w_0 ( x ) | e^{-q|x|} \, dx \leq C' q^{3/2} \,. \]
Hence \eqref{eq:Pd} holds and in view of \eqref{eq:recall} we have established 
\eqref{eq:t0}.

To obtain \eqref{eq:t3} we use the above estimate and Proposition \ref{p:as}.
The combined error term for $ 0 \ll t \ll q^{-1/2} $ is 
\[   {\mathcal O} \left( \frac {|q| } {t^{3/2} }  + |q|^{3/2} + q^2 t^2 
\right) \,, \]
and that is bounded by $ C |q| / t^{3/2} $  for $ t \leq C' |q|^{-2/7} $.


\appendix

\section{A derivation of Kaup's basis using MATLAB}
\label{ndkp}

The Kaup spectral decomposition of the linearized operator
was based on the connection with the Zakharov-Shabat system
and the complete integrability of the cubic NLS, see \cite{Kaup76} 
and \cite{Ya}. We rediscovered the structure of his 
basis of solutions through a numerical experiment and it might
be of interest to indicate how that was done. The original 
motivation was to show that 
the threshold resonances for the linearization of 
the cubic nonlinear Schr\"odinger equation (NLS) on the line are 
{\em simple} which can be done by an explicit construction of a 
solution to a system of ODEs. 

The explicit resonant state of the linearized operator $ H_0 
$ at $ 1 $ is given by 
\begin{equation}
\label{eq:R1} u_1 = \begin{bmatrix} 1 - \sech^2 x \\
 - \sech^2 x  \end{bmatrix} \,.
\end{equation}
To show that it is simple, 
we need to show that any other bounded solution is a multiple of $ u_1 $.

As in standard scattering theory, the four independent solutions of 
$ H u = u /2 $ can be characterized by their behaviour as $ x \rightarrow 
\infty $ -- see the proof of \cite[Lemma 5.19]{KS}. In particular
the resonant states can only be given as linear combinations of the two 
solutions, $ u_1 $ and $ u_2 $, satisfying
\begin{equation}
\label{eq:u}
  u_1 = \begin{bmatrix} 1 \\ 0 \end{bmatrix}  + {\mathcal O} ( e^{-2 x } ) 
\,, \ \ 
e^{ \sqrt 2 x } u_2 = \begin{bmatrix} 0 \\ 1  \end{bmatrix} 
 + {\mathcal O} ( e^{-2 x } ) \,, \ \ 
x \longrightarrow +\infty \, . 
\end{equation}
We see that $ u_1 $ is given by \eqref{eq:R1} and 
$ u_1 \in L^\infty $. 
 Once we show that $ u_2 \notin L^\infty $ 
we will see that the multiplicity of the resonance is one.

As we have already seen in \S \ref{sa} the solution $ u_2 $ can 
be written explicitly. An elementary calculation confirms that 
\begin{equation}
\label{eq:U2}  u_2 = \frac{ \exp ( - \sqrt 2 x ) } 
{ ( 1 +  \sqrt 2 )^2} \begin{bmatrix}  - \sech^2 x \\
(  \tanh x + \sqrt 2 )^2  \end{bmatrix} \,. 
\end{equation}
This shows that 
\[ u_2 = \exp ( - \sqrt 2 x ) \frac
{ (-1 + \sqrt 2)^2 } { (1 +  \sqrt 2)^2  } \left( 
\begin{bmatrix} 0  \\
1 \end{bmatrix} + {\mathcal O} ( e^{-2 |x| } ) \right) \,, \ \ 
x \longrightarrow - \infty \,,  \]
and, in particular, that $ u_2 \notin L^\infty $.

The 
exact expression \eqref{eq:U2} was arrived at through an attempt to 
produce a computer assisted proof of the fact that $ u_2 \notin L^\infty $.

The solution $ u_2  ( x ) $ is obtained by solving the following
Volterra integral equation (see \cite[(5.4)]{KS}, where one 
should let $ \lambda \rightarrow 0 $ and renormalize following 
$ \partial_x^2 \mapsto \partial_x^2/2 $):
\[ u_2 ( x ) = e^{-\sqrt 2 x } \begin{bmatrix} 0 \\ 1 \end{bmatrix}
-  {\sqrt{2}} \int_x^\infty \begin{bmatrix} 2\sqrt 2 ( y - x ) & \sqrt 2 (y - x) \\
\sinh( \sqrt 2 ( y - x ) ) & 2 \sinh ( \sqrt 2 ( y - x ) ) 
\end{bmatrix} \sech^2 y \, u_2 ( y ) dy \,. \]
Solving this equation by iteration could in principle show that the 
solution is unbounded. 

Elimination of the exponential growth in the integral equation is
helpful for the theoretical estimates of \cite{BP} and \cite{KS} and 
seems essential for a succesful numerical scheme. Thus we 
consider
\[ v ( x ) \defeq \exp ( \sqrt 2 x ) u_2 ( x ) \,, \]
which solves
\begin{gather}
\label{eq:V}
\begin{gathered}
 v ( x ) =  \begin{bmatrix} 0 \\ 1 \end{bmatrix} +  {\mathcal K} v  ( x ) \,, \ \ 
{\mathcal K} v ( x ) \defeq
\int_x^\infty 
K ( x , y ) v ( y ) dy \,, \\ 
K ( x , y ) \defeq
- \sqrt 2  \begin{bmatrix} 2\sqrt 2 ( y - x ) & \sqrt 2 (y - x) \\
\sinh( \sqrt 2 ( y - x ) ) & 2 \sinh ( \sqrt 2 ( y - x ) ) 
\end{bmatrix} \sech^2 y \, \exp ( \sqrt 2 ( x - y ) ) \,. 
\end{gathered}
\end{gather}
It is not hard to see the convergence of 
\begin{equation}
\label{eq:Kn}  v ( x) = \sum_{n=0}^\infty  {\mathcal K}^n 
\left( \begin{bmatrix} 0 \\ 1 \end{bmatrix} \right) ( x) \,, 
\end{equation}
in, say $ C^k ( \RR ) $, for any $ k $. Hence showing that 
$ v ( x ) \not \rightarrow 0 $, $ x \rightarrow -\infty $ is 
in principle possible by a numerical computation. 

We easily implement the operator $ {\mathcal K } $ in 
MATLAB. The input is an array 
which is a discretised $\RR^2$-valued function on $ [-10, 30 ] $ with $ N $ 
grid points. Because of the Volterra structure of the equation the 
left limit, $ -10 $, is not important. The right cutoff, $ 30 $, 
is chosen large enough to make the effect of the potential 
negligible. The integrals are computed using the built-in
trapezium rule and the errors can be estimates. We used $ N = 10^4 $
which would have to be even larger for rigorous estimates, while
experimentally it was clearly an ``overkill''.

{\tt function KT = KT(u)

mu =sqrt(2);

[M,N]=size(u);

x = linspace(-10,30,N);

for j=1:N-1

y = linspace(x(j),30,N-j+1); v = sech(y).*sech(y);

u1=u(:,[j:N]);

uu(1,:)=v.*(y-x(j)).*(2*u1(1,:)+u1(2,:));

uu(2,:)=v.*(sinh(mu*(y-x(j)))/mu).*(u1(1,:)+2*u1(2,:));

uu(1,:)=-4*exp(mu*(x(j)-y)).*uu(1,:);

uu(2,:)=-4*exp(mu*(x(j)-y)).*uu(2,:);

KT(1,j)=trapz(y,uu(1,:));

KT(2,j)=trapz(y,uu(2,:));

clear uu

end

KT(:,N)=[0;0];}

When the numerical solution obtained using \eqref{eq:Kn} with $ n = 10 $ 
was plotted (see Fig.\ref{f:3}) we noticed that the plot of the first
component looked remarkably like a plot of $ - \alpha \sech^2 x $, 
$ \alpha > 0 $ and the fit based on the minimum of first component
(experimental $ - \alpha $) was almost exact. From the operator $ H_0 $ it
is clear that having one component of the solution we obtain the other
and that quickly led to the exact solution \eqref{eq:U2}.  This 
then suggests  the form of general solution for other values of $ k $ and 
$ \mu $ as given in \S \ref{sa}.

It would be difficult in general, and by our
method in particular, to show the existence of a resonance. 


\begin{figure}
\begin{center}
\includegraphics[width=6in]{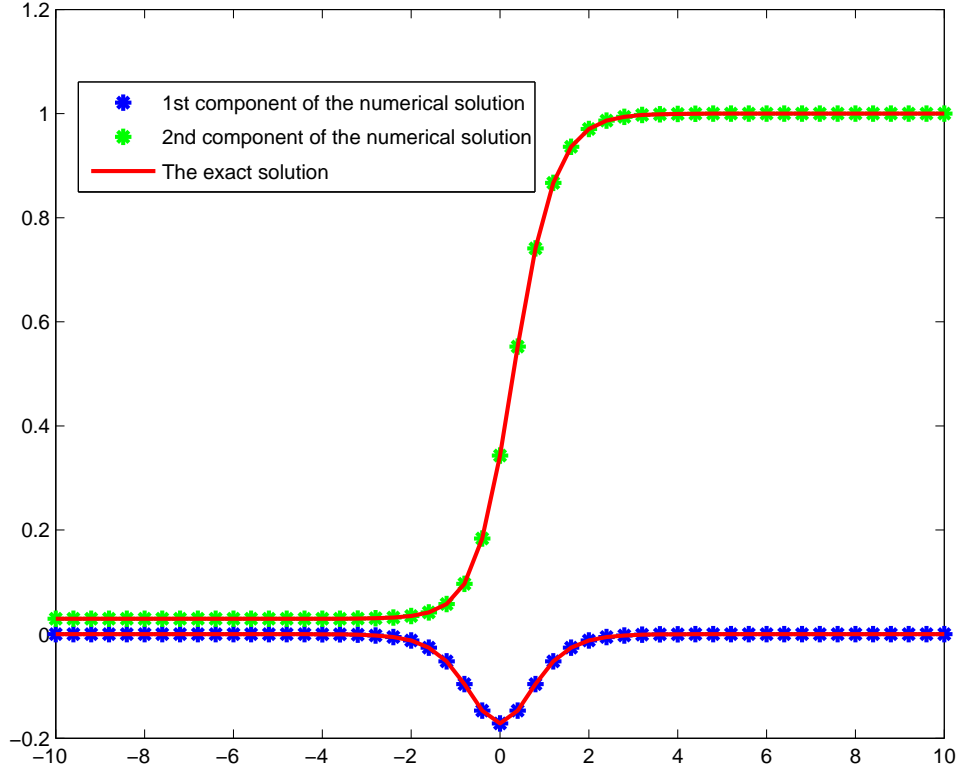}
\end{center}
\caption{The plots of components of $ \exp ( \sqrt 2 x ) u_2 ( x ) $,
following the numerical computation and the exact solutions. We
see that $ \lim_{x\rightarrow -\infty } \exp ( \sqrt 2 x ) u^2_2 ( x ) = 
( 3 - 2 \sqrt 2 ) / ( 3 + 2 \sqrt 2 ) \neq 0 $, where $ u_2^2 $ is the 
second component of the solution. We used $ N = 10^4 $ grid point and 
the plot shows the sampling of $ 100 $ points.}
\label{f:3}
\end{figure}

\section{Perturbation of eigenvalues at zero energy}
\label{S:pnz}

Here we present the perturbation theory for $ H_q - z $ at $ z = 0 $. 
Even though we could in principle obtain the same results from careful analysis
of the matrix $ \mathcal{A} ( k, q ) $ described in detail in Appendix \ref{AA},
the method used here is more general and does not depend on explicit
formul{\ae}. It is of course close to the similar study in the semiclassical 
case, see \cite{GaSi} and references given there. However, since the delta
function is clearly different from a slowly varying potential with a nondegenerate minimum we give a selfcontained argument.

\subsection{Grushin problem}

We recall that the linearized operator acting on 
$$ \begin{bmatrix} \Re w \\ \Im w \end{bmatrix} \in \RR^2 \subset \CC^2  \,, 
$$ is
given by 
\begin{equation}
\label{E:Rq}
F_q \defeq \begin{bmatrix} 0 & 1 \\ -1 & 0 \end{bmatrix}
\begin{bmatrix} L_{q+} & 0 \\ 0 & L_{q-} \end{bmatrix} \, , \qquad
\begin{aligned}
& L_{q+} = \tfrac12(1-\partial_x^2 - 6v^2 -2q\delta_0) \\
& L_{q-} = \tfrac12(1-\partial_x^2 - 2v^2 -2q\delta_0)  \,, 
\end{aligned}
\end{equation}
where $ v $ is the nonlinear ground state.
We take elements of $H^2(\mathbb{R};\mathbb{C})$ and write them as column 
vectors of real and imaginary parts giving an identification
$$H^2(\mathbb{R};\mathbb{C}) \simeq
H^2(\mathbb{R};\mathbb{R})\oplus H^2(\mathbb{R};\mathbb{R})$$
The elements $e_j\eta$ take the 2-vector form
$$e_1\eta = \begin{bmatrix} -\eta \\ 0 \end{bmatrix} , \quad e_2\eta 
= \begin{bmatrix} 0 \\ x\eta \end{bmatrix}, \quad e_3\eta 
= \begin{bmatrix} 0 \\ \eta \end{bmatrix}, \quad e_4\eta 
= \begin{bmatrix} \eta + x\eta' \\ 0 \end{bmatrix}$$
The symplectic form, in vector notation, becomes
\begin{equation}
\label{E:omega}
\omega\left( \begin{bmatrix} u_1 \\ u_2 \end{bmatrix}, \; 
\begin{bmatrix} v_1 \\ v_2 \end{bmatrix} \right) = \int (-u_1v_2+v_1u_2) 
\end{equation}
In the matrix notation, the relations \eqref{E:kernel1}
become 
$$\begin{bmatrix} 0 & L_- \\ -L_+ & 0 \end{bmatrix} \begin{bmatrix} -\eta' \\ 
0 \end{bmatrix} = 0, \qquad 
\begin{bmatrix} 0 & L_- \\ -L_+ & 0 \end{bmatrix} \begin{bmatrix} 0 \\ 
x\eta \end{bmatrix} = \begin{bmatrix} -\eta' \\ 0 \end{bmatrix}
$$
$$\begin{bmatrix} 0 & L_- \\ -L_+ & 0 \end{bmatrix} \begin{bmatrix} 0 \\ 
\eta \end{bmatrix} = 0, \qquad 
\begin{bmatrix} 0 & L_- \\ -L_+ & 0 \end{bmatrix} 
\begin{bmatrix} x\eta'+\eta \\0 \end{bmatrix} = \begin{bmatrix} 0 \\ 
\eta \end{bmatrix}
$$

To perform spectral analysis, we complexify the space and work on
$$H \defeq H^2(\mathbb{R};\mathbb{C})\oplus H^2(\mathbb{R};\mathbb{C})$$
The symplectic form $\omega$ \eqref{E:omega} extends to $H$, by analytic 
continuation (with exactly the same expression as in \eqref{E:omega}; we do 
not insert any complex conjugations).

Following the standard procedure (see \cite{SZ}) we build an {\em invertible}
matrix in block form
$$\mathcal{G}_q = \begin{bmatrix} F_q-z & R_- \\ R_+ & 0 \end{bmatrix}$$
with suitably chosen
\begin{align*}
R_- : \mathbb{C}^2 \to H \,, \ \ R_+ : H \to \mathbb{C}^2\,. 
\end{align*}
We will select $R_-$, $R_+$ to be constant (independent of $q$ and $z$) 
operators such that $\mathcal{G}$ is invertible with inverse represented in 
block form as
$$\mathcal{G}^{-1} = \begin{bmatrix} E & E_+ \\ E_- & E_{-+} \end{bmatrix}$$
The components depend on $q$ and $z$ and have the following mapping properties:
\begin{align*}
& E: H \to H \,, 
&& E_-: H \to \mathbb{C}^2 \,, \\
& E_+: \mathbb{C}^2 \to H \,, 
&& E_{-+}: \mathbb{C}^2 \to \mathbb{C}^2\,.
\end{align*}
To find $ R_-$ and $R_+$ and to compute $E^0(z)$, 
$E_+^0(z)$, $E_-^0(z)$ and $E_{-+}^0(z)$, we first consider 
$ (F_0  - z ) |_{ {\mathfrak g} \cdot \eta } $, that is $ F_0 - z $ 
acting on the generalized kernel.
Ordering the basis using $ e_j\cdot \eta $, 
$ j = 1, 2, 3, 4 $, we see that 
\[ (F_0  - z ) |_{ {\mathfrak g} \cdot \eta }  = 
\begin{bmatrix}
-z  &  1  &    &     \\
 0  & -z  &    &     \\
    &     & -z &  1  \\
    &     &  0 & -z  
\end{bmatrix}\,. \]
The computation (see \cite[\S 2.2]{SZ})
$$
\begin{bmatrix}
-z  &  1  &    &    & 0 & 0 \\
 0  & -z  &    &    & 1 & 0 \\
    &     & -z &  1 & 0 & 0 \\
    &     &  0 & -z & 0 & 1 \\
1   &  0  & 0  &  0 &   &   \\
0   &  0  & 1  &  0 &   & 
\end{bmatrix}^{-1} =
\begin{bmatrix}
0   & 0   &    &    & 1  & 0 \\
1   & 0   &    &    & z  & 0 \\
    &     &  0 & 0  & 0  & 1 \\
    &     &  1 &  0 & 0  & z \\
z   &  1  & 0  &  0 & z^2&   \\
0   &  0  & z  &  1 &    & z^2
\end{bmatrix}
$$
gives us $ R_\pm $ for which $ {\mathcal G}_0 $ is invertible:
\[ R_- \begin{bmatrix}  \zeta_1 \\ \zeta_2 \end{bmatrix} 
  = \zeta_1 e_2 \eta + \zeta_2 e_4 \eta \,, \ \
R_+ u = \begin{bmatrix} P_1  u  \\ P_3 u \end{bmatrix} \,, \ \  P u = \sum_{j=1}^4 P_j u \, e_j \eta \,. \]

This tells us that 
$$E_+^0 \begin{bmatrix} \zeta_1 \\ \zeta_2 \end{bmatrix} 
= \begin{bmatrix} \zeta_1 \\ z\zeta_1 \\ \zeta_2 \\ z\zeta_2 \end{bmatrix} 
= \zeta_1 e_1 \eta + z\zeta_1 e_2 \eta + \zeta_2e_3\eta + z\zeta_2 e_4\eta$$
or, more explicitly,
\begin{equation}
\label{E:E+}
E_+^0 \begin{bmatrix} \zeta_1 \\ \zeta_2 \end{bmatrix} 
= \begin{bmatrix} -\eta' & z(\eta+x\eta') \\ zx\eta & \eta \end{bmatrix} 
\begin{bmatrix} \zeta_1 \\ \zeta_2 \end{bmatrix}
\end{equation}
We also find that 
$$E_-^0 \begin{bmatrix} \alpha_1 \\ \alpha_2 \\ 
\alpha_3 \\ \alpha_4 \end{bmatrix} 
= \begin{bmatrix} z\alpha_1 + \alpha_2 \\ z\alpha_3 + \alpha_4 \end{bmatrix}$$
or in other words $E_-^0:H \to \mathbb{C}^2$ is expressed as
\begin{equation}
\label{E:E-}
E_-^0 \begin{bmatrix} u \\ v \end{bmatrix} 
= \begin{bmatrix} -z\int ux\eta -\int v\eta' \\ 
z\int v(x\eta)' - \int u\eta \end{bmatrix}
\end{equation}
We use the following formula to compute $E_{-+}^q(z)$:
$$E_{-+}^q = E_{-+}^0 - E_-^0(F_q-F_0)E_+^0 + \mathcal{O}(q^2)$$
By the Schur complement formula, $F_q-z$ is invertible if and only if 
$E_{-+}^q(z)$ is 
invertible, so we want to find $z$ (in terms of $q$) such that 
$\det(E_{-+}^q(z)) =0$.  We know that $E_{-+}^0:\mathbb{C}^2\to \mathbb{C}^2$ 
is
$$E_{-+}^0 = \begin{bmatrix} z^2 & 0 \\ 0 & z^2 \end{bmatrix}$$
and thus have all the ingredients to analyze the perturbation.

\subsection{Substitutions}
We have
$$\sech^2(|x|+q) = \sech^2x -2q\sech^2x\tanh|x| + \cdots$$
and thus (to first order in $q$)
\begin{align*}
L_+^q - L_+^0 = 6q\sech^2x\tanh|x| -q\delta_0(x)\\
L_-^q - L_-^0 = 2q\sech^2x\tanh|x| -q\delta_0(x)
\end{align*}
and therefore (to first order in $q$),
$$F_q-F_0 = \begin{bmatrix} 0 & 2q\sech^2x\tanh|x| -q\delta_0(x) \\ 
-6q\sech^2x\tanh|x| +q\delta_0(x) & 0 \end{bmatrix} \,. $$

We will use the notation $\eta=\sech x$ and $\sigma =\tanh x$.  
Using \eqref{E:E+} we see that $(F_q-F_0)E_+^0: \mathbb{C}^2 \to H$ takes the form
$$(F_q-F_0)E_+^0 = \begin{bmatrix}
2qz x\eta^3\sigma\sgn x  & 2q\eta^3\sigma \sgn x- q\delta_0 \\
-6q\eta^3\sigma^2\sgn x & -6qz\eta^3\sigma(1-x\sigma)\sgn x + qz\delta_0 
\end{bmatrix}
$$
From this, and \eqref{E:E-}, 
we compute $E_-^0(F_q-F_0)E_+^0:\mathbb{C}^2 \to \mathbb{C}^2$ 
takes the form
$$E_-^0(F_q-F_0)E_+^0 = \begin{bmatrix}
-qz^2\alpha -q\beta  & 0 \\
0  & q\gamma + qz^2\delta
\end{bmatrix}
$$
where 
\begin{align*}
\beta = 6\int \eta^4\sigma^3\sgn x = 1  \,, \ \ 
\gamma = 1-2 \int \eta^4\sigma \sgn x =0 \,, 
\end{align*}
and thus
\begin{equation}
\label{eq:Eq}
E_{-+}^q(z) = \begin{bmatrix} (1+q\alpha)z^2 + q & 0 \\ 
0 & (1-q\delta)z^2 \end{bmatrix} + \mathcal{O}(q^2) \,.
\end{equation}
By expanding $ \det E_{-+}^q ( x ) $ as see that $ E_{-+}^q $ 
fails to be invertible when 
$ z = \pm i q^{1/2}+ {\mathcal O} ( q^{3/2} ) $. The explicit generalized 
kernel of $ F_q $ given in the beginning of \S \ref{npt} shows that the
double eigenvalue at $ 0 $ persists under perturbation.
We can now give 

\medskip

\noindent
{\em Proof of Lemma \ref{l:B}:}
We only need to check the properties of $ w_q^\pm $. The 
equation $ \sigma_3 w_{q}^{\pm } = w_{q}^{\mp} $ follows from the fact that
$ \sigma_3 F_q \sigma_3 = -F_q $. Since the eigenfuctions are simple
and $ F_q $ commutes with $ u ( x) \mapsto u ( -x ) $, and because of the 
$ \sigma_3 $ symmetry, they are either
both odd or both even. Schur's formula (see for instance \cite[\S 1]{SZ}) 
shows that
\begin{equation}
\label{eq:Bres} \Res_{z = \lambda_q^\pm } ( F_q - z ) ^{-1} = 
\Res_{z = \lambda_q^\pm } E_+^q ( z ) E_{-+}^q ( z ) ^{-1} E_{-}^q ( z ) \,, 
\end{equation}
and we note that 
\begin{equation}
\label{eq:BIm} \CC \cdot w_q^\pm =  {\rm Image} \, \Res_{z = \lambda_q^\pm } ( F_q - z ) ^{-1} \,.
\end{equation}
In addition to \eqref{eq:Eq} we also have, using \eqref{E:E+} and \eqref{E:E-}, 
\[ \begin{split} 
E_+^q ( z)\begin{bmatrix} \zeta_1 \\ \zeta_2 \end{bmatrix}  & = \begin{bmatrix} -\eta' & z(\eta+x\eta') \\ zx\eta & \eta \end{bmatrix}  \begin{bmatrix} \zeta_1 \\ \zeta_2 \end{bmatrix}
 + {\mathcal O}_H  ( q | \zeta |_{ \CC^2} ) \,, \\
E_-^q(z) \begin{bmatrix} u_1 \\ u_2 \end{bmatrix} 
& = \begin{bmatrix} -z\int u_1x\eta -\int u_2 \eta' \\ 
z\int u_2 (x\eta)' - \int u_1 \eta \end{bmatrix} + {\mathcal O}_{\CC^2}  \left( q  
\left \| u\right \|_{H} \right) 
\,. 
\end{split}\] 
This, \eqref{eq:Bres}, and 
\eqref{eq:BIm}  show that 
\[ w_q^\pm =  \begin{bmatrix}   \eta' \\ 
\pm i q^{1/2} x \eta \end{bmatrix} + {\mathcal O}_{H}  \left( q  \right)  \,. \]
Hence $ w_q^\pm $ is approximately odd, and consequently odd.
\stopthm


\section{The system of equations for $A$, $B$, $C$, $D$}
\label{AA}

Here we describe how to solve for the coefficents $A(k)$, $B(k)$, $C(k)$, and $D(k)$ in \eqref {eq:psi0}.  Define
$$\begin{bmatrix} f(x,k) \\ g(x,k) \end{bmatrix} \defeq \psi(x,k)$$
Set $\tilde A = e^{(ik-\mu)\theta}A$, $\tilde B = e^{2ik\theta}B$, $\tilde C=C$, and $\tilde D = e^{(ik-\mu)\theta}D$, 
$ \theta = \tanh^{-1} q $.
  Denote $f(0\pm) = \lim_{x \rightarrow 0\pm } f(x)$, etc.
Using that $s(0)=(1-q^2)^{1/2}$ and $t(\pm 0)=\pm q$, we obtain
\begin{align*}
e^{ik\theta} f(0-) &= (q+ik)^2 - \tilde A (1-q^2) \\
e^{ik\theta} f(0+) &= \tilde B(q-ik)^2 + \tilde C(q+ik)^2-\tilde D (1-q^2) \\
e^{ik\theta} g(0-) &=-(1-q^2)+\tilde A (q+\mu)^2 \\
e^{ik\theta} g(0+) &=-\tilde B(1-q^2)-\tilde C(1-q^2)+\tilde D (q+\mu)^2
\end{align*}

The two equations we obtain by requiring continuity at $x=0$ are
\begin{equation}
\label{E:compat1}
f(0)\defeq f(0-)=f(0+), \quad \text{and}\quad g(0)\defeq g(0-)=g(0+) \,.
\end{equation}

We further compute, from the formula for $\psi$, that
\begin{align*}
e^{ik\theta} f'(0-) & = (q+ik)^2 ik - 2(1-q^2)(q+ik) + \tilde A \Big(-(1-q^2)\mu -2(1-q^2)q\Big) \\
e^{ik\theta} f'(0+) & = \tilde B\Big( (q-ik)^2 ik + 2(1-q^2)(q-ik)\Big) \\
& \ \ \ + \, \tilde C\Big( -(q+ik)^2 ik+2(1-q^2)(q+ik) \Big) 
\\
& \ \ \ + \, \tilde D \Big( \mu(1-q^2) + 2(1-q^2)q\Big) \\
e^{ik\theta} g'(0-) & =\Big( -2(1-q^2)q-(1-q^2)ik \Big) + 
\tilde A \Big( -2(q+\mu)(1-q^2)+\mu(q+\mu)^2\Big) \\
e^{ik\theta} g'(0+) & = \tilde B \Big( 2(1-q^2)q - (1-q^2)ik \Big) + 
\tilde C\Big( 2(1-q^2)q+(1-q^2)ik\Big) \\
 & \ \ \ + \, \tilde D \Big( 2(1-q^2)(q+\mu) - \mu(q+\mu)^2\Big)
\end{align*}

The form of the derivative compatibility conditions is:
\begin{equation}
\label{E:compat2}
f'(0-)-f'(0+) = 2qf(0) \,, \ \ \
g'(0-)-g'(0+) = 2qg(0) \,. 
\end{equation}

The four equations \eqref{E:compat1},\eqref{E:compat2} give rise to the 4$\times$4 system 
\begin{equation}
 {\mathcal A} ( k, q )  
\begin{bmatrix}
\tilde A  \\ \tilde B-1 \\ \tilde C \\ \tilde D  
\end{bmatrix}
= q \begin{bmatrix}
4ik  \\  0 \\ -2(\mu-q)   \\ - 2(1-q^2)  \end{bmatrix} \; ,
\end{equation}
with the coefficient matrix $\mathcal{A}(k,q)$ 
given by
$$
\begin{bmatrix}
1-q^2 & (q-ik)^2 & (q+ik)^2 & -(1-q^2) \\
(q+\mu)^2 & 1-q^2 & 1-q^2 & -(q+\mu)^2 \\
(1-q^2) & (q-ik)(\mu-q) & (q+ik)(\mu-q) & (1-q^2) \\
-(q+\mu)(k^2+q^2) & (1-q^2)(q-ik) & (1-q^2)(q+ik) & -(q+\mu)(k^2+q^2)
\end{bmatrix} \,.
$$

\subsection{Exact solutions}
The solution is obtained from {\tt Mathematica} or, 
in principle, by Gaussian elimination.  Recalling that $\theta=\tanh^{-1}q$, we have

\begin{align*}
A&=-\frac{2 e^{(-i k+\mu ) \theta} q (i k+q) \left(-1+q^2\right)}{1+q^2 \left(-2+k^2+2 q^2\right)+2 q \left(k^2+q^2\right) \mu +\left(k^2+q^2\right) \mu ^2}\\
B&=\frac{e^{-2 i k \theta} (k-i q) (i+k (q+\mu )-i q (2 q+\mu )) (k (q+\mu )-i (1+q \mu ))}{k \left(1+q^2 \left(-2+k^2+2 q^2\right)+2 q \left(k^2+q^2\right) \mu +\left(k^2+q^2\right) \mu ^2\right)} \\
C&=-\frac{i q \left(-1+\left(2+k^2\right) q^2+2 q \left(k^2+q^2\right) \mu +\left(k^2+q^2\right) \mu ^2\right)}{k \left(1+q^2 \left(-2+k^2+2 q^2\right)+2 q \left(k^2+q^2\right) \mu +\left(k^2+q^2\right) \mu ^2\right)}\\
D&=\frac{2 e^{(-i k+\mu ) \theta} q (i k+q) \left(-1+q^2\right)}{1+q^2 \left(-2+k^2+2 q^2\right)+2 q \left(k^2+q^2\right) \mu +\left(k^2+q^2\right) \mu ^2}
\end{align*}

The numerator in the expression for $B$ is $(k-iq)v(k)w(k)$, where
$$v(k) = (i+k\mu) + q(k-i\mu) - 2iq^2, \quad 
w(k)=(-i+k\mu)+q(k-i\mu) \,.$$
We clearly see that $k=iq$ is a root of $B$, and we further find that at $k=iq$,
$$A(iq) = 0, \quad B(iq)=0, \quad C(iq)=1,\quad D(iq)=0 \,.$$
Thus,
$$\psi(x) = \begin{bmatrix}
(\tanh(|x|+\theta)-q)^2 \\
-\sech^2(|x|+\theta)
\end{bmatrix}
e^{q(|x|+\theta)}$$
solves the equation
$$H_q \psi = (1-q^2)\psi \,,$$
giving an eigenvalue when $q<0$.

We will now specify a branch of $\mu=\sqrt{2+k^2}$, and study the roots of $v(k)$ and $w(k)$ to check for consistency with Appendix \ref{S:pnz}.  Since $2+k^2$ has roots at $\pm i\sqrt 2$, we will cut along the imaginary axis, and take $\mu$ as the branch defined on the domain
$$\mathbb{C} \backslash \; (-i\infty, -i\sqrt 2] \cup [i\sqrt 2,+i\infty) \,,$$
that is real and positive for $k>0$.

We now examine $v(k)$ for $0<|q|\ll 1$.  Setting $k=-i+\kappa q^{1/2}$, we find that $\mu = 1-i\kappa q^{1/2} +\kappa^2q + \mathcal{O}(q^{3/2})$.  Substituting yields
$$v(k) = -2i(\kappa^2+1)q + \mathcal{O}(q^{3/2})\, ,$$
and thus a root occurs at $\kappa = \pm i$, i.e. when $k=-i\pm iq^{1/2}+ \mathcal{O}(q)$.  Substituting $k=-i\pm iq^{1/2}$ into the numerator of the formula for $B$, we obtain $\mathcal{O}(q^{3/2})$, while substituting into the denominator, we obtain $\mathcal{O}(q)$, and thus we have found an approximate root of $B$.  This implies that we have eigenvalues at $1+k^2= \pm 2 q^{1/2}+\mathcal{O}(q)$.  The roots of $w(k)$ occur near $k=+i$, giving nonphysical poles of the resolvent $(H_q-(k^2+1))^{-1}$.

From the above formulas, we have

\begin{align*}
\frac{A}{B} &= -\frac{2 i e^{(i k + u) \theta}k q \left(-1+q^2\right)}{(i+k (q+\mu )-i q (2 q+\mu )) 
(k (q+\mu )-i (1+q \mu ))}\\
\frac{1}{B} &= \frac{e^{2 i k \theta} k \left(1+q^2 \left(-2+k^2+2 q^2\right)+2 q 
\left(k^2+q^2\right) 
\mu +\left(k^2+q^2\right) \mu ^2\right)}{(k-i q) (i+k (q+\mu )-i q (2 q+\mu )) (k 
(q+\mu )-i (1+q \mu ))} \\
\frac{C}{B} &= -\frac{i  e^{2 i k \theta}  q \left(-1+\left(2+k^2\right) q^2+2 q 
\left(k^2+q^2\right) 
\mu +\left(k^2+q^2\right) \mu ^2\right)}{(k-i q) (i+k (q+\mu )-i q (2 q+\mu )) (k 
(q+\mu )-i (1+q \mu ))}\\
\frac{D}{B}&=\frac{2 i e^{(i k + u) \theta}   k q \left(-1+q^2\right)}{(i+k (q+\mu )-i q (2 q+\mu )) 
(k (q+\mu )-i (1+q \mu ))}
\end{align*}

\subsection{Behaviour for large $ k $.} 
\label{blk}
The 
behaviour for large values of $ k $ 
could be deduced from general principles of scattering theory. Here
we proceed directly using the matrix $ {\mathcal A} ( k , q ) $ which 
we write as 
$ {\mathcal A} ( k, q ) = {\mathcal A}_0(k) + q {\mathcal B}(k,q) $, where
\[ {\mathcal A}_0 (k)= 
\begin{bmatrix}
1 & -k^2 &  - k^2 & -1 \\
\mu^2 & 1 & 1 & - \mu^2  \\
1 &  - i k \mu & i k \mu & 1 \\
- \mu k^2 & -ik & ik &  -\mu k^2
\end{bmatrix}\,,\]
and 
\[    {\mathcal A}_0^{-1} =  \frac{1}{2 ( 1 + k^2)^2} \begin{bmatrix}
1 & k^2 & 1 & - \mu \\
-\mu^2 &  1 & ik\mu & i/k \\
- \mu^2 & 1 & -i k \mu &  -i/k \\
-1 &  -k^2 &  1 & - \mu \end{bmatrix} \,, 
\]
$ \mu = \sqrt{2+k^2} $.
For $ | k | > \epsilon > 0 $, we have 
\[  {\mathcal A}_0^{-1} 
=  {\mathcal O}_{\CC^4 \rightarrow \CC^4} ( 1/\la k\ra ^2 )\,, \  \ 
 \mathcal B = 
 {\mathcal O}_{ \CC^4 \rightarrow \CC^4 } ( \la k \ra^2 ) \,, \]
with the implicit constant in the first estimate dependent on 
$ \epsilon $.  Hence
\[  q {\mathcal A} 
_0^{-1}  {\mathcal B} = 
 {\mathcal O}_{\CC^4 \rightarrow \CC^4} ( q ) \,.  \]
For $ q $ small enough, 
depending of $\epsilon$, we can used the Neumann series inversion of 
$  I + q {\mathcal A}_0^{-1} \mathcal B $ to obtain, 
and consequently, for $ |k | > \epsilon $, 
\begin{equation}
\label{eq:1}  \begin{bmatrix} 
A \\ B-1 \\ C \\ D 
\end{bmatrix} = 
q (I+q\mathcal{A}_0^{-1}\mathcal{B})^{-1}\mathcal{A}_0^{-1}
\begin{bmatrix}
4ik  \\  0 \\ -2\mu   \\ - 6  \end{bmatrix} 
= q \mathcal{A}_0^{-1}
\begin{bmatrix}
4ik  \\  0 \\ -2\mu   \\ - 6  \end{bmatrix} 
+\mathcal{O}(q^2/\langle k \rangle ) \,. 
\end{equation}

\begin{equation}
\label{eq:largek} \begin{bmatrix} 
A\\ B-1 \\ C \\ D 
\end{bmatrix} = 
 \frac{q}{ ( 1 + k^2)^2} 
\begin{bmatrix}
2 i k +  2 \mu \\
-3 i/k - 3 i k\mu^2  \\
3i/k-ik\mu^2 \\ 
-2 i k + 2 \mu  \end{bmatrix}
+ \mathcal{O}({q^2}/{\la k \ra})
\end{equation}
This provides the estimates needed in Lemma \ref{l:fkl}.

\end{document}